\documentclass[onefignum,onetabnum]{siamart220329}
\usepackage{bm}
\usepackage{algorithmic}
\usepackage{algorithm}
\usepackage{mathrsfs}
\usepackage{epsfig}
\usepackage{enumitem}
\usepackage{diagbox}
\usepackage{cases}
\usepackage{mathtools} 
\usepackage{tikz} 

\usepackage{stmaryrd}

\usepackage{xparse}

\NewDocumentCommand{\dgal}{sO{}m}{%
  \IfBooleanTF{#1}
    {\dgalext{#3}}
    {\dgalx[#2]{#3}}%
}

\NewDocumentCommand{\dgalext}{m}{%
  \sbox0{%
    \mathsurround=0pt 
    $\left\{\vphantom{#1}\right.\kern-\nulldelimiterspace$%
  }%
  \sbox2{\{}%
  \ifdim\ht0=\ht2
    \{\kern-.45\wd2 \{#1\}\kern-.45\wd2 \}%
  \else
  \fi
}

\NewDocumentCommand{\dgalx}{om}{%
  \sbox0{\mathsurround=0pt$#1\{$}%
  \sbox2{\{}%
  \ifdim\ht0=\ht2
    \{\kern-.45\wd2 \{#2\}\kern-.45\wd2 \}%
  \else
    \mathopen{#1\{\kern-.5\wd0 #1\{}
    #2
    \mathclose{#1\}\kern-.5\wd0 #1\}}
  \fi
}

\newcommand{\vertiii}[1]{{\left\vert\kern-0.25ex\left\vert\kern-0.25ex\left\vert #1 
\right\vert\kern-0.25ex\right\vert\kern-0.25ex\right\vert}}


\usepackage{lipsum}
\usepackage{amsfonts}
\usepackage{graphicx}
\usepackage{epstopdf}
\usepackage{algorithmic}

\usepackage{animate}
\usepackage{amsmath}
\usepackage{amssymb}
\usepackage{graphics}
\usepackage{graphicx}
\usepackage{footnote}
\usepackage{textcomp}
\usepackage{mathrsfs}
\usepackage{epstopdf}
\usepackage{array}
\usepackage[maxfloats=99]{morefloats}
\usepackage{url}
\usepackage{cases}
\usepackage{mathscinet}
\usepackage[normalem]{ulem}
\usepackage{algorithmic, algorithm}
\usepackage{color}
\usepackage{cite}

\ifpdf
  \DeclareGraphicsExtensions{.eps,.pdf,.png,.jpg}
\else
  \DeclareGraphicsExtensions{.eps}
\fi


\newsiamremark{remark}{Remark}
\newsiamremark{hypothesis}{Hypothesis}
\crefname{hypothesis}{Hypothesis}{Hypotheses}
\newsiamthm{claim}{Claim}

\headers{unfitted spectral element methods}{}

\title{Unfitted Spectral Element Method for interfacial models}

\author{Nicolas Gonzalez\thanks{Department of Mathematics, University of California, Santa Barbara, CA, 93106, USA(\email{nicogonzalez@math.ucsb.edu}).}\and 
Hailong Guo\thanks{ School of Mathematics and Statistics,  The University of Melbourne,  Parkville, VIC 3010, Australia   
  (\email{hailong.guo@unimelb.edu.au}).}
  \and Xu Yang\thanks{Department of Mathematics, University of California, Santa Barbara, CA, 93106, USA(\email{xuyang@math.ucsb.edu}).}
}

\usepackage{amsopn}



\usepackage{subcaption}

\usepackage{multirow}

\newtheorem{assumption}[theorem]{Assumption}
\graphicspath{{fig/}{fig/Eigen/}{fig/Poisson/}}

\usepackage{stmaryrd}



\begin{document}

\maketitle

\begin{abstract}
In this paper, we propose an unfitted spectral element method for solving elliptic interface and corresponding eigenvalue problems. The novelty of the proposed method lies in its combination of the spectral accuracy of the spectral element method and the flexibility of the unfitted Nitsche's method. We also use tailored ghost penalty terms to enhance its robustness. We establish optimal $hp$ convergence rates for both elliptic interface problems and interface eigenvalue problems. Additionally, we demonstrate spectral accuracy for model problems in terms of polynomial degree.

\end{abstract}

\begin{keywords}
Elliptic interface problem, interface eigenvalue problem, unfitted Nitsche's method, $hp$ estimate, ghost penalty
  \end{keywords}

\begin{AMS}
 65N30, 65N25, 65N15.  
\end{AMS}

\section{Introduction}
\label{sec:int}

Interface problems arise naturally in various physical systems characterized by different background materials, with extensive applications in diverse fields such as fluid mechanics and materials science. The primary challenge for interface problems is the low regularity of the solution across the interface.
In the pioneering investigation of the finite element method for interface problems \cite{Babuska1970}, Babu\v{s}ka established that standard finite element methods can only achieve $\mathcal{O}(h^{1/2})$ accuracy unless the meshes conform precisely to the interface. To date, various methods have been developed to address interface problems. The existing numerical methods can be roughly categorized into two different classes: body-fitted mesh methods and unfitted  methods. When the meshes are fitted to the interface, optimal a priori error estimates are established \cite{ChenZou1998, Xu1982, Babuska1970}. To alleviate the need for body-fitted mesh generation for geometrically complicated interfaces, various unfitted numerical methods have been developed since the seminal work of Peskin on the immersed boundary method \cite{Peskin1977}. Famous examples include immersed interface methods \cite{LL1994}, immersed finite element methods \cite{Li1998a, LiLinWu2003}, ghost fluid method \cite{Liu2000}, Petrov-Galerkin methods \cite{HouLiu2005, HouWuZhang2004}, generalized/extended finite element methods \cite{Moes2001, BaOs1983}, and cut finite element methods \cite{Hansbo2002, Burman2015}.

The cut finite element method (CutFEM), also known as the unfitted Nitsche's method, was initially introduced by Hansbo et al. \cite{Hansbo2002}. The key idea behind this approach involves employing two distinct sets of basis functions on the interface elements. These sets of basis functions are weakly coupled using Nitsche's methods. Notably, this idea has been generalized to address various model equations, including elastic interface problems \cite{Hansbo2004}, Stokes interface problems \cite{Hansbo2014}, Maxwell interface problems \cite{LiuZhang2Zheng2020}, and biharmonic interface problems \cite{CaiChenWang2021}.
In our recent research \cite{GY20181}, we have established superconvergence results for the cut finite element method. Moreover, high-order analogues of this method have been developed \cite{ChenLiXiang2021, WuXiao2019, Lehr2016, GMassing2019, BadiaNe2022b}. To enhance the robustness of the method in the presence of arbitrarily small intersections between geometric and numerical meshes, innovative techniques such as the ghost penalty method \cite{Burman2012, Burman2010} and the cell aggregation technique \cite{BadiaNe2022, BadiaVer2018} have been proposed.
For readers interested in a comprehensive overview of CutFEM, we refer them to the review provided in \cite{Burman2015}.

The motivation for our paper stems from our recent investigation of unfitted finite element methods for interface eigenvalue problems \cite{GuoYangZ2021, GuoYangZhu2021}. These types of interface eigenvalue problems have important applications in materials sciences, particularly in band gap computations for photonic/phononic crystals and in edge model computations for topological materials. In the context of eigenvalue problems, as elucidated in \cite{Zhang2015}, higher-order numerical methods, especially spectral methods/spectral element methods, offer significantly more reliable numerical eigenvalues.

Our paper aims to introduce a novel and robust unfitted spectral element method for solving elliptic interface problems and associated interface eigenvalue problems. Unlike previous methodologies, we emphasize the utilization of nodal basis functions derived from the Legendre-Gauss-Lobatto points \cite{ShenTangWang2011} and the development of $hp$ error estimates. In pursuit of enhanced robustness, we incorporate a ghost penalty stabilization term with parameters tailored to the polynomial order $p$. Notably, for interface eigenvalue problems, both mass and stiffness terms require the inclusion of the ghost penalty. However, introducing an additional ghost penalty term in the mass term precludes the direct application of the Babu\v{s}ka-Osborne theory. To overcome this challenge, we propose a solution by introducing intermediate interface eigenvalue problems and their corresponding solution operators. Using the intermediate solution operator as a bridge, we decompose the eigenvalue approximation into two components: one that can be rigorously analyzed using the Babu\v{s}ka-Osborne theory and another that can be estimated through the operator super-approximation property.

The rest of the paper is organized as follows: In Section \ref{sec:model}, we introduce the equations for our interface models. Section \ref{sec:usem} presents the unfitted spectral element formulations of these model equations and establishes their stability. In Section \ref{sec:error}, we conduct \emph{a priori} error estimates. Section \ref{sec:num} includes several numerical examples that are consistent with the theoretical results. Finally, we make conclusive remarks in Section \ref{sec:con}.

\section{Model interface problems} 
\label{sec:model}

For simplicity's sake, we assume $\Omega \subset \mathbb R^2$ is a rectangular domain. We adopt the standard Sobolev spaces notation as in \cite{Evans2008, Ciarlet2002, BrennerScott2008}. For any subset $D \subseteq \Omega$, let $H^k(D)$ denote the Sobolev space with norm $\|\cdot \|_{k, D}$ and seminorm $|\cdot |_{k, D}$. For a domain $D = D_+\cup D_-$ with $D_+\cap D_- = \emptyset$, let $H^{k}\left(D_+ \cup D_-\right)$ be the function space consisting of piecewise Sobolev functions $w$ such that $\left. w\right|_{D_+} \in H^{k}\left(D_+\right)$ and $\left.w\right|_{D_-} \in H^{k}\left(D_-\right)$, whose norm is defined as
\begin{equation}
	\|w\|_{k,  D_+ \cup D_-}=\left(\|w\|_{k,  D_+}^2+\|w\|_{k, D_-}^2\right)^{1 / 2},
\end{equation}
and seminorm is defined as
\begin{equation}
	|w|_{k,  D_+ \cup D_-}=\left(|w|_{k, D_+}^2+|w|_{k,  D_-}^2\right)^{1 / 2} .
\end{equation}

Suppose there is a smooth curve $\Gamma$ separating $\Omega$ into two disjoint parts: $\Omega_+$ and $\Omega_-$ so can be decomposed as $\Omega = \Omega_- \cup \Gamma \cup \Omega_+$. We shall consider the following elliptic interface problem:
\begin{subequations}\label{equ:bvp}
\begin{alignat}{2}
	-\nabla \cdot (\alpha \nabla u)  & = f, \quad  \text{in } \Omega_- \cup \Omega_+, \\
    u & = 0, \quad \text{on } \partial\Omega, \\
    \llbracket u \rrbracket & = \bar{p}, \quad  \text{on } \Gamma, \\
    \llbracket \alpha \partial_{\mathbf{n}} u \rrbracket & = \bar{q}, \quad  \text{on } \Gamma,
\end{alignat}
\end{subequations}
where $\partial_{\mathbf{n}} u = \nabla u \cdot \mathbf{n}$ with $\mathbf{n}$ being the unit outward normal vector of $\Gamma$ from $\Omega_-$ to $\Omega_+$ and $\llbracket v \rrbracket(x) = v_+(x) - v_-(x)$ with $v_{\pm} = v|_{\Omega_{\pm}}$ being the restriction of $v$ in subdomain $\Omega_{\pm}$. The diffusion coefficient is piecewise defined as 
\begin{equation}\label{equ:coeff}
\alpha =
\left\{
\begin{array}{cc}
	\alpha_- & \text{ in } \Omega_-, \\
	\alpha_+ & \text{ in } \Omega_+. \\
\end{array}\right.
\end{equation}
We assume $\alpha_{\pm} \ge \alpha_0$ for some given $\alpha_0 > 0$.
Define the bilinear form 
\begin{equation}\label{equ:weak}
	a(v, w) = \int_{\Omega} \alpha \nabla v \cdot \nabla w \, dx.
\end{equation}

The second model equation is described as the following interface eigenvalue problem: we seek to find the eigenpair $(\lambda, u) \in (\mathbb{R}^+, H^1(\Omega))$ such that
\begin{subequations}\label{equ:eigenprob}
\begin{alignat}{2}
	-\nabla \cdot (\alpha \nabla u)  & = \lambda u, \; \text{in } \Omega_- \cup \Omega_+, \\
    u & = 0, \quad \text{on } \partial\Omega, \\
    \llbracket u \rrbracket & = 0, \quad \text{on } \Gamma, \\
    \llbracket \alpha \partial_{\mathbf{n}} u \rrbracket & = 0, \quad \text{on } \Gamma.
\end{alignat}
\end{subequations}
According to spectral theory, the interface eigenvalue problem \eqref{equ:eigenprob} possesses a countable sequence of real eigenvalues $0 < \lambda_1 \le \lambda_2 \cdots \rightarrow \infty$, along with corresponding eigenfunctions $u_1, u_2, \cdots$. These eigenfunctions can be assumed to satisfy $a(u_i, u_j) = \lambda_i(u_i, u_j) = \delta_{ij}$.

In this paper, the symbol $\mathcal{C}$, with or without a subscript, represents a generic constant, independent of mesh step size $h$, polynomial degree $p$, and interface $\Gamma$'s location. Its value may differ between instances. We shall write $x \lesssim y$ to denote the expression $x \leq \mathcal{C} y$.

\section{Unfitted spectral element method}\label{sec:usem}
In this section, we shall introduce the unfitted spectral element formulation for our model equations.

\subsection{Formulation of unfitted spectral element method}
\label{ssec:usem}
The spectral element method combines the flexibility of finite element methods with spectral accuracy, using orthogonal polynomials \cite{CHQZ2006, ShenTangWang2011}.

We denote the Legendre polynomials on the interval $[-1, 1]$ as $L_j(x)$ for $j \in \mathbb N_0$. The Lobatto polynomials are defined as
\begin{equation} \label{equ:lobatto}
	\Phi_{j+1} = \sqrt{\frac{2j+1}{2}}\int_{-1}^x L_j(t) \, dt, \quad j \ge 1. 
\end{equation}
The $(j+1)$ zeros of $\Phi_{j+1}$ are referred to as the Legendre-Gauss-Lobatto (LGL) points, denoted $\xi_0 = -1, \xi_1, \ldots, \xi_{j-1}, \xi_j = 1$. In 2D, the LGL points are the tensor product of their 1D counterpart. It's important to note that LGL nodes are non-uniformly distributed, hence minimizing Runge's phenomenon in computations.

Let $\hat{K} = [-1, 1] \times [-1, 1]$ be the reference element in 2D. We naturally fit the Lagrange interpolation polynomial of order $p$ on $\hat{K}$ in each dimension independently as follows. For some $i,j \in \{0, 1, \hdots, p\}$, 
\begin{equation} \label{equ:interp}
	\hat{\phi}_{i, j}(x, y) =  \left( \prod_{n = 0, n \neq i}^p\frac{x-\xi_n}{\xi_i - \xi_n} \right)
	 \left( \prod_{m = 0, m \neq j}^p\frac{y-\xi_m}{\xi_j - \xi_m} \right),
\end{equation}
The local spectral polynomial basis of degree $p$ on the reference element is then defined as
\begin{equation} \label{equ:localsemspace}
	\mathbb{Q}_{p}(\hat{K}) = \text{span}\{\hat{\phi}_{i, j}: 0 \le i, j \le p\}. 
\end{equation}
We can extend the above to any general rectangular element $K$, through an affine mapping $F_K:K \to \hat{K}$ i.e.
\begin{equation} \label{equ:gensemspace}
	\mathbb{Q}_{p}(K) = \text{span}\{\phi \circ F_K : \phi \in \mathbb{Q}_{p}(\hat{K}) \}.
\end{equation}

Let $\mathcal{T}_h = \{ K\}$ be a quasi-uniform rectangular partition of $\Omega$.  For any rectangle $K \in \mathcal{T}_h$, let $h_K$ represent the maximum length of $K$'s boundary edges, $h_K \coloneqq \max_{e \in \partial K} |e|$. Consequently, we define the mesh size $h$ to be the maximum of $h_K$ for all $K \in \mathcal{T}_h$, $h \coloneqq \max_{K \in \mathcal T_h} h_K$. An additional crucial aspect of the unfitted spectral element method involves categorizing mesh elements into two classes: those intersecting the interface, denoted by
\begin{equation} \label{equ:interfelems} \mathcal{T}_{\Gamma, h} \coloneqq \{K \in \mathcal{T}_h : K \cap \Gamma \neq \emptyset\}, \end{equation}
and those that do not. For any $K \in \mathcal{T}_{\Gamma,h}$, $K_{\pm} \coloneqq K \cap \Omega_{\pm}$, and $\Gamma_{K} \coloneqq \Gamma \cap K$. For $h$ sufficiently small, it is reasonable to assume $\mathcal{T}_h$ satisfies the following:
\begin{assumption}
Let $K$ be an interface element with boundary $\partial K$. $\Gamma$ intersects $\partial K$ exactly twice, and each open edge $e \in \partial K$ at most once. 
\label{ass:InterfIntersects}
\end{assumption}

Furthermore, the set of elements associated with $\Omega_{\pm}$ are
\begin{equation} \label{equ:submesh}
	\mathcal{T}_{\pm, h} \coloneqq \{ K \in \mathcal{T}_h: K \cap \Omega_\pm  \neq \emptyset \}.
\end{equation}
The fictitious domains $\Omega_{\pm, h} \supseteq \Omega_{\pm}$, are defined as
\begin{equation} \label{equ:fictdomain}
	\Omega_{\pm, h} \coloneqq \bigcup_{K \in \mathcal{T}_{\pm, h}} K. 
\end{equation}
An additional set of element faces is defined here to later on facilitate the ghost penalty bilinear form introduction. The set is 
\begin{equation} \label{equ:ghostface}
	\mathcal{G}_{\pm, h} = \left\{ \overline{K} \cap \overline{K'}:  K\neq K', K \in \mathcal{T}_{\Gamma, h} \text{ and }
	K' \in \mathcal{T}_{\pm, h} \right\}. 
\end{equation}

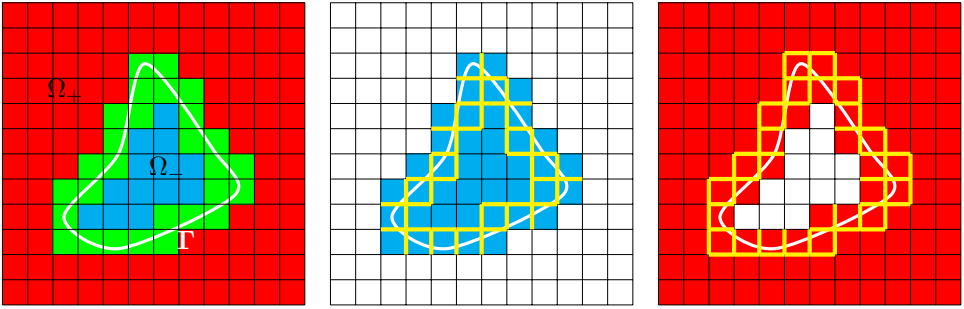
\begin{figure}[H]
    \centering
    \begin{minipage}{0.33\textwidth}
        \centering
        \begin{tikzpicture}[scale=0.67]
            \fill[red] (0,0) rectangle (0.5,0.5);
            \fill[red] (0.5,0) rectangle (1,0.5);
            \fill[red] (1,0) rectangle (1.5,0.5);
            \fill[red] (1.5,0) rectangle (2,0.5);
            \fill[red] (2,0) rectangle (2.5,0.5);
            \fill[red] (2.5,0) rectangle (3,0.5);
            \fill[red] (3,0) rectangle (3.5,0.5);
            \fill[red] (3.5,0) rectangle (4,0.5);
            \fill[red] (4,0) rectangle (4.5,0.5);
            \fill[red] (4.5,0) rectangle (5,0.5);
            \fill[red] (5,0) rectangle (5.5,0.5);
            \fill[red] (5.5,0) rectangle (6,0.5);
    
            \fill[red] (0,0.5) rectangle (0.5,1);
            \fill[red] (0.5,0.5) rectangle (1,1);
            \fill[red] (1,0.5) rectangle (1.5,1);
            \fill[red] (1.5,0.5) rectangle (2,1);
            \fill[red] (2,0.5) rectangle (2.5,1);
            \fill[red] (2.5,0.5) rectangle (3,1);
            \fill[red] (3,0.5) rectangle (3.5,1);
            \fill[red] (3.5,0.5) rectangle (4,1);
            \fill[red] (4,0.5) rectangle (4.5,1);
            \fill[red] (4.5,0.5) rectangle (5,1);
            \fill[red] (5,0.5) rectangle (5.5,1);
            \fill[red] (5.5,0.5) rectangle (6,1);
    
            \fill[red] (0,1) rectangle (0.5,1.5);
            \fill[red] (0.5,1) rectangle (1,1.5);
            \fill[green] (1,1) rectangle (1.5,1.5);
            \fill[green] (1.5,1) rectangle (2,1.5);
            \fill[green] (2,1) rectangle (2.5,1.5);
            \fill[green] (2.5,1) rectangle (3,1.5);
            \fill[green] (3,1) rectangle (3.5,1.5);
            \fill[red] (3.5,1) rectangle (4,1.5);
            \fill[red] (4,1) rectangle (4.5,1.5);
            \fill[red] (4.5,1) rectangle (5,1.5);
            \fill[red] (5,1) rectangle (5.5,1.5);
            \fill[red] (5.5,1) rectangle (6,1.5);
    
            \fill[red] (0,1.5) rectangle (0.5,2);
            \fill[red] (0.5,1.5) rectangle (1,2);
            \fill[green] (1,1.5) rectangle (1.5,2);
            \fill[cyan] (1.5,1.5) rectangle (2,2);
            \fill[cyan] (2,1.5) rectangle (2.5,2);
            \fill[cyan] (2.5,1.5) rectangle (3,2);
            \fill[green] (3,1.5) rectangle (3.5,2);
            \fill[green] (3.5,1.5) rectangle (4,2);
            \fill[green] (4,1.5) rectangle (4.5,2);
            \fill[red] (4.5,1.5) rectangle (5,2);
            \fill[red] (5,1.5) rectangle (5.5,2);
            \fill[red] (5.5,1.5) rectangle (6,2);
    
            \fill[red] (0,2) rectangle (0.5,2.5);
            \fill[red] (0.5,2) rectangle (1,2.5);
            \fill[green] (1,2) rectangle (1.5,2.5);
            \fill[green] (1.5,2) rectangle (2,2.5);
            \fill[cyan] (2,2) rectangle (2.5,2.5);
            \fill[cyan] (2.5,2) rectangle (3,2.5);
            \fill[cyan] (3,2) rectangle (3.5,2.5);
            \fill[cyan] (3.5,2) rectangle (4,2.5);
            \fill[green] (4,2) rectangle (4.5,2.5);
            \fill[green] (4.5,2) rectangle (5,2.5);
            \fill[red] (5,2) rectangle (5.5,2.5);
            \fill[red] (5.5,2) rectangle (6,2.5);
    
            \fill[red] (0,2.5) rectangle (0.5,3);
            \fill[red] (0.5,2.5) rectangle (1,3);
            \fill[red] (1,2.5) rectangle (1.5,3);
            \fill[green] (1.5,2.5) rectangle (2,3);
            \fill[green] (2,2.5) rectangle (2.5,3);
            \fill[cyan] (2.5,2.5) rectangle (3,3);
            \fill[cyan] (3,2.5) rectangle (3.5,3);
            \fill[cyan] (3.5,2.5) rectangle (4,3);
            \fill[green] (4,2.5) rectangle (4.5,3);
            \fill[green] (4.5,2.5) rectangle (5,3);
            \fill[red] (5,2.5) rectangle (5.5,3);
            \fill[red] (5.5,2.5) rectangle (6,3);
    
            \fill[red] (0,3) rectangle (0.5,3.5);
            \fill[red] (0.5,3) rectangle (1,3.5);
            \fill[red] (1,3) rectangle (1.5,3.5);
            \fill[red] (1.5,3) rectangle (2,3.5);
            \fill[green] (2,3) rectangle (2.5,3.5);
            \fill[cyan] (2.5,3) rectangle (3,3.5);
            \fill[cyan] (3,3) rectangle (3.5,3.5);
            \fill[green] (3.5,3) rectangle (4,3.5);
            \fill[green] (4,3) rectangle (4.5,3.5);
            \fill[red] (4.5,3) rectangle (5,3.5);
            \fill[red] (5,3) rectangle (5.5,3.5);
            \fill[red] (5.5,3) rectangle (6,3.5);
    
            \fill[red] (0,3.5) rectangle (0.5,4);
            \fill[red] (0.5,3.5) rectangle (1,4);
            \fill[red] (1,3.5) rectangle (1.5,4);
            \fill[red] (1.5,3.5) rectangle (2,4);
            \fill[green] (2,3.5) rectangle (2.5,4);
            \fill[green] (2.5,3.5) rectangle (3,4);
            \fill[cyan] (3,3.5) rectangle (3.5,4);
            \fill[green] (3.5,3.5) rectangle (4,4);
            \fill[red] (4,3.5) rectangle (4.5,4);
            \fill[red] (4.5,3.5) rectangle (5,4);
            \fill[red] (5,3.5) rectangle (5.5,4);
            \fill[red] (5.5,3.5) rectangle (6,4);
    
            \fill[red] (0,4) rectangle (0.5,4.5);
            \fill[red] (0.5,4) rectangle (1,4.5);
            \fill[red] (1,4) rectangle (1.5,4.5);
            \fill[red] (1.5,4) rectangle (2,4.5);
            \fill[red] (2,4) rectangle (2.5,4.5);
            \fill[green] (2.5,4) rectangle (3,4.5);
            \fill[green] (3,4) rectangle (3.5,4.5);
            \fill[green] (3.5,4) rectangle (4,4.5);
            \fill[red] (4,4) rectangle (4.5,4.5);
            \fill[red] (4.5,4) rectangle (5,4.5);
            \fill[red] (5,4) rectangle (5.5,4.5);
            \fill[red] (5.5,4) rectangle (6,4.5);
    
            \fill[red] (0,4.5) rectangle (0.5,5);
            \fill[red] (0.5,4.5) rectangle (1,5);
            \fill[red] (1,4.5) rectangle (1.5,5);
            \fill[red] (1.5,4.5) rectangle (2,5);
            \fill[red] (2,4.5) rectangle (2.5,5);
            \fill[green] (2.5,4.5) rectangle (3,5);
            \fill[green] (3,4.5) rectangle (3.5,5);
            \fill[red] (3.5,4.5) rectangle (4,5);
            \fill[red] (4,4.5) rectangle (4.5,5);
            \fill[red] (4.5,4.5) rectangle (5,5);
            \fill[red] (5,4.5) rectangle (5.5,5);
            \fill[red] (5.5,4.5) rectangle (6,5);
    
            \fill[red] (0,5) rectangle (0.5,5.5);
            \fill[red] (0.5,5) rectangle (1,5.5);
            \fill[red] (1,5) rectangle (1.5,5.5);
            \fill[red] (1.5,5) rectangle (2,5.5);
            \fill[red] (2,5) rectangle (2.5,5.5);
            \fill[red] (2.5,5) rectangle (3,5.5);
            \fill[red] (3,5) rectangle (3.5,5.5);
            \fill[red] (3.5,5) rectangle (4,5.5);
            \fill[red] (4,5) rectangle (4.5,5.5);
            \fill[red] (4.5,5) rectangle (5,5.5);
            \fill[red] (5,5) rectangle (5.5,5.5);
            \fill[red] (5.5,5) rectangle (6,5.5);
    
            \fill[red] (0,5.5) rectangle (0.5,6);
            \fill[red] (0.5,5.5) rectangle (1,6);
            \fill[red] (1,5.5) rectangle (1.5,6);
            \fill[red] (1.5,5.5) rectangle (2,6);
            \fill[red] (2,5.5) rectangle (2.5,6);
            \fill[red] (2.5,5.5) rectangle (3,6);
            \fill[red] (3,5.5) rectangle (3.5,6);
            \fill[red] (3.5,5.5) rectangle (4,6);
            \fill[red] (4,5.5) rectangle (4.5,6);
            \fill[red] (4.5,5.5) rectangle (5,6);
            \fill[red] (5,5.5) rectangle (5.5,6);
            \fill[red] (5.5,5.5) rectangle (6,6);
            \draw[very thick, white] plot [smooth cycle] coordinates {(1.22,1.74) (2.33,1.12) (4.61,2.18) (4.19,3.07) (2.83,4.79) (2.3,3)};
            \draw[step=0.5cm,very thin] (0,0) grid (6,6);
            \node[text=white] at (3.65,1.3) {$\bm{\Gamma}$};
            \node[] at (3.27,2.7) {$\Omega_-$};
            \node[] at (1.25,4.25) {$\Omega_+$};
        \end{tikzpicture}
    \end{minipage}\hfill
    \begin{minipage}{0.33\textwidth}
        \centering
        \begin{tikzpicture}[scale=0.67]
            \fill[cyan] (1,1) rectangle (1.5,1.5);
            \fill[cyan] (1.5,1) rectangle (2,1.5);
            \fill[cyan] (2,1) rectangle (2.5,1.5);
            \fill[cyan] (2.5,1) rectangle (3,1.5);
            \fill[cyan] (3,1) rectangle (3.5,1.5);
            
            \fill[cyan] (1,1.5) rectangle (1.5,2);
            \fill[cyan] (1.5,1.5) rectangle (2,2);
            \fill[cyan] (2,1.5) rectangle (2.5,2);
            \fill[cyan] (2.5,1.5) rectangle (3,2);
            \fill[cyan] (3,1.5) rectangle (3.5,2);
            \fill[cyan] (3.5,1.5) rectangle (4,2);
            \fill[cyan] (4,1.5) rectangle (4.5,2);
    
            \fill[cyan] (1,2) rectangle (1.5,2.5);
            \fill[cyan] (1.5,2) rectangle (2,2.5);
            \fill[cyan] (2,2) rectangle (2.5,2.5);
            \fill[cyan] (2.5,2) rectangle (3,2.5);
            \fill[cyan] (3,2) rectangle (3.5,2.5);
            \fill[cyan] (3.5,2) rectangle (4,2.5);
            \fill[cyan] (4,2) rectangle (4.5,2.5);
            \fill[cyan] (4.5,2) rectangle (5,2.5);
            
            \fill[cyan] (1.5,2.5) rectangle (2,3);
            \fill[cyan] (2,2.5) rectangle (2.5,3);
            \fill[cyan] (2.5,2.5) rectangle (3,3);
            \fill[cyan] (3,2.5) rectangle (3.5,3);
            \fill[cyan] (3.5,2.5) rectangle (4,3);
            \fill[cyan] (4,2.5) rectangle (4.5,3);
            \fill[cyan] (4.5,2.5) rectangle (5,3);
            
            \fill[cyan] (2,3) rectangle (2.5,3.5);
            \fill[cyan] (2.5,3) rectangle (3,3.5);
            \fill[cyan] (3,3) rectangle (3.5,3.5);
            \fill[cyan] (3.5,3) rectangle (4,3.5);
            \fill[cyan] (4,3) rectangle (4.5,3.5);
            
            \fill[cyan] (2,3.5) rectangle (2.5,4);
            \fill[cyan] (2.5,3.5) rectangle (3,4);
            \fill[cyan] (3,3.5) rectangle (3.5,4);
            \fill[cyan] (3.5,3.5) rectangle (4,4);
            
            \fill[cyan] (2.5,4) rectangle (3,4.5);
            \fill[cyan] (3,4) rectangle (3.5,4.5);
            \fill[cyan] (3.5,4) rectangle (4,4.5);
            
            \fill[cyan] (2.5,4.5) rectangle (3,5);
            \fill[cyan] (3,4.5) rectangle (3.5,5);
            
            \draw[very thick, white] plot [smooth cycle] coordinates {(1.22,1.74) (2.33,1.12) (4.61,2.18) (4.19,3.07) (2.83,4.79) (2.3,3)};
            \draw[step=0.5cm,very thin] (0,0) grid (6,6);
            \draw[ultra thick, yellow] (1.5,1) -- (1.5,1.5);
            \draw[ultra thick, yellow] (2,1) -- (2,1.5);
            \draw[ultra thick, yellow] (2.5,1) -- (2.5,1.5);
            \draw[ultra thick, yellow] (3,1) -- (3,1.5);
    
            \draw[ultra thick, yellow] (1.5,1.5) -- (1.5,2);
            \draw[ultra thick, yellow] (3,1.5) -- (3,2);
            \draw[ultra thick, yellow] (3.5,1.5) -- (3.5,2);
            \draw[ultra thick, yellow] (4,1.5) -- (4,2);
    
            \draw[ultra thick, yellow] (1.5,2) -- (1.5,2.5);
            \draw[ultra thick, yellow] (2,2) -- (2,2.5);
            \draw[ultra thick, yellow] (4,2) -- (4,2.5);
            \draw[ultra thick, yellow] (4.5,2) -- (4.5,2.5);
    
            \draw[ultra thick, yellow] (2,2.5) -- (2,3);
            \draw[ultra thick, yellow] (2.5,2.5) -- (2.5,3);
            \draw[ultra thick, yellow] (4,2.5) -- (4,3);
            \draw[ultra thick, yellow] (4.5,2.5) -- (4.5,3);
    
            \draw[ultra thick, yellow] (2.5,3) -- (2.5,3.5);
            \draw[ultra thick, yellow] (3.5,3) -- (3.5,3.5);
            \draw[ultra thick, yellow] (4,3) -- (4,3.5);
    
            \draw[ultra thick, yellow] (2.5,3.5) -- (2.5,4);
            \draw[ultra thick, yellow] (3,3.5) -- (3,4);
            \draw[ultra thick, yellow] (3.5,3.5) -- (3.5,4);
    
            \draw[ultra thick, yellow] (3,4) -- (3,4.5);
            \draw[ultra thick, yellow] (3.5,4) -- (3.5,4.5);
    
            \draw[ultra thick, yellow] (3,4.5) -- (3,5);
            \draw[ultra thick, yellow] (1,1.5) -- (1.5,1.5);
            \draw[ultra thick, yellow] (1.5,1.5) -- (2,1.5);
            \draw[ultra thick, yellow] (2,1.5) -- (2.5,1.5);
            \draw[ultra thick, yellow] (2.5,1.5) -- (3,1.5);
            \draw[ultra thick, yellow] (3,1.5) -- (3.5,1.5);
            
            \draw[ultra thick, yellow] (1,2) -- (1.5,2);
            \draw[ultra thick, yellow] (1.5,2) -- (2,2);
            \draw[ultra thick, yellow] (3,2) -- (3.5,2);
            \draw[ultra thick, yellow] (3.5,2) -- (4,2);
            \draw[ultra thick, yellow] (4,2) -- (4.5,2);
    
            \draw[ultra thick, yellow] (1.5,2.5) -- (2,2.5);
            \draw[ultra thick, yellow] (2,2.5) -- (2.5,2.5);
            \draw[ultra thick, yellow] (4,2.5) -- (4.5,2.5);
            \draw[ultra thick, yellow] (4.5,2.5) -- (5,2.5);
    
            \draw[ultra thick, yellow] (2,3) -- (2.5,3);
            \draw[ultra thick, yellow] (3.5,3) -- (4,3);
            \draw[ultra thick, yellow] (4,3) -- (4.5,3);
    
            \draw[ultra thick, yellow] (2,3.5) -- (2.5,3.5);
            \draw[ultra thick, yellow] (2.5,3.5) -- (3,3.5);
            \draw[ultra thick, yellow] (3.5,3.5) -- (4,3.5);
            
            \draw[ultra thick, yellow] (2.5,4) -- (3,4);
            \draw[ultra thick, yellow] (3,4) -- (3.5,4);
            \draw[ultra thick, yellow] (3.5,4) -- (4,4);
    
            \draw[ultra thick, yellow] (2.5,4.5) -- (3,4.5);
            \draw[ultra thick, yellow] (3,4.5) -- (3.5,4.5);
        \end{tikzpicture}
    \end{minipage}\hfill
    \begin{minipage}{0.33\textwidth}
        \centering
        \begin{tikzpicture}[scale=0.67]
            \fill[red] (0,0) rectangle (0.5,0.5);
            \fill[red] (0.5,0) rectangle (1,0.5);
            \fill[red] (1,0) rectangle (1.5,0.5);
            \fill[red] (1.5,0) rectangle (2,0.5);
            \fill[red] (2,0) rectangle (2.5,0.5);
            \fill[red] (2.5,0) rectangle (3,0.5);
            \fill[red] (3,0) rectangle (3.5,0.5);
            \fill[red] (3.5,0) rectangle (4,0.5);
            \fill[red] (4,0) rectangle (4.5,0.5);
            \fill[red] (4.5,0) rectangle (5,0.5);
            \fill[red] (5,0) rectangle (5.5,0.5);
            \fill[red] (5.5,0) rectangle (6,0.5);
    
            \fill[red] (0,0.5) rectangle (0.5,1);
            \fill[red] (0.5,0.5) rectangle (1,1);
            \fill[red] (1,0.5) rectangle (1.5,1);
            \fill[red] (1.5,0.5) rectangle (2,1);
            \fill[red] (2,0.5) rectangle (2.5,1);
            \fill[red] (2.5,0.5) rectangle (3,1);
            \fill[red] (3,0.5) rectangle (3.5,1);
            \fill[red] (3.5,0.5) rectangle (4,1);
            \fill[red] (4,0.5) rectangle (4.5,1);
            \fill[red] (4.5,0.5) rectangle (5,1);
            \fill[red] (5,0.5) rectangle (5.5,1);
            \fill[red] (5.5,0.5) rectangle (6,1);
    
            \fill[red] (0,1) rectangle (0.5,1.5);
            \fill[red] (0.5,1) rectangle (1,1.5);
            \fill[red] (1,1) rectangle (1.5,1.5);
            \fill[red] (1.5,1) rectangle (2,1.5);
            \fill[red] (2,1) rectangle (2.5,1.5);
            \fill[red] (2.5,1) rectangle (3,1.5);
            \fill[red] (3,1) rectangle (3.5,1.5);
            \fill[red] (3.5,1) rectangle (4,1.5);
            \fill[red] (4,1) rectangle (4.5,1.5);
            \fill[red] (4.5,1) rectangle (5,1.5);
            \fill[red] (5,1) rectangle (5.5,1.5);
            \fill[red] (5.5,1) rectangle (6,1.5);
    
            \fill[red] (0,1.5) rectangle (0.5,2);
            \fill[red] (0.5,1.5) rectangle (1,2);
            \fill[red] (1,1.5) rectangle (1.5,2);
            \fill[red] (3,1.5) rectangle (3.5,2);
            \fill[red] (3.5,1.5) rectangle (4,2);
            \fill[red] (4,1.5) rectangle (4.5,2);
            \fill[red] (4.5,1.5) rectangle (5,2);
            \fill[red] (5,1.5) rectangle (5.5,2);
            \fill[red] (5.5,1.5) rectangle (6,2);
    
            \fill[red] (0,2) rectangle (0.5,2.5);
            \fill[red] (0.5,2) rectangle (1,2.5);
            \fill[red] (1,2) rectangle (1.5,2.5);
            \fill[red] (1.5,2) rectangle (2,2.5);
            \fill[red] (4,2) rectangle (4.5,2.5);
            \fill[red] (4.5,2) rectangle (5,2.5);
            \fill[red] (5,2) rectangle (5.5,2.5);
            \fill[red] (5.5,2) rectangle (6,2.5);
    
            \fill[red] (0,2.5) rectangle (0.5,3);
            \fill[red] (0.5,2.5) rectangle (1,3);
            \fill[red] (1,2.5) rectangle (1.5,3);
            \fill[red] (1.5,2.5) rectangle (2,3);
            \fill[red] (2,2.5) rectangle (2.5,3);
            \fill[red] (4,2.5) rectangle (4.5,3);
            \fill[red] (4.5,2.5) rectangle (5,3);
            \fill[red] (5,2.5) rectangle (5.5,3);
            \fill[red] (5.5,2.5) rectangle (6,3);
    
            \fill[red] (0,3) rectangle (0.5,3.5);
            \fill[red] (0.5,3) rectangle (1,3.5);
            \fill[red] (1,3) rectangle (1.5,3.5);
            \fill[red] (1.5,3) rectangle (2,3.5);
            \fill[red] (2,3) rectangle (2.5,3.5);
            \fill[red] (3.5,3) rectangle (4,3.5);
            \fill[red] (4,3) rectangle (4.5,3.5);
            \fill[red] (4.5,3) rectangle (5,3.5);
            \fill[red] (5,3) rectangle (5.5,3.5);
            \fill[red] (5.5,3) rectangle (6,3.5);
    
            \fill[red] (0,3.5) rectangle (0.5,4);
            \fill[red] (0.5,3.5) rectangle (1,4);
            \fill[red] (1,3.5) rectangle (1.5,4);
            \fill[red] (1.5,3.5) rectangle (2,4);
            \fill[red] (2,3.5) rectangle (2.5,4);
            \fill[red] (2.5,3.5) rectangle (3,4);
            \fill[red] (3.5,3.5) rectangle (4,4);
            \fill[red] (4,3.5) rectangle (4.5,4);
            \fill[red] (4.5,3.5) rectangle (5,4);
            \fill[red] (5,3.5) rectangle (5.5,4);
            \fill[red] (5.5,3.5) rectangle (6,4);
    
            \fill[red] (0,4) rectangle (0.5,4.5);
            \fill[red] (0.5,4) rectangle (1,4.5);
            \fill[red] (1,4) rectangle (1.5,4.5);
            \fill[red] (1.5,4) rectangle (2,4.5);
            \fill[red] (2,4) rectangle (2.5,4.5);
            \fill[red] (2.5,4) rectangle (3,4.5);
            \fill[red] (3,4) rectangle (3.5,4.5);
            \fill[red] (3.5,4) rectangle (4,4.5);
            \fill[red] (4,4) rectangle (4.5,4.5);
            \fill[red] (4.5,4) rectangle (5,4.5);
            \fill[red] (5,4) rectangle (5.5,4.5);
            \fill[red] (5.5,4) rectangle (6,4.5);
    
            \fill[red] (0,4.5) rectangle (0.5,5);
            \fill[red] (0.5,4.5) rectangle (1,5);
            \fill[red] (1,4.5) rectangle (1.5,5);
            \fill[red] (1.5,4.5) rectangle (2,5);
            \fill[red] (2,4.5) rectangle (2.5,5);
            \fill[red] (2.5,4.5) rectangle (3,5);
            \fill[red] (3,4.5) rectangle (3.5,5);
            \fill[red] (3.5,4.5) rectangle (4,5);
            \fill[red] (4,4.5) rectangle (4.5,5);
            \fill[red] (4.5,4.5) rectangle (5,5);
            \fill[red] (5,4.5) rectangle (5.5,5);
            \fill[red] (5.5,4.5) rectangle (6,5);
    
            \fill[red] (0,5) rectangle (0.5,5.5);
            \fill[red] (0.5,5) rectangle (1,5.5);
            \fill[red] (1,5) rectangle (1.5,5.5);
            \fill[red] (1.5,5) rectangle (2,5.5);
            \fill[red] (2,5) rectangle (2.5,5.5);
            \fill[red] (2.5,5) rectangle (3,5.5);
            \fill[red] (3,5) rectangle (3.5,5.5);
            \fill[red] (3.5,5) rectangle (4,5.5);
            \fill[red] (4,5) rectangle (4.5,5.5);
            \fill[red] (4.5,5) rectangle (5,5.5);
            \fill[red] (5,5) rectangle (5.5,5.5);
            \fill[red] (5.5,5) rectangle (6,5.5);
    
            \fill[red] (0,5.5) rectangle (0.5,6);
            \fill[red] (0.5,5.5) rectangle (1,6);
            \fill[red] (1,5.5) rectangle (1.5,6);
            \fill[red] (1.5,5.5) rectangle (2,6);
            \fill[red] (2,5.5) rectangle (2.5,6);
            \fill[red] (2.5,5.5) rectangle (3,6);
            \fill[red] (3,5.5) rectangle (3.5,6);
            \fill[red] (3.5,5.5) rectangle (4,6);
            \fill[red] (4,5.5) rectangle (4.5,6);
            \fill[red] (4.5,5.5) rectangle (5,6);
            \fill[red] (5,5.5) rectangle (5.5,6);
            \fill[red] (5.5,5.5) rectangle (6,6);
            \draw[very thick, white] plot [smooth cycle] coordinates {(1.22,1.74) (2.33,1.12) (4.61,2.18) (4.19,3.07) (2.83,4.79) (2.3,3)};
            \draw[step=0.5cm,very thin] (0,0) grid (6,6);
            \draw[ultra thick, yellow] (1,1) -- (1,1.5);
            \draw[ultra thick, yellow] (1.5,1) -- (1.5,1.5);
            \draw[ultra thick, yellow] (2,1) -- (2,1.5);
            \draw[ultra thick, yellow] (2.5,1) -- (2.5,1.5);
            \draw[ultra thick, yellow] (3,1) -- (3,1.5);
            \draw[ultra thick, yellow] (3.5,1) -- (3.5,1.5);
    
            \draw[ultra thick, yellow] (1,1.5) -- (1,2);
            \draw[ultra thick, yellow] (3.5,1.5) -- (3.5,2);
            \draw[ultra thick, yellow] (4,1.5) -- (4,2);
            \draw[ultra thick, yellow] (4.5,1.5) -- (4.5,2);
    
            \draw[ultra thick, yellow] (1,2) -- (1,2.5);
            \draw[ultra thick, yellow] (1.5,2) -- (1.5,2.5);
            \draw[ultra thick, yellow] (4.5,2) -- (4.5,2.5);
            \draw[ultra thick, yellow] (5,2) -- (5,2.5);
    
            \draw[ultra thick, yellow] (1.5,2.5) -- (1.5,3);
            \draw[ultra thick, yellow] (2,2.5) -- (2,3);
            \draw[ultra thick, yellow] (4.5,2.5) -- (4.5,3);
            \draw[ultra thick, yellow] (5,2.5) -- (5,3);
    
            \draw[ultra thick, yellow] (2,3) -- (2,3.5);
            \draw[ultra thick, yellow] (4,3) -- (4,3.5);
            \draw[ultra thick, yellow] (4.5,3) -- (4.5,3.5);
    
            \draw[ultra thick, yellow] (2,3.5) -- (2,4);
            \draw[ultra thick, yellow] (2.5,3.5) -- (2.5,4);
            \draw[ultra thick, yellow] (4,3.5) -- (4,4);
    
            \draw[ultra thick, yellow] (2.5,4) -- (2.5,4.5);
            \draw[ultra thick, yellow] (3,4) -- (3,4.5);
            \draw[ultra thick, yellow] (3.5,4) -- (3.5,4.5);
            \draw[ultra thick, yellow] (4,4) -- (4,4.5);
    
            \draw[ultra thick, yellow] (2.5,4.5) -- (2.5,5);
            \draw[ultra thick, yellow] (3,4.5) -- (3,5);
            \draw[ultra thick, yellow] (3.5,4.5) -- (3.5,5);
            \draw[ultra thick, yellow] (1,1) -- (1.5,1);
            \draw[ultra thick, yellow] (1.5,1) -- (2,1);
            \draw[ultra thick, yellow] (2,1) -- (2.5,1);
            \draw[ultra thick, yellow] (2.5,1) -- (3,1);
            \draw[ultra thick, yellow] (3,1) -- (3.5,1);
    
            \draw[ultra thick, yellow] (1,1.5) -- (1.5,1.5);
            \draw[ultra thick, yellow] (3,1.5) -- (3.5,1.5);
            \draw[ultra thick, yellow] (3.5,1.5) -- (4,1.5);
            \draw[ultra thick, yellow] (4,1.5) -- (4.5,1.5);
    
            \draw[ultra thick, yellow] (1,2) -- (1.5,2);
            \draw[ultra thick, yellow] (4,2) -- (4.5,2);
            \draw[ultra thick, yellow] (4.5,2) -- (5,2);
    
            \draw[ultra thick, yellow] (1,2.5) -- (1.5,2.5);
            \draw[ultra thick, yellow] (1.5,2.5) -- (2,2.5);
            \draw[ultra thick, yellow] (4,2.5) -- (4.5,2.5);
            \draw[ultra thick, yellow] (4.5,2.5) -- (5,2.5);
    
            \draw[ultra thick, yellow] (1.5,3) -- (2,3);
            \draw[ultra thick, yellow] (2,3) -- (2.5,3);
            \draw[ultra thick, yellow] (4,3) -- (4.5,3);
            \draw[ultra thick, yellow] (4.5,3) -- (5,3);
    
            \draw[ultra thick, yellow] (2,3.5) -- (2.5,3.5);
            \draw[ultra thick, yellow] (3.5,3.5) -- (4,3.5);
            \draw[ultra thick, yellow] (4,3.5) -- (4.5,3.5);
    
            \draw[ultra thick, yellow] (2,4) -- (2.5,4);
            \draw[ultra thick, yellow] (2.5,4) -- (3,4);
            \draw[ultra thick, yellow] (3.5,4) -- (4,4);
    
            \draw[ultra thick, yellow] (2.5,4.5) -- (3,4.5);
            \draw[ultra thick, yellow] (3,4.5) -- (3.5,4.5);
            \draw[ultra thick, yellow] (3.5,4.5) -- (4,4.5);
    
            \draw[ultra thick, yellow] (2.5,5) -- (3,5);
            \draw[ultra thick, yellow] (3,5) -- (3.5,5);
        \end{tikzpicture}
    \end{minipage}
    \caption{Left: $\Omega$ and interface $\Gamma$ (white curve described as a level-set). Green elements: $\Omega_{\Gamma,h}$. Blue elements: $\Omega_{-,h} \backslash \mathcal T_{\Gamma,h}$. Red elements: $\Omega_{+,h} \backslash \mathcal T_{\Gamma,h}$. Middle: Blue elements: $\Omega_{-,h}$. Yellow edges: $\mathcal G_{-,h}$. Right: Red elements: $\Omega_{+,h}$. Yellow edges: $\mathcal G_{+,h}$. Note that a ghost penalty face may belong to both $\Omega_-,\Omega_+$.}
\end{figure}

With the above laid out, we define the spectral element space on each subdomain $\Omega_{\pm, h}$,
\begin{equation} \label{equ:semspace}
    V_\pm^{p, h} \coloneqq \{ u^{p, h} : u^{p, h} \in C^0(\Omega_{\pm, h}), \; u^{p,h}|_K\in \mathbb{Q}_p(K), \; \forall \; K \in \mathcal{T}_{\pm, h} \}.
\end{equation}
The unfitted spectral element spaces are direct sums of the above, i.e.
\begin{equation} \label{equ:usemspace}
    V^{p, h} \coloneqq V_-^{p, h} \oplus V_+^{p, h}.
\end{equation}
 For any  interface element $K \in \mathcal T_{\Gamma,h}$, there are two independent sets of basis functions belonging to both $V_\pm^{p,h}$ respectively.
 To handle the homogeneous Dirichlet boundary condition, we introduce the subspace
\begin{equation}\label{equ:h0semspace}
	V^{p, h}_0 \coloneqq V^{p,h}\cap H^1_0(\Omega). 
\end{equation}

We present more notation before proceeding to the bilinear form. The stabilizing weights are (as in \cite{Dolbow2012}),
\begin{equation}\label{equ:weight}
	\kappa_+ = \frac{\alpha_-|K_+|}{\alpha_-|K_+|+\alpha_+|K_-|}
   \quad \text{ and }\quad
   \kappa_- = \frac{\alpha_+|K_-|}{\alpha_-|K_+|+\alpha_+|K_-|},
\end{equation}
where $|\cdot|$ represents the appropriate set measure. For $v^{p,h} = (v_-^{p,h}, v_+^{p, h}) \in V_-^{p, h} \oplus V_+^{p, h}$, the average operators along the interface are
\begin{equation}\label{equ:average}
	\dgal{v^{p,h}} \coloneqq \kappa_- v_-^{p,h} + \kappa_+ v_+^{p,h},
\quad 
	\dgal{v^{p,h}}^\ast \coloneqq \kappa_+ v_-^{p,h} + \kappa_- v_+^{p,h},
\end{equation}
and the jump, in the limit sense, is
\begin{equation}\label{equ:jumpgamma2}
	\llbracket v^{p,h}  \rrbracket \coloneqq v_+^{p,h} - v_-^{p,h}.
\end{equation}
Likewise, a jump on any edge $e \in \mathcal{G}_{\pm, h}$, can be described as
\begin{equation}\label{equ:jumpedge}
	[v^{p,h}](x) = \lim_{t\rightarrow 0^+}v^{p,h}(x+t\mathbf{n}) -  \lim_{t\rightarrow 0^+}v^{p,h}(x-t\mathbf{n}), \quad x\in e,
\end{equation}
with $\mathbf{n}$ being $e$'s unit normal vector with arbitrary fixed orientation.

With the above, we are now able to introduce our bilinear form
\begin{equation}\label{equ:bilinear}
	\begin{aligned}
	a_{p,h}(v^{p,h}, w^{p,h}) \coloneqq
		&\sum\limits_{i=\pm}\left(\alpha_i\nabla v_{i}^{p,h}, \nabla w_{i}^{p,h}\right)_{\Omega_i}
+ \left\langle  \llbracket v^{p,h} \rrbracket ,\dgal{\alpha \partial_{\mathbf{n}} w^{p,h}}\right\rangle_{\Gamma} \\
&+ \left\langle  \dgal{\alpha \partial_{\mathbf{n}} v^{p,h} }, \llbracket w^{p,h} \rrbracket \right\rangle_{\Gamma}
+ \frac{\gamma p^2}{h} \left\langle \llbracket v^{p,h} \rrbracket,  \llbracket w^{p,h} \rrbracket\right\rangle_{\Gamma},
	\end{aligned}
\end{equation}
where the penalty (stabilization) parameter $\gamma$ on the interface element $K$ is
\begin{equation}\label{equ:gamma}
	\gamma|_K = \frac{2h_K|\Gamma_K|}{|K_+|/\alpha_++|K_-|/\alpha_-}.
\end{equation}

To  enhance the robustness of the proposed unfitted spectral element method, we introduce the ghost penalty term 
\begin{equation}\label{equ:ghostpenalty}
	g_{p,h}(v^{p,h}, w^{p,h}) \coloneqq \sum_{e \in \mathcal{G}_{\pm,h}} \sum_{j=0}^{p} \frac{h^{2j+1}}{p^{2j}}([ \partial^j_{\mathbf{n}} v^{p,h} ], [ \partial^j_{\mathbf{n}} w^{p,h} ])_e. 
\end{equation}
\begin{remark}
    Only elements near vicinity of interface are involved in the ghost penalty bilinear form.
\end{remark}
\begin{remark}
In the coefficient of the ghost penalty term, we incorporate the dependence on the polynomial degree $p$ in the denominator. This change is necessary to ensure the accuracy of the error estimates derived below. We also observe numerical improvements with this alteration.
\end{remark}

The linear functional $l_h(\cdot)$ is defined as
\begin{equation}\label{equ:linearfun}
	l_{p,h}(v^{p,h}) \coloneqq \sum_{i=\pm} (f_i, v_i^{p, h})_{\Omega_i} + \left\langle  \bar{p} , \dgal{\alpha \partial_{\mathbf{n}} v^{p,h}} \right\rangle_{\Gamma} + \frac{\gamma p^2}{h} \left\langle \bar{p} , \llbracket v^{p,h} \rrbracket \right\rangle_{\Gamma} + \left\langle  \bar{q} , \dgal{ v^{p,h}}^* \right\rangle_{\Gamma}.
\end{equation}

The unfitted spectral element method for solving the interface problem defined by \eqref{equ:bvp} seeks to determine $u^{p,h} \in V_0^{p,h}$, such that the following equation holds for all $v^{p,h}\in V_0^{p,h}$:
\begin{equation}\label{equ:usem_bvp}
	A_{p,h}(u^{p,h}, v^{p,h}) 
	= l_{p,h}(v^{p,h}).
\end{equation}
where the bilinear form $A_{p,h}(\cdot, \cdot)$ is 
\begin{equation}\label{equ:extended_bilinear}
A_{p,h}(u^{p,h}, v^{p,h}) \coloneqq a_{p,h}(u^{p,h}, v^{p,h}) + \frac{\gamma_A}{h^2}g_{p,h}(u^{p,h}, v^{p,h}).
\end{equation}
with $\gamma_A$ being the ghost penalty parameter to be specified.

Similarly, the unfitted spectral element method applied to solving the interface eigenvalue problem described by equation \eqref{equ:eigenprob} aims to find $(\lambda^{p,h}, u^{p,h}) \in (\mathbb{R}^+, V_0^{p,h})$, such that the following equation holds for all $v^{p,h} \in V_0^{p,h}$:
\begin{equation}\label{equ:usem_eigen}
	A_{p,h}(u^{p,h}, v^{p,h})  = \lambda^{p,h}M_{p,h}(u^{p,h}, v^{p,h}),
\end{equation}
where the bilinear form  $M_{p,h}(u^{p,h}, v^{p,h})$ is defined as:
\begin{equation}\label{equ:extend_mass}
M_{p,h}(u^{p,h}, v^{p,h}) \coloneqq \sum_{i=\pm} (u_i^{p,h}, v_i^{p,h})_{\Omega_i} + \gamma_M g_{p,h}(u^{p,h}, v^{p,h}),
\end{equation}
with $\gamma_M$ being the ghost penalty parameter to be specified.

\begin{remark}
The ghost penalty term, ($\ref{equ:ghostpenalty}$), serves a crucial role in enhancing unfitted numerical methods' robustness, particularly in scenarios involving small cut geometries. For the unfitted spectral element method, we take into account its dependence on the polynomial degree $p$.
\end{remark}

\subsection{Stability analysis}
\label{ssec:stab}
In this subsection, we shall establish the well-posedness of the proposed unfitted spectral element methods. We commence by introducing the following definitions of energy norms:
\begin{align}
    &\vertiii{v}^2 \coloneqq \|\nabla v \|_{0, \Omega_- \cup \Omega_+}^2 + \| \dgal{\partial_{\mathbf{n}} v} \|_{-1,p,h,\Gamma}^2 + \| \llbracket v \rrbracket \|_{1,p, h,\Gamma}^2, \label{equ:energynorm}\\
    &\vertiii{v}_{\ast}^2 \coloneqq \vertiii{v}^2 + \frac{\gamma_A}{h^2} g_{p,h}(v, v), \label{equ:augmentedenergynorm}
\end{align}
where 
\begin{align}
	\| w \|_{-1,p,h,\Gamma}^2 \coloneqq \frac{h}{p^2} \sum_{K \in \mathcal{T}_{\Gamma,h}}\|w\|_{0, \Gamma_K}^2, 
		\text{ and } \| w \|_{1,p,h,\Gamma}^2 \coloneqq \frac{p^2}{h} \sum_{K \in \mathcal{T}_{\Gamma,h}}\|w\|_{0, \Gamma_K}^2.
\end{align}

To establish the well-posedness of discrete problems, it is imperative to introduce the following lemma:
\begin{theorem}\label{thm:wellposedness}
	For any $v, w \in V^{p,h}$, it holds that
	\begin{align}
			A_{p,h}(v, w) &\lesssim \vertiii{v}_{\ast}\vertiii{w}_{\ast}, \label{equ:continuity}\\
			\vertiii{v}_{\ast}^2 &\lesssim A_{p,h}(v, v) \label{equ:coercivity}.
	\end{align}
\end{theorem}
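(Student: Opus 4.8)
The plan is to establish the two estimates \eqref{equ:continuity} and \eqref{equ:coercivity} by the standard Nitsche machinery adapted to the $hp$ setting, with the ghost penalty term providing the crucial control over the cut portions of interface elements. The backbone of the argument is a trio of inequalities: (i) an $hp$ inverse/trace inequality on the cut pieces $K_\pm$ and on $\Gamma_K$ of the form $\|\partial_{\mathbf n} v\|_{0,\Gamma_K}^2 \lesssim \tfrac{p^2}{h}\|\nabla v\|_{0,K_\pm}^2 + (\text{ghost penalty contribution})$, valid uniformly in the cut position thanks to $g_{p,h}$; (ii) the arithmetic--geometric mean (Young's) inequality to absorb the mixed Nitsche terms; and (iii) the definition \eqref{equ:gamma} of $\gamma$, which is precisely calibrated (including the modified denominator flagged in the Remark) so that the weighted-average normal-flux term $\|\dgal{\alpha\partial_{\mathbf n} v}\|$ is dominated by $\kappa_+^2\alpha_+^2\|\partial_{\mathbf n}v_+\|^2 + \kappa_-^2\alpha_-^2\|\partial_{\mathbf n}v_-\|^2$ with constants that are uniform in the mesh--interface configuration. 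I would isolate inequality (i) as the single technical input; everything downstream is bookkeeping.

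For the continuity bound \eqref{equ:continuity}, I would expand $A_{p,h}(v,w) = a_{p,h}(v,w) + \tfrac{\gamma_A}{h^2}g_{p,h}(v,w)$ term by term. The two bulk terms $\sum_i(\alpha_i\nabla v_i,\nabla w_i)_{\Omega_i}$ are handled by Cauchy--Schwarz against $\|\nabla v\|_{0,\Omega_-\cup\Omega_+}\|\nabla w\|_{0,\Omega_-\cup\Omega_+}$. The two symmetric Nitsche coupling terms $\langle\llbracket v\rrbracket, \dgal{\alpha\partial_{\mathbf n}w}\rangle_\Gamma$ and its transpose are split on each $\Gamma_K$ using Cauchy--Schwarz with the scaling $h/p^2$ versus $p^2/h$, producing exactly $\|\dgal{\partial_{\mathbf n}w}\|_{-1,p,h,\Gamma}\|\llbracket v\rrbracket\|_{1,p,h,\Gamma}$ up to the coefficient bounds (here I use $\alpha_\pm \ge \alpha_0$ and the definition of the weights $\kappa_\pm \le 1$). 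The penalty term $\tfrac{p^2}{h}\langle\gamma\llbracket v\rrbracket,\llbracket w\rrbracket\rangle_\Gamma$ is bounded by $\|\llbracket v\rrbracket\|_{1,p,h,\Gamma}\|\llbracket w\rrbracket\|_{1,p,h,\Gamma}$ since $\gamma|_K$ scales like $h_K$ (so $\tfrac{p^2}{h}\gamma|_K \sim \tfrac{p^2 h_K}{h}|\Gamma_K|/(\cdots)$, a bounded multiple of the norm weight). Finally the ghost term $\tfrac{\gamma_A}{h^2}g_{p,h}(v,w) \le \tfrac{\gamma_A}{h^2}g_{p,h}(v,v)^{1/2}g_{p,h}(w,w)^{1/2}$ by the Cauchy--Schwarz inequality for the positive semidefinite form $g_{p,h}$, which is $\lesssim \vertiii{v}_\ast\vertiii{w}_\ast$. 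Collecting, every term is bounded by a product of the three constituent pieces of $\vertiii{\cdot}_\ast$, giving \eqref{equ:continuity}.

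For the coercivity bound \eqref{equ:coercivity}, I would write $A_{p,h}(v,v) = \sum_i\alpha_i\|\nabla v_i\|_{0,\Omega_i}^2 + 2\langle\llbracket v\rrbracket,\dgal{\alpha\partial_{\mathbf n}v}\rangle_\Gamma + \tfrac{p^2}{h}\langle\gamma\llbracket v\rrbracket,\llbracket v\rrbracket\rangle_\Gamma + \tfrac{\gamma_A}{h^2}g_{p,h}(v,v)$. The dangerous term is the cross term $2\langle\llbracket v\rrbracket,\dgal{\alpha\partial_{\mathbf n}v}\rangle_\Gamma$, which I bound below by $-\varepsilon^{-1}\|\dgal{\alpha\partial_{\mathbf n}v}\|_{-1,p,h,\Gamma}^2 - \varepsilon\|\llbracket v\rrbracket\|_{1,p,h,\Gamma}^2$ via Young's inequality. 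Using the calibrated $\gamma$ from \eqref{equ:gamma}, the penalty term dominates $c\|\llbracket v\rrbracket\|_{1,p,h,\Gamma}^2$; and using the $hp$ trace--inverse inequality with ghost stabilization, $\|\dgal{\alpha\partial_{\mathbf n}v}\|_{-1,p,h,\Gamma}^2 \le C_{\mathrm{tr}}\big(\sum_i\alpha_i\|\nabla v_i\|_{0,\Omega_i}^2 + \tfrac{1}{h^2}g_{p,h}(v,v)\big)$ — this is the step where the ghost penalty earns its keep by making the constant $C_{\mathrm{tr}}$ independent of how $\Gamma$ cuts the mesh. Choosing $\varepsilon$ small enough and $\gamma_A$ large enough (depending only on $C_{\mathrm{tr}}$ and $\alpha_0$), the negative contributions are absorbed into $\tfrac12\sum_i\alpha_i\|\nabla v_i\|_{0,\Omega_i}^2$, $\tfrac12\tfrac{\gamma_A}{h^2}g_{p,h}(v,v)$, and a positive fraction of the penalty term, leaving $A_{p,h}(v,v) \gtrsim \|\nabla v\|_{0,\Omega_-\cup\Omega_+}^2 + \|\llbracket v\rrbracket\|_{1,p,h,\Gamma}^2 + \tfrac{1}{h^2}g_{p,h}(v,v)$. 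It remains to recover the middle piece $\|\dgal{\partial_{\mathbf n}v}\|_{-1,p,h,\Gamma}^2$ of $\vertiii{v}_\ast$; but this follows from the same trace--inverse inequality (now used as an upper bound, cheaply added in with a small weight), so $\vertiii{v}_\ast^2 \lesssim A_{p,h}(v,v)$.

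The main obstacle is establishing the ghost-stabilized $hp$ trace inequality $\|\partial_{\mathbf n}v\|_{0,\Gamma_K}^2 \lesssim \tfrac{p^2}{h}\big(\|\nabla v\|_{0,K_\pm}^2 + \tfrac{1}{h}\sum_{e}\sum_{j=0}^p \tfrac{h^{2j+1}}{p^{2j}}\|[\partial_{\mathbf n}^j v]\|_{0,e}^2\big)$ with a constant independent of the cut position — equivalently, the statement that the ghost penalty lets one "extend" the $L^2$ norm of a polynomial from the anchored (uncut) part of its macro-patch to the tiny sliver $K_\pm$ without blow-up. In the $hp$ context this requires quantitative polynomial extension estimates on LGL-based spectral spaces, tracking the $h^{2j+1}/p^{2j}$ weights through a Taylor/Bramble--Hilbert-type argument on the face patches; the $p$-explicit inverse estimates for one-dimensional polynomials (with the sharp $p^2$ growth) are the technical core. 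I would either cite the corresponding $hp$ ghost-penalty lemmas from the cited literature on higher-order CutFEM (e.g. \cite{BadiaNe2022b, GMassing2019}) or state and prove this as a preliminary lemma before \Cref{thm:wellposedness}; all remaining steps are routine Cauchy--Schwarz and absorption.
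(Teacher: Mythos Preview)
Your proposal is correct and follows essentially the same route as the paper: Cauchy--Schwarz for continuity and the Nitsche-plus-ghost-penalty coercivity argument (Young's inequality on the cross term together with the cut-robust $hp$ trace/inverse estimate) is precisely what the paper invokes by referring to \cite{GMassing2019}. You have simply fleshed out what the paper compresses into two sentences, and your identification of the ghost-stabilized trace inequality as the single nontrivial ingredient is exactly right.
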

\begin{proof}
	The  continuity of bilinear form $A_{p,h}(\cdot, \cdot)$ can be proved using the Cauchy-Schwartz inequality. For the coercivity, it can be proved using the same approach as in \cite{GMassing2019} by using the enhanced stability of the ghost penalty term. 
\end{proof}

\begin{remark}
 We shall establish the continuity of the bilinear form $a_{p,h}(\cdot, \cdot)$ with respect to the energy norm $\vertiii{\cdot }$. In particular, we have\
 \begin{equation} \label{equ:newcont}
 	a_{p,h}(v, w)\lesssim \vertiii{v }\vertiii{w}, \quad \forall v,w\in V^{p,h}.
 \end{equation}
 \end{remark}

Combining Theorem \ref{thm:wellposedness} with the Lax-Milgram theorem \cite{Evans2008}, we can conclude that the discrete problem \eqref{equ:usem_bvp} admits a unique solution and the discrete eigenvalue problem \eqref{equ:usem_eigen} is well-posed. Furthermore, according to the spectral theory \cite{BaOs1991}, the discrete eigenvalues can be ordered as:
\begin{equation}
	0 < \lambda_1^{p,h} \le \lambda_2^{p,h} \le \cdots \le \lambda_{N^{p,h}}^{p,h},
\end{equation}
where $N^{p,h}$ represents the number of basis functions in $V_0^{p,h}$. The associated orthonormal eigenfunctions are denoted as $u_i^{p,h}$ for $i=1, \hdots, N^{p,h}$.

\subsection{High-order quadrature}
\label{ssec:quad}
In this subsection, we will elucidate the numerical integration methods employed to compute the integral quantities within both the bilinear and linear forms. Determining the weights and nodes for our quadrature is of paramount importance.

\begin{figure}[!h]
    \centering
    \begin{tikzpicture}[scale=0.6]
        \draw[very thick] (0,0) rectangle (12,8);
        \draw [very thick, green] plot [smooth] coordinates {(2,-0.7) (3,2) (4,5) (6,3) (8,6) (9,6.3) (10,8.5)};
        \node[text=green] at (10.2,8.8) {$\bm{\Gamma}$};
        \filldraw[brown] (0,0) circle (3pt) node[anchor=north]{$(r_0,\tilde{y}_m)$};
        \filldraw[brown] (2.3,0) circle (3pt) node[anchor=north west]{$(r_1,\tilde{y}_m)$};
        \filldraw[brown] (9.8,8) circle (3pt) node[anchor=south east]{$(r_2,\tilde{y}_M)$};
        \filldraw[brown] (12,8) circle (3pt) node[anchor=south]{$(r_3,\tilde{y}_M)$};
        \draw [dashed] (2.3,0) -- (2.3,8);
        \draw [dashed] (9.8,0) -- (9.8,8);
        \filldraw[red] (0.16,0.555) circle (3pt);
        \filldraw[red] (0.76,0.555) circle (3pt);
        \filldraw[red] (1.54,0.555) circle (3pt);
        \filldraw[red] (2.14,0.555) circle (3pt);

        \filldraw[red] (0.16,2.64) circle (3pt);
        \filldraw[red] (0.76,2.64) circle (3pt);
        \filldraw[red] (1.54,2.64) circle (3pt);
        \filldraw[red] (2.14,2.64) circle (3pt);

        \filldraw[red] (0.16,5.36) circle (3pt);
        \filldraw[red] (0.76,5.36) circle (3pt);
        \filldraw[red] (1.54,5.36) circle (3pt);
        \filldraw[red] (2.14,5.36) circle (3pt);

        \filldraw[red] (0.16,7.444) circle (3pt);
        \filldraw[red] (0.76,7.444) circle (3pt);
        \filldraw[red] (1.54,7.444) circle (3pt);
        \filldraw[red] (2.14,7.444) circle (3pt);

        \filldraw[red] (2.82,1.951) circle (3pt);
        \filldraw[red] (4.775,4.464) circle (3pt);
        \filldraw[red] (7.325,5.115) circle (3pt);
        \filldraw[red] (9.279,6.883) circle (3pt);

        \filldraw[red] (2.82,3.645) circle (3pt);
        \filldraw[red] (4.775,5.454) circle (3pt);
        \filldraw[red] (7.325,5.923) circle (3pt);
        \filldraw[red] (9.279,7.196) circle (3pt);

        \filldraw[red] (2.82,5.855) circle (3pt);
        \filldraw[red] (4.775,6.746) circle (3pt);
        \filldraw[red] (7.325,6.977) circle (3pt);
        \filldraw[red] (9.279,7.604) circle (3pt);

        \filldraw[red] (2.82,7.549) circle (3pt);
        \filldraw[red] (4.775,7.736) circle (3pt);
        \filldraw[red] (7.325,7.785) circle (3pt);
        \filldraw[red] (9.279,7.917) circle (3pt);
        
        \filldraw[blue] (2.82,0.104) circle (3pt);
        \filldraw[blue] (4.775,0.292) circle (3pt);
        \filldraw[blue] (7.325,0.34) circle (3pt);
        \filldraw[blue] (9.279,0.472) circle (3pt);

        \filldraw[blue] (2.82,0.495) circle (3pt);
        \filldraw[blue] (4.775,1.386) circle (3pt);
        \filldraw[blue] (7.325,1.617) circle (3pt);
        \filldraw[blue] (9.279,2.244) circle (3pt);

        \filldraw[blue] (2.82,1.005) circle (3pt);
        \filldraw[blue] (4.775,2.814) circle (3pt);
        \filldraw[blue] (7.325,3.283) circle (3pt);
        \filldraw[blue] (9.279,4.556) circle (3pt);

        \filldraw[blue] (2.82,1.396) circle (3pt);
        \filldraw[blue] (4.775,3.908) circle (3pt);
        \filldraw[blue] (7.325,4.56) circle (3pt);
        \filldraw[blue] (9.279,6.328) circle (3pt);

        \filldraw[yellow] (2.82,1.5) circle (3pt);
        \filldraw[yellow] (4.775,4.2) circle (3pt);
        \filldraw[yellow] (7.325,4.9) circle (3pt);
        \filldraw[yellow] (9.279,6.8) circle (3pt);
        \filldraw[blue] (9.953,0.555) circle (3pt);
        \filldraw[blue] (10.526,0.555) circle (3pt);
        \filldraw[blue] (11.274,0.555) circle (3pt);
        \filldraw[blue] (11.847,0.555) circle (3pt);

        \filldraw[blue] (9.953,2.64) circle (3pt);
        \filldraw[blue] (10.526,2.64) circle (3pt);
        \filldraw[blue] (11.274,2.64) circle (3pt);
        \filldraw[blue] (11.847,2.64) circle (3pt);

        \filldraw[blue] (9.953,5.36) circle (3pt);
        \filldraw[blue] (10.526,5.36) circle (3pt);
        \filldraw[blue] (11.274,5.36) circle (3pt);
        \filldraw[blue] (11.847,5.36) circle (3pt);

        \filldraw[blue] (9.953,7.444) circle (3pt);
        \filldraw[blue] (10.526,7.444) circle (3pt);
        \filldraw[blue] (11.274,7.444) circle (3pt);
        \filldraw[blue] (11.847,7.444) circle (3pt);

    \end{tikzpicture}
    \caption{Quadrature on interface element $K = [\tilde{x}_m,\tilde{x}_M] \times [\tilde{y}_m,\tilde{y}_M]$. $r_1,r_2$ are roots for functions $\tilde{f}_m, \tilde{f}_M$ respectively. Volume quadratures: blue nodes $(\bar{\bm{r}} \; \vert \; \check{\bm{\rho}})$ for $\Omega_-$ and red nodes $(\bar{\bm{r}} \; \vert \; \check{\bm{\rho}})$ for $\Omega_+$. Surface quadrature on $\Gamma$: yellow nodes, $\bm{\rho}$, also $f_j^*$'s root for each $j$.}
    \label{fig:InterfElemQuad}
\end{figure}
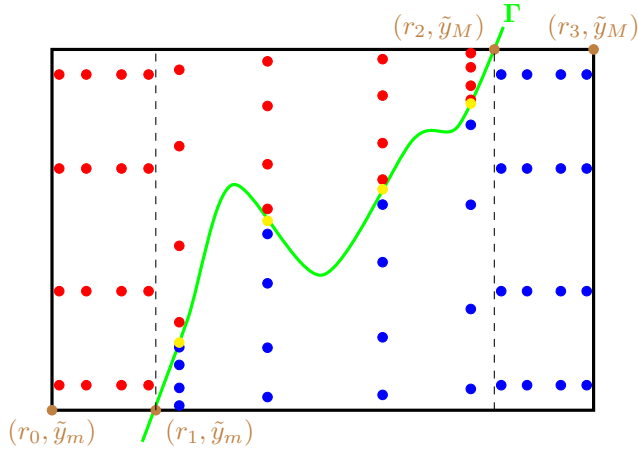

To begin, we use standard techniques for regular elements $K \in \Omega_{\pm,h} \backslash \mathcal{T}_{\Gamma,h}$, 2D Gauss-Legendre quadrature being our method of choice.

However, when dealing specifically with interface elements, $K \in \mathcal{T}_{\Gamma,h}$, a more sophisticated algorithm is required. This is because our integral regions are arbitrary and implicitly defined through a level-set. Two main approaches exist in this context: one can either adjust the weights or reposition the nodes to best fit the irregular integration domain.

In our implementation, we choose an algorithm developed by Saye, as described in \cite{Saye2015}, which falls into the latter category. This choice was motivated by the fact that we are dealing with especially high-order polynomials. Indeed, for $p$-convergence, we quickly encounter machine precision issues or limitations, and methods that aim to adjust weights require a significantly larger number of nodes to achieve comparable accuracy. Furthermore, other available techniques were originally designed with simplex elements in mind, making their adaptation and implementation for our rectangular meshes less nature.

We only summarize the method here, specifically for 2D, and refer interested readers to \cite{Saye2015} for full details and generality. A crucial requirement for the algorithm is for $\Gamma$ to be a local graph, i.e., $\Gamma \vert_K = \Phi(x,y), \; (x,y) \in K$ for any $K \in \mathcal{T}_{\Gamma,h}$. Hence, our numerical grid must be fine enough with respect to the problem's geometry. From Assumption \ref{ass:InterfIntersects}, only a minor extra refinement may be needed, in the worst case scenario, to satisfy the graph condition. For context, we re-state that our elements are two-dimensional, square-shaped, and the basis functions defined through interpolation about the 2D LGL points.

\begin{remark}
    We strictly use Gauss-Legendre nodes for quadratures both for interface and non-interface elements. Higher precision and no collapsing issues make them superior. Indeed, LGL points collapse on top of each other, at $(r_1,\tilde{y}_m)$ for example, if used in Algorithm \ref{alg:capTest}.
\end{remark}
\begin{remark}
    We use Ridder's method \cite{Ridders1979}, based on Regula Falsi and the exponential function, for root-finding in pseudo-code Algorithm  \ref{alg:capTest}. 
\end{remark}

\begin{algorithm}[H]
\caption{Interface Element Quadratures}\label{alg:capTest}
\begin{algorithmic}[1]
\STATE Let $\Gamma = \psi(x,y)$ and  $(z_j,w_j)_{j=0}^L$ be the 1D Gauss-Legendre quadrature on reference interval $[-1,1]$ of order $2L+1$. 
\FOR{$T \in \mathcal T_{\Gamma,h}$}
    \STATE $T = [x_m,x_M] \times [y_m,y_M]$.
    \STATE $x_c = \frac{x_m+x_M}{2}, \; y_c = \frac{y_m+y_M}{2}$. \COMMENT{Choose independent and dependent coordinates to avoid null derivative and be able to use Implicit Function Theorem.}
    \IF{$|\psi_x(x_c,y_c)| \geq |\psi_y(x_c,y_c)|$}
        \STATE $(\tilde{x},\tilde{y}) = (x,y)$
    \ELSE
        \STATE $(\tilde{x},\tilde{y}) = (y,x)$
    \ENDIF
    \STATE Define two new functions $\tilde{f}_m(\tilde{x}) \coloneqq \psi(\tilde{x},\tilde{y}_m)$ and $\tilde{f}_M(\tilde{x}) \coloneqq \psi(\tilde{x},\tilde{y}_M)$ and compute their root, $r$, on $[\tilde{x}_m,\tilde{x}_M]$. 
    \STATE We now have a partition $[\tilde{x}_m,\tilde{x}_M] = \cup_{i=0}^{N-1} [r_i,r_{i+1}]$ where $r_0 = \tilde{x}_m$ and $r_N = \tilde{x}_M$.
    \FOR{$i = 0 \hdots N-1$}
        \IF{$[r_i,r_{i+1}] \times [\tilde{y}_m,\tilde{y}_M] \cap \Gamma = \emptyset$}
            \STATE Apply 2D Gauss-Legendre quadrature on $[r_i,r_{i+1}] \times [\tilde{y}_m,\tilde{y}_M]$.
        \ELSE 
            \STATE Apply 1D Gauss-Legendre quadrature on $[r_i,r_{i+1}]$ i.e. 
            $$\bm{r}^* = \{r^*_j\}_{j=0}^L = \left\{ r_i + \frac{r_{i+1}-r_i}{2} (z_j+1) \right\}_{j=0}^L. $$ \label{if1dGauss}
            \FOR{$j = 0 \hdots L$}
                \STATE Define $f_j^*(\tilde{y}) \coloneqq \psi(r_j^*,\tilde{y})$ and compute its root, $\rho_j$, on the interval $[\tilde{y}_m,\tilde{y}_M]$.
                \STATE Apply 1D Gauss-Legendre quadrature on $[\tilde{y}_m,\rho_j]$ and also on $[\rho_j,\tilde{y}_M]$ i.e.
                $$ \check{\bm{\rho}}_j = \left\{ \tilde{y}_m + \frac{\rho_j-\tilde{y}_m}{2}(z_k+1) \right\}_{k=0}^L, \, \hat{\bm{\rho}}_j = \left\{ \rho_j + \frac{\tilde{y}_M-\rho_j}{2}(z_k+1) \right\}_{k=0}^L. $$
            \ENDFOR
            \STATE For volume integrals, $(\bar{\bm{r}} \; \vert \; \check{\bm{\rho}})$, $(\bar{\bm{r}} \; \vert \; \hat{\bm{\rho}})$, both in $\mathbb R^{(L+1)^2 \times 2}$, are quadrature nodes for regions $T \cap \Omega_-$, $T \cap \Omega_+$ respectively. 
            \begin{gather*}
                \bar{\bm{r}} \coloneqq (\bm{r}^*,\hdots,\bm{r}^*), \; \check{\bm{\rho}} = (\check{\bm{\rho}}_0,\hdots,\check{\bm{\rho}}_L), \; \hat{\bm{\rho}} = (\hat{\bm{\rho}}_0,\hdots,\hat{\bm{\rho}}_L), \text{ all in } \mathbb R^{(L+1)^2}.
            \end{gather*}
            Weights are scaled accordingly with $\frac{r_{i+1}-r_i}{2}$, and potential values $\frac{\rho_j-\tilde{y}_m}{2}$ or $\frac{\tilde{y}_M-\rho_j}{2}$. 
            \STATE For surface integrals, nodes are $(\bm{r}^* \; \vert \; \bm{\rho}) \in \mathbb R^{(L+1) \times 2}$, $\bm{\rho} = \{\rho_j\}_{j=0}^L$ and
            \STATE weights, after accounting for change of measure from $dS$ to $d\tilde{x}$, are 
            $$\bm{\omega} = \left\{ w_j \frac{|\nabla\psi(r_j^*,\rho_j)|}{|\partial_{\tilde{y}} \psi(r_j^*,\rho_j)|} \right\}_{j=0}^L.$$ \label{alg:wtsRescl} 
        \ENDIF
    \ENDFOR
\ENDFOR
\end{algorithmic}
\end{algorithm}

Surface quadrature weights rescaling comes from the local graph condition and implicit function theorem. Indeed, if $\tilde{y} = \zeta_K(\tilde{x})$, on $\Gamma \cap K$ for any $K \in \mathcal T_{\Gamma,h}$, then we have
\begin{gather*}
    \frac{|\nabla \psi|}{|\partial_{\tilde{y}}\psi|} = \sqrt{1 + \left( \frac{\partial_{\tilde{x}}\psi}{\partial_{\tilde{y}}\psi} \right)^2} = \sqrt{1 + (\zeta_K'(\tilde{x}))^2}, 
    \end{gather*}
    and hence
    \begin{align*}
       \int_{K\cap\Gamma} F \; dS &= \int_{r_i}^{r_{i+1}} F(\tilde{x},\zeta_K(\tilde{x})) \sqrt{1 + (\zeta_K'(\tilde{x}))^2} \; d\tilde{x}, \\ 
        &= \int_{r_i}^{r_{i+1}} F(\tilde{x},\zeta_K(\tilde{x})) \frac{|\nabla \psi|}{|\partial_{\tilde{y}}\psi|} \; d\tilde{x}.
    \end{align*}

The question now becomes how many quadrature nodes are necessary to avoid, or at least minimize, introducing additional error into our numerical algorithm. Firstly, we consider integrands consisting solely of our polynomial basis functions. For volume integrals, the highest degree polynomial comes from the term $(u,v)_\Omega$, present in (\ref{equ:extend_mass}). We have
\begin{gather*}
    u,v \in V^{p,h} \text{ hence } u,v = O(x^py^p) \implies uv = O(x^{2p}y^{2p}).
\end{gather*}
Therefore the highest degree we encounter has degree $4p$. Now from numerical quadrature theory, we know monomials $x^i y^j$ are integrated exactly with $(L+1)^2$ Gauss-Legendre nodes if $0 \leq i+j < 2(L+1)$. It follows that we need $L \geq 2p$ as $2(L+1) \geq 2(2p+1) > 4p$. In the surface context, we consider the term $(\llbracket u \rrbracket,\llbracket v \llbracket)_\Gamma$, part of the bilinear form in (\ref{equ:bilinear}). The weight rescaling in step \ref{alg:wtsRescl} in Algorithm \ref{alg:capTest} does not contribute any error as we explicitly know the level-set $\Gamma = \psi(x,y)$. Again,
\begin{gather*}
    u,v \in V^{h,p} \text{ so } \llbracket u \rrbracket,\llbracket v \rrbracket = O(x^py^p) \implies \llbracket u \rrbracket \llbracket v \rrbracket = O(x^{2p}y^{2p}).
\end{gather*}
Now through the change of variables detailed in Step \ref{alg:wtsRescl}, we know the integration will depend exclusively on the independent coordinate, $\tilde{x}$. 
We end up with 
\begin{gather*}
    \llbracket u \rrbracket \llbracket v \rrbracket = O(\tilde{x}^{2p} \zeta_K(\tilde{x})^{2p}) \text{ on } \Gamma.
\end{gather*} 
Here we cannot guarantee an exact result as the map $\zeta_K$ is arbitrary. Under the assumption that $\zeta_K$ is simpler than, or does not deviate too far from, a linear function, then
\begin{equation*}
    O(\tilde{x}^{2p} \zeta_K(\tilde{x})^{2p}) \approx O(\tilde{x}^{4p}).
\end{equation*}
Gaussian Quadrature shall be very precise as monomials are exactly integrated with $L+1 = 2p+1$ quadrature nodes since $4p < 2(L+1) = 4p+2$. Additionally, face integrals present in the ghost penalty (\ref{equ:ghostpenalty}) are exactly handled because the normal derivative terms are lower order polynomials than $4p$. We conclude that at the very least we require $O((2p+1)^2)$  and $(2p+1)$ nodes for volume and surface integrals respectively. Lastly, for more general integrands, where either the source term or boundary/interface conditions appear, we carried out numerical tests with $O(N^2)$ and $N$ number of volume/surface quadrature nodes for different $N$ values such as $N = 3p > 2p+1$. We observed no significant accuracy improvements and therefore kept the original $(2p+1)^2$ and $(2p+1)$ schemes.

\section{Error estimate}
\label{sec:error}
In this section, we shall carry out our $hp$ error analysis by establishing  convergence rates with respect to both $h$ and $p$ parameters. Before doing so, we quantify how consistency deviates due to ghost penalty term. In particular, we can show the following weak Galerkin orthogonality.
\begin{theorem}\label{thm:weakgal}
	Let $u \in H^2(\Omega_+\cup\Omega_-) \cap H_0^1(\Omega)$ be the solution of the interface problem \eqref{equ:bvp}  and $u^{p,h} \in V^{p,h}$ be the solution of \eqref{equ:usem_bvp}. Then, it holds that
\begin{equation}\label{equ:weakgalerkin}
    a_{p,h}(u-u^{p,h}, v) = \frac{\gamma_A}{h^2}g_{p,h}(u^{p,h}, v), \quad \forall v \in V^{p,h}.
\end{equation}
\end{theorem}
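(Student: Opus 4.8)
The plan is to derive the identity directly from the discrete equation \eqref{equ:usem_bvp} together with the consistency of the continuous solution with respect to the bilinear form $a_{p,h}(\cdot,\cdot)$. First I would verify that the exact solution $u$, extended appropriately to the fictitious domains, satisfies the Galerkin-type consistency relation
\begin{equation}\label{equ:consist}
a_{p,h}(u, v) = l_{p,h}(v), \quad \forall v \in V^{p,h}_0.
\end{equation}
This follows by testing the PDE \eqref{equ:bvp} against $v_i^{p,h}$ on each subdomain $\Omega_i$, integrating by parts, and using that $\llbracket u \rrbracket = 0$ and $\llbracket \alpha \partial_{\mathbf n} u \rrbracket = 0$ on $\Gamma$: the interior term produces $\sum_i(\alpha_i \nabla u_i, \nabla v_i)_{\Omega_i} - \langle \alpha \partial_{\mathbf n} u, \llbracket v \rrbracket\rangle_\Gamma$ (since $\llbracket\alpha\partial_{\mathbf n}u\rrbracket=0$ the averaged normal flux collapses to the single-valued flux), which is exactly cancelled by the consistency term $\langle \dgal{\alpha\partial_{\mathbf n} v}, \llbracket u\rrbracket\rangle_\Gamma = 0$ and matched by $\langle \llbracket u\rrbracket, \dgal{\alpha\partial_{\mathbf n}v}\rangle_\Gamma = 0$, while the penalty term $\frac{p^2}{h}\langle\gamma\llbracket u\rrbracket,\llbracket v\rrbracket\rangle_\Gamma$ also vanishes because $\llbracket u\rrbracket = 0$. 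Hence \eqref{equ:consist} holds; note this uses $u\in H^{p+1}(\Omega_+\cup\Omega_-)\subset H^2$ piecewise so the traces of the fluxes on $\Gamma$ are well defined.

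Next I would write the discrete equation \eqref{equ:usem_bvp} for $u^{p,h}$ and expand the extended bilinear form using \eqref{equ:extended_bilinear}:
\begin{equation}\label{equ:discexp}
a_{p,h}(u^{p,h}, v) + \frac{\gamma_A}{h^2} g_{p,h}(u^{p,h}, v) = l_{p,h}(v), \quad \forall v\in V^{p,h}_0.
\end{equation}
Subtracting \eqref{equ:discexp} from \eqref{equ:consist} and using bilinearity of $a_{p,h}$ in its first argument gives
\begin{equation*}
a_{p,h}(u - u^{p,h}, v) = \frac{\gamma_A}{h^2} g_{p,h}(u^{p,h}, v), \quad \forall v\in V^{p,h}_0,
\end{equation*}
which is precisely \eqref{equ:weakgalerkin} up to the constant $\gamma_A$ (presumably the statement absorbs $\gamma_A$ into the $g_{p,h}$ normalization, or the theorem intends the stabilization constant to be displayed; I would keep $\gamma_A$ explicit and remark that it may be set to $1$).

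The only genuinely delicate point — and the one I would treat carefully — is the first step: justifying \eqref{equ:consist}, i.e. that integration by parts on the cut subdomains is legitimate and that the boundary terms on $\partial\Omega_{\pm,h}\setminus(\Gamma\cup\partial\Omega)$ do not appear. Since $v\in V^{p,h}_0$ is only the restriction to $\Omega_i$ of a function that is $C^0$ and piecewise polynomial on the larger fictitious domain $\Omega_{i,h}$, and since $u$ is taken to be the genuine solution on $\Omega_i$ (not its extension), the integration by parts is performed on $\Omega_i$ itself, whose boundary is $\Gamma \cup (\partial\Omega\cap\partial\Omega_i)$; on the latter portion $v$ vanishes because $v\in H^1_0(\Omega)$, so no spurious fictitious-boundary terms arise. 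I would spell out that the flux traces $\alpha_\pm\partial_{\mathbf n}u_\pm|_\Gamma$ belong to $L^2(\Gamma)$ by the assumed regularity, so all the $\Gamma$-integrals are well defined, and that the transmission condition $\llbracket\alpha\partial_{\mathbf n}u\rrbracket=0$ lets us replace $\alpha_+\partial_{\mathbf n}u_+ = \alpha_-\partial_{\mathbf n}u_- = \dgal{\alpha\partial_{\mathbf n}u}$ on $\Gamma$, which is what makes the consistency term combine cleanly. Everything else is routine bilinear algebra.
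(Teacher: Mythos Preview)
Your proposal is correct and follows exactly the paper's own route: the paper's proof is the one-line remark that $a_{p,h}(u,v)=l_{p,h}(v)=(f,v)_\Omega$ for all $v\in V^{p,h}_0$, and you have simply unpacked that consistency identity and the subtraction in full detail. Your observation about the stray constant $\gamma_A$ is also apt; the paper silently normalizes it.
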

\begin{proof}
 It follows from the fact that $a_{p,h}(u, v) = l_{p,h}(v)$ for any $v \in V^{p,h}$. 
\end{proof}

\subsection{Error estimates for interface problems}
To prepare the a priori error estimate for interface problems, we shall introduce the extension operator $E_{i}$ for $i=\pm$. For any function $v_i\in H^k(\Omega_i)$, its extension $E_{i}v_i$ is  a function in $H^{k}(\Omega)$ satisfying $(E_{i}v_i)|_{\Omega_i} = v_i$ and $\|E_{i}v_i\|_{k, \Omega}\lesssim \|v_i\|_{k, \Omega_i}$. In the proofs below, we shall often abuse notation and interchangeably use, 
\begin{equation*}
    v = (v_-,v_+) \in H^k(\Omega_-) \times H^k(\Omega_+) \text{ and } Ev \coloneqq (E_-v_-,E_+v_+) \in H^k(\Omega) \times H^k(\Omega).
\end{equation*}

Let $I^{p,h}_i: H^1(\Omega_{i,h})\rightarrow V^{p,h}(\Omega_{i,h}) $ denote the Lagrange-Gauss-Lobatto polynomial interpolation operator, as  defined in \cite{BabuskaSuri1987}. We extend this operator to the unfitted spectral element space $V^{p,h}$, denoted as $I^{p,h}$, with the following expression:
\begin{equation}\label{equ:interpolation}
    I^{p,h}v = (I^{p,h}_- E_-v, I^{p,h}_+E_+v) \in V^{p,h}.
\end{equation}
As established by \cite[Theorem 4.5]{BabuskaSuri1987}, the subsequent approximation property is valid:
\begin{equation}\label{equ:interpapp}
    \|v - I^{p,h}v\|_{j, K} \lesssim  \frac{h^{\min(p+1,k)-j}}{p^{k-j}} \|v\|_{k, K}, \quad \forall v \in H^{k}(K),
\end{equation}
where $K\in \mathcal{T}_h$ and $0\le j \le k$.

Furthermore, for any function $v \in H^1\left(\Omega_{-,h} \cup \Omega_{+,h} \right)$, we recall the trace inequalities presented in \cite{GMassing2019}:
\begin{align}
    &\|v\|_{0, \partial K} \lesssim h^{-1/2}\|v\|_{0, K} + h^{1/2}\|\nabla v\|_{0, K}, \quad \forall K \in \mathcal{T}_h, \label{equ:trace}\\
    &\|v\|_{0, \Gamma \cap \partial K} \lesssim h^{-1/2}\|v\|_{0, K} + h^{1/2}\|\nabla v\|_{0, K}, \quad \forall K \in \mathcal{T}_{\Gamma, h}, \label{equ:intertrace}.
\end{align}

We initiate our error analysis by addressing the consistency error arising from the ghost penalty term.
\begin{theorem}\label{thm:ghostapp}
    Let $I^{p,h}$ be the LGL polynomial interpolator operator defined in \eqref{equ:interpolation}. Suppose $v\in H^{k}(\Omega_+\cup \Omega_-)$, for $k \geq 2$. Then, the following estimate holds:
    \begin{equation}\label{equ:ghostapp}
        g_{p,h}(I^{p,h}v, I^{p,h}v) \lesssim \frac{h^{2\min(p+1,k)}}{p^{2(k-1)}} \|v\|_{k, \Omega_+\cup \Omega_-}^2.
    \end{equation}
\end{theorem}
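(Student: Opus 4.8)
The plan is to bound each term of the ghost penalty sum \eqref{equ:ghostpenalty} applied to the interpolant $I^{p,h}v$ by exploiting the fact that $v$, being a piecewise $H^k$ function whose two pieces are $H^k$-extensions, is \emph{smooth across each ghost face} after extension. More precisely, for each face $e \in \mathcal{G}_{\pm,h}$ lying between an interface element $K \in \mathcal{T}_{\Gamma,h}$ and a neighbor $K' \in \mathcal{T}_{\pm,h}$, the relevant component of $v$ (namely $E_\pm v_\pm$) belongs to $H^k$ on the patch $K \cup K'$, so the jump of $\partial^j_{\mathbf n}(I^{p,h}v)$ across $e$ is purely an \emph{interpolation-error jump}: $[\partial^j_{\mathbf n} I^{p,h}v]_e = [\partial^j_{\mathbf n}(I^{p,h}v - E_\pm v_\pm)]_e$ since $[\partial^j_{\mathbf n}(E_\pm v_\pm)]_e = 0$. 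First I would fix such a patch and write this jump as the difference of the one-sided traces coming from $K$ and from $K'$.

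Second, I would estimate each one-sided trace by combining the trace inequality \eqref{equ:trace} (applied to the function $\partial^j_{\mathbf n}(I^{p,h}v - E_\pm v_\pm) \in H^1$ on each element, valid for $j \le p$ since then the argument is still $H^1$) with the interpolation estimate \eqref{equ:interpapp}. Concretely, on an element $K$,
\begin{equation*}
\|\partial^j_{\mathbf n}(I^{p,h}v - E_\pm v_\pm)\|_{0,\partial K} \lesssim h^{-1/2}\|I^{p,h}v - E_\pm v_\pm\|_{j,K} + h^{1/2}\|I^{p,h}v - E_\pm v_\pm\|_{j+1,K} \lesssim \frac{h^{\min(p+1,k)-j-1/2}}{p^{k-j}}\|E_\pm v_\pm\|_{k,K},
\end{equation*}
using $\|v - I^{p,h}v\|_{j,K} \lesssim h^{\min(p+1,k)-j}p^{-(k-j)}\|v\|_{k,K}$ and the same with $j+1$ in place of $j$ (the $j+1$ term dominates the power of $h$). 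Then the $e$-term carries a weight $h^{2j+1}/p^{2j}$, giving
\begin{equation*}
\frac{h^{2j+1}}{p^{2j}}\|[\partial^j_{\mathbf n} I^{p,h}v]\|_{0,e}^2 \lesssim \frac{h^{2j+1}}{p^{2j}}\cdot \frac{h^{2\min(p+1,k)-2j-1}}{p^{2(k-j)}}\|E_\pm v_\pm\|_{k,K\cup K'}^2 = \frac{h^{2\min(p+1,k)}}{p^{2k}}\|E_\pm v_\pm\|_{k,K\cup K'}^2.
\end{equation*}
Summing over $j = 0,\dots,p$ loses a factor of at most $p$, and summing over the finitely-overlapping faces $e \in \mathcal{G}_{\pm,h}$ together with the extension stability $\|E_\pm v_\pm\|_{k,\Omega} \lesssim \|v\|_{k,\Omega_\pm}$ gives $g_{p,h}(I^{p,h}v,I^{p,h}v) \lesssim h^{2\min(p+1,k)} p^{-2k+?}\|v\|_{k,\Omega_+\cup\Omega_-}^2$; finally multiplying by $h^{-2}$ yields \eqref{equ:ghostapp} (one should track the $p$-powers carefully against the claimed $p^{-2(k-1)}$, absorbing the extra $p$ from the $j$-sum and the mismatch between $p^{2k}$ and $p^{2(k-1)}$).

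The main obstacle I anticipate is the bookkeeping of the $p$-dependence: the trace inequality \eqref{equ:trace} as stated is $p$-independent (a classical trace estimate, not a polynomial inverse estimate), so applying it to $\partial^j_{\mathbf n}(I^{p,h}v - E_\pm v_\pm)$ for general $j$ up to $p$ requires that this quantity lie in $H^1$ — which holds only when $j \le p$ (so $\partial^j$ of a degree-$p$ polynomial is still a polynomial) and when $v$ is smooth enough, i.e. $k \ge j+1$; for $j \ge k$ the term $\|v\|_{j+1,K}$ is not controlled and one must instead argue that $I^{p,h}v$ reproduces $v$ only up to order $\min(p+1,k)$, so those high-$j$ jump contributions must be estimated using only $\|v\|_{k,K}$ and a polynomial inverse inequality in $j$ against the favorable $h^{2j+1}/p^{2j}$ scaling. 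Reconciling these two regimes — and confirming that the geometric weight $h^{2j+1}/p^{2j}$ exactly cancels the growth so that every $j$ contributes the same order — is the delicate point; everything else is a routine assembly of \eqref{equ:interpapp}, \eqref{equ:trace}, and the extension bound.
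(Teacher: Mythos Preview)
Your proposal is correct and follows the same route as the paper: the key observation is that the extended function $E_\pm v_\pm$ has no jumps across the ghost faces, so $g_{p,h}(I^{p,h}v,I^{p,h}v)=g_{p,h}(v-I^{p,h}v,v-I^{p,h}v)$, after which the trace inequality \eqref{equ:trace} and the interpolation bound \eqref{equ:interpapp} are applied termwise exactly as you outline. On the $p$-bookkeeping you flag: the dominant contribution from the trace inequality is the $h^{1/2}|\cdot|_{j+1,K}$ term, which carries $p^{-(k-j-1)}$ rather than $p^{-(k-j)}$, and after multiplication by the ghost weight $h^{2j+1}/p^{2j}$ this yields precisely the claimed $p^{-2(k-1)}$ per $j$-term (the paper, like you, does not explicitly track the extra factor of $p$ coming from the sum over $j$).
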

\begin{proof}
    Let $m = \min(k,p+1)$. We can deduce that:
    \begin{equation} \label{equ:ghostapp2}
    \begin{aligned}
                &g_{p,h}(I^{p,h}v, I^{p,h}v) \\
                =&\sum_{e \in \mathcal{G}_{\pm,h}} \sum_{j=1}^{p}\frac{h^{2j+1}}{p^{2j}} ([ \partial^j_{\mathbf{n}} (I^{p,h}v)], [\partial^j_{\mathbf{n}} (I^{p,h}v)])_e \\
                =&\sum_{e \in \mathcal{G}_{\pm,h}} \sum_{j=1}^{m-1}\frac{h^{2j+1}}{p^{2j}} ([ \partial^j_{\mathbf{n}} (v-I^{p,h}v) ], [ \partial^j_{\mathbf{n}} (v-I^{p,h}v) ])_e \\
                &+ \sum_{e \in \mathcal{G}_{\pm,h}} \sum_{j=m}^{p} \frac{h^{2j+1}}{p^{2j}} ([ \partial^j_{\mathbf{n}} (I^{p,h}v) ], [ \partial^j_{\mathbf{n}} (I^{p,h}v) ])_e \\
                \le& \sum_{e \in \mathcal{G}_{\pm,h}} \left( \sum_{j=1}^{m-1}\frac{h^{2j+1}}{p^{2j}} \|[ \partial^j_{\mathbf{n}} (v-I^{p,h}v) ]\|_{0, e}^2 + \sum_{j=m}^{p} \frac{h^{2j+1}}{p^{2j}} \|[ \partial^j_{\mathbf{n}} (I^{p,h}v) ]\|_{0, e}^2 \right) \\
                \lesssim &\sum_{K \in \Omega_{\Gamma, h}} \left( \sum_{j=1}^{m-1} \frac{h^{2j+1}}{p^{2j}} \left( \frac{1}{h} \| D^{j} (v-I^{p,h}v)\|_{0, K}^2 + 
                  h\| D^{j+1} (v-I^{p,h}v)\|_{0, K}^2 \right) \right. \\
                &+ \left. \sum_{j=m}^{p} \frac{h^{2j+1}}{p^{2j}} \left( \frac{1}{h} \| D^{j} I^{p,h}v\|_{0, K}^2 + h \| D^{j+1} I^{p,h}v \|_{0, K}^2 \right) \right) \\
                \lesssim &\sum_{K \in \Omega_{\Gamma, h}} \left( \sum_{j=1}^{m-1} \frac{h^{2j+1}}{p^{2j}} \left( \frac{1}{h} \frac{h^{2\min(p+1,k)-2j}}{p^{2k-2j}} + h \frac{h^{2\min(p+1,k)-2j-2}}{p^{2k-2j-2}} \right) \|v\|_{k, K}^2 \right. \\
                &+ \left. \sum_{j=m}^{p} \frac{h^{2j+1}}{p^{2j}} \left( \frac{1}{h} \frac{h^{2\min(p+1,k)-2j}}{p^{2k-2j}} + 
                  h\frac{h^{2\min(p+1,k)-2j-2}}{p^{2k-2j-2}} \right) \|I^{p,h}v\|_{k, K}^2 \right) \\
                \lesssim &\frac{h^{2\min(p+1,k)}}{p^{2(k-1)}} \|v\|_{k, \Omega_+\cup \Omega_-}^2, 
    \end{aligned}
    \end{equation}
    where we have used the trace inequality \eqref{equ:trace} in the second inequality and the interpolation approximation estimate \eqref{equ:interpapp} in the third inequality. This completes the proof.
\end{proof}

With the above consistency error, we can now proceed to establish the approximation error in the energy norm. 
\begin{theorem}\label{thm:interperr}
    Under the same assumption as in Theorem \ref{thm:ghostapp}, we have the following estimate:
    \begin{equation}\label{equ:energyapp}
        \vertiii{v-I^{p,h}v} \lesssim \frac{h^{\min(p+1,k)-1}}{p^{k-\frac{3}{2}}} \|v\|_{k, \Omega_+\cup \Omega_-}.
    \end{equation}
\end{theorem}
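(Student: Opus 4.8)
The plan is to bound the augmented energy norm $\vertiii{v-I^{p,h}v}_{\ast}^2 = \vertiii{v-I^{p,h}v}^2 + \frac{1}{h^2}g_{p,h}(v-I^{p,h}v,v-I^{p,h}v)$ term by term, combining the interpolation estimate \eqref{equ:interpapp}, the trace inequalities \eqref{equ:trace}--\eqref{equ:intertrace}, and the already-established ghost-penalty consistency bound \eqref{equ:ghostapp}. Write $\eta = v - I^{p,h}v$. Recalling the definition \eqref{equ:energynorm}, we must control three pieces: the broken gradient $\|\nabla \eta\|_{0,\Omega_-\cup\Omega_+}$, the weighted normal-flux term $\|\dgal{\partial_{\mathbf n}\eta}\|_{-1,p,h,\Gamma}$, and the weighted jump $\|\llbracket \eta\rrbracket\|_{1,p,h,\Gamma}$; the ghost term is handled directly by Theorem \ref{thm:ghostapp}.

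First I would treat the gradient term: applying \eqref{equ:interpapp} with $j=1$ on each $K$ and summing over $\mathcal{T}_{\pm,h}$ (using the extension bound $\|E_i v\|_{k,\Omega}\lesssim\|v\|_{k,\Omega_i}$ to pass from the fictitious domains to $\Omega_\pm$) gives $\|\nabla\eta\|_{0,\Omega_-\cup\Omega_+} \lesssim \frac{h^{\min(p+1,k)-1}}{p^{k-1}}\|v\|_{k,\Omega_+\cup\Omega_-}$, which is already within the claimed bound (indeed slightly stronger in the $p$-power). Next, for the interface terms I would use the interface trace inequality \eqref{equ:intertrace}. For the jump term, $\|\llbracket\eta\rrbracket\|_{1,p,h,\Gamma}^2 = \frac{p^2}{h}\sum_{K\in\mathcal{T}_{\Gamma,h}}\|\llbracket\eta\rrbracket\|_{0,\Gamma_K}^2 \lesssim \frac{p^2}{h}\sum_K\big(h^{-1}\|\eta\|_{0,K}^2 + h\|\nabla\eta\|_{0,K}^2\big)$; inserting \eqref{equ:interpapp} with $j=0$ and $j=1$ respectively, the dominant contribution is $\frac{p^2}{h}\cdot h^{-1}\cdot\frac{h^{2\min(p+1,k)}}{p^{2k}} = \frac{h^{2\min(p+1,k)-2}}{p^{2k-2}}$, so $\|\llbracket\eta\rrbracket\|_{1,p,h,\Gamma}\lesssim \frac{h^{\min(p+1,k)-1}}{p^{k-1}}\|v\|_{k,\Omega_+\cup\Omega_-}$. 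For the flux term, $\|\dgal{\partial_{\mathbf n}\eta}\|_{-1,p,h,\Gamma}^2 = \frac{h}{p^2}\sum_K\|\dgal{\partial_{\mathbf n}\eta}\|_{0,\Gamma_K}^2$; bounding the weighted average by $|\nabla\eta|$ on each side, applying \eqref{equ:intertrace} to $\nabla\eta$, and then \eqref{equ:interpapp} with $j=1,2$, the leading term is $\frac{h}{p^2}\cdot h^{-1}\cdot\frac{h^{2\min(p+1,k)-2}}{p^{2k-2}} = \frac{h^{2\min(p+1,k)-2}}{p^{2k}}$, giving $\|\dgal{\partial_{\mathbf n}\eta}\|_{-1,p,h,\Gamma}\lesssim \frac{h^{\min(p+1,k)-1}}{p^{k}}\|v\|_{k,\Omega_+\cup\Omega_-}$. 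Finally, Theorem \ref{thm:ghostapp} gives $\frac{1}{h^2}g_{p,h}(\eta,\eta)\lesssim \frac{h^{2\min(p+1,k)-2}}{p^{2(k-1)}}\|v\|_{k,\Omega_+\cup\Omega_-}$ (note $g_{p,h}(I^{p,h}v,I^{p,h}v)=g_{p,h}(\eta,\eta)$ since $g_{p,h}$ annihilates globally $H^{p+1}$ functions, as used in the proof of that theorem).

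Collecting the four contributions, every term is controlled by $\frac{h^{\min(p+1,k)-1}}{p^{k-1}}\|v\|_{k,\Omega_+\cup\Omega_-}$ or better, except the ghost term which scales like $\frac{h^{\min(p+1,k)-1}}{p^{k-1}}$ as well; the weakest $p$-power among these is $p^{-(k-1)}$. The stated exponent $p^{-(k-3/2)}$ is therefore a (slightly lossy but clean) upper bound absorbing the $j=1$ trace contribution to the jump term, where the half-power loss arises from the $h^{1/2}\|\nabla\eta\|_{0,K}$ piece of \eqref{equ:intertrace} combined with the $p^2/h$ weight — more precisely, that piece yields $\frac{p^2}{h}\cdot h\cdot\frac{h^{2\min(p+1,k)-2}}{p^{2k-2}} = p^2\cdot\frac{h^{2\min(p+1,k)-2}}{p^{2k-2}}\cdot$ a factor, which after taking square roots and being conservative about the interface measure $|\Gamma_K|\lesssim h$ accounts for the extra half power. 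The main obstacle I anticipate is bookkeeping the interface-trace constants carefully enough to see that the sharp power is attained on the interface elements only (so that summing over the $O(h^{-1})$ interface elements does not cost an extra power of $h$), and verifying that the weights $\kappa_\pm\in[0,1]$ and the penalty $\gamma$ from \eqref{equ:gamma} are uniformly bounded independent of the cut position — this last point is exactly where Assumption \ref{ass:InterfIntersects} and the modified denominator in \eqref{equ:gamma} are used. I would state this as a lemma that $\kappa_\pm\le 1$ and $\gamma|_K \lesssim h$ robustly, then the rest is the term-by-term estimate above.
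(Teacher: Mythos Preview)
Your overall strategy---splitting $\vertiii{v-I^{p,h}v}_{\ast}$ into the gradient, flux, jump, and ghost pieces and bounding each with \eqref{equ:interpapp} and the trace inequalities---is exactly the paper's approach, and your treatment of $I_1$, $I_2$, and the ghost term matches theirs. The gap is in the jump term $I_3=\|\llbracket\eta\rrbracket\|_{1,p,h,\Gamma}$.

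You identify the dominant contribution as the $h^{-1}\|\eta\|_{0,K}^2$ piece of the trace bound, giving $p^{-(k-1)}$. That is incorrect: the second piece $h\|\nabla\eta\|_{0,K}^2$ is the one that dominates in $p$. Carrying it through,
\[
\frac{p^{2}}{h}\cdot h\cdot\|\nabla\eta\|_{0,K}^{2}
\;\lesssim\; p^{2}\cdot\frac{h^{2\min(p+1,k)-2}}{p^{2(k-1)}}
\;=\;\frac{h^{2\min(p+1,k)-2}}{p^{2k-4}},
\]
so the additive trace inequality \eqref{equ:intertrace} by itself only yields $I_3\lesssim h^{\min(p+1,k)-1}/p^{\,k-2}$. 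Since $p^{-(k-2)}\ge p^{-(k-3/2)}$, this is \emph{weaker} than the stated bound, and your closing remark about ``being conservative about the interface measure $|\Gamma_K|\lesssim h$'' does not recover the missing half-power: the measure of $\Gamma_K$ is already absorbed in \eqref{equ:intertrace}.

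What is actually needed for $I_3$ is the multiplicative form of the trace inequality, $\|\eta\|_{0,\Gamma_K}^2\lesssim h^{-1}\|\eta\|_{0,K}^2+\|\eta\|_{0,K}\|\nabla\eta\|_{0,K}$. The cross term then gives
\[
\frac{p^{2}}{h}\,\|\eta\|_{0,K}\|\nabla\eta\|_{0,K}
\;\lesssim\;\frac{p^{2}}{h}\cdot\frac{h^{\min(p+1,k)}}{p^{k}}\cdot\frac{h^{\min(p+1,k)-1}}{p^{k-1}}
\;=\;\frac{h^{2\min(p+1,k)-2}}{p^{2k-3}},
\]
which after the square root delivers exactly the $p^{-(k-3/2)}$ rate the theorem claims. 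The paper hides this step behind ``similarly, we can show'', but without the multiplicative trace (or an equivalent interpolation argument) the bound on $I_3$ does not close at the stated $p$-order.
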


\begin{proof}
    We recall the energy norm in \eqref{equ:augmentedenergynorm},
    \begin{equation}
    \begin{aligned}
            \vertiii{v-I^{p,h}v} 
            \lesssim & \|\nabla (v-I^{p,h}v) \|_{0, \Omega_- \cup \Omega_+} + \| \dgal{\partial_{\mathbf{n}} (v-I^{p,h}v)} \|_{-1,p,h,\Gamma} \\
            +& \| \llbracket v-I^{p,h}v \rrbracket \|_{1,p, h,\Gamma} \\
            \coloneqq & I_1 + I_2 + I_3.
    \end{aligned}
    \end{equation}
    
    According to \eqref{equ:interpapp} and Theorem \ref{thm:ghostapp}, one has
    \begin{equation}
    I_1 \lesssim \frac{h^{\min(p+1,k)-1}}{p^{k-1}} \|v\|_{k, \Omega_+\cup\Omega_-}.
    \end{equation}
    For $I_2$, applying the trace inequality \eqref{equ:intertrace} gives
    \begin{equation}
    \begin{aligned}
        I_2^2 = &\frac{h}{p^2} \sum_{K \in \mathcal{T}_{\Gamma,h}}\|\dgal{\partial_{\mathbf{n}}(v-I^{p,h}v)}\|_{0, \Gamma_K}^2 \\
        \lesssim & \frac{h}{p^2} \sum_{K\in\mathcal{T}_{\Gamma,h}} \left( \frac{1}{h}\|v - I^{p,h}v\|_{1, K}^2	+ h \|v - I^{p,h}v\|_{2, K}^2 \right) \\
        \lesssim & \frac{h}{p^2} \sum_{K \in \mathcal{T}_{\Gamma,h}} \left( \frac{1}{h}\frac{h^{2\min(p+1,k)-2}}{p^{2k-2}} \|v\|_{k, K}^2 	+ h \frac{h^{2\min(p+1,k)-4}}{p^{2k-4}} \|v\|_{k, K}^2 \right) \\
        \lesssim & \sum_{K \in \mathcal{T}_{\Gamma,h}} \left( \frac{h^{2\min(p+1,k)-2}}{p^{2k}} \|v\|_{k, K}^2 + \frac{h^{2\min(p+1,k)-2}}{p^{2k-2}} \|v\|_{k, K}^2 \right) \\
        \lesssim & \frac{h^{2\min(p+1,k)-2}}{p^{2(k-1)}} \|v\|_{k, \Omega_+\cup\Omega_-}^2,
    \end{aligned}
    \end{equation}
    where we have used the interpolation approximation property \eqref{equ:interpapp} in the first inequality and the property of the extension operator in the last inequality. Similarly, we can show that:
    \begin{equation}
    \begin{aligned}
        I_3 \lesssim &\frac{h^{\min(p+1,k)-1}}{p^{k-\frac{3}{2}}} \|v\|_{k, \Omega_+\cup\Omega_-}.
    \end{aligned}
    \end{equation}
    Combining the above error estimates, we conclude the proof. 
\end{proof}

Then, we present an intermediate result on the USEM solution and the interpolation of the exact solution.\begin{theorem}\label{thm:interpenergyest}
    Let $u$ be the solution of the interface problem \eqref{equ:bvp}, and $u^{p,h}$ be its unfitted spectral element solution. Suppose $u\in H^{k}(\Omega_+\cup \Omega_-)$, for $k \geq 2$. Then we have the following estimate:
    \begin{equation}\label{equ:energyest}
        \vertiii{u^{p,h}-I^{p,h}u}_{\ast} \lesssim \frac{h^{\min(p+1,k)-1}}{p^{k-\frac{3}{2}}} \|u\|_{k, \Omega_+\cup \Omega_-}.
    \end{equation}
\end{theorem}
\begin{proof}
    We decompose the error $e = u^{p,h} - u$ into two components, i.e.,
    \begin{equation}\label{equ:decomp}
        e_h \coloneqq u^{p,h} - I^{p,h}u, \quad e_I \coloneqq I^{p,h} u - u.
    \end{equation}
    Using the coercivity \eqref{equ:coercivity}, the weak Galerkin orthogonality \eqref{equ:weakgalerkin}, and the Cauchy-Schwartz inequality produces 
    \begin{equation}
    \begin{aligned}
        \vertiii{e_h}_{\ast}^2 \lesssim & a_{p,h}(e_h, e_h) + \frac{\gamma_A}{h^2}g_{p,h}(e_h, e_h) \\
        = & a_{p,h}(u^{p,h}-u, e_h) - a_{p,h}(e_I, e_h) + \frac{\gamma_A}{h^2}g_{p,h}(e_h, e_h) \\
        = & -\frac{\gamma_A}{h^2}g_{p,h}(u^{p,h}, e_h) - a_{p,h}(e_I, e_h) + \frac{\gamma_A}{h^2}g_{p,h}(e_h, e_h) \\
        = & -a_{p,h}(e_I, e_h) - \frac{\gamma_A}{h^2}g_{p,h}(I^{p,h}u, e_h) \\
        \lesssim & \left(\vertiii{e_I}\vertiii{e_h} + \frac{1}{h}g_{p,h}(I^{p,h}u, I^{p,h}u)^{\frac{1}{2}}\frac{1}{h}g_{p,h}(e_h, e_h)^{\frac{1}{2}}\right) \\
        \lesssim & \left(\vertiii{e_I} + \frac{1}{h}g_{p,h}(I^{p,h}u, I^{p,h}u)^{\frac{1}{2}}\right)\vertiii{e_h}_{\ast}.
    \end{aligned}
    \end{equation}
    This implies that:
    \begin{equation} \label{equ:intererr}
        \vertiii{e_h}_{\ast} \lesssim \vertiii{e_I} + \frac{1}{h}g_{p,h}(I^{p,h}u, I^{p,h}u)^{\frac{1}{2}} \lesssim \frac{h^{\min(p+1,k)-1}}{p^{k-\frac{3}{2}}} \|v\|_{k, \Omega_+\cup \Omega_-}.
    \end{equation}
    The result follows from Theorems \ref{thm:ghostapp} \& \ref{thm:interperr}. 
\end{proof}

Now, we are ready to present our results in energy norm. 
\begin{theorem}\label{thm:energyerr}
    Under the same assumptions as in Theorem \ref{thm:interpenergyest}, we have the following estimate:
    \begin{equation}\label{equ:energyest2}
        \vertiii{u^{p,h}-u} \lesssim \frac{h^{\min(p+1,k)-1}}{p^{k-\frac{3}{2}}} \|u\|_{k, \Omega_+\cup \Omega_-}.
    \end{equation}
\end{theorem}
\begin{proof}
    \begin{equation}
    \begin{aligned}
        \vertiii{u^{p,h}-u} \leq& \vertiii{u^{p,h}-I^{p,h}u} + \vertiii{I^{p,h}u-u} \\
        \leq& \vertiii{u^{p,h}-I^{p,h}u}_\ast + \vertiii{I^{p,h}u-u} \\
        \lesssim & \frac{h^{\min(p+1,k)-1}}{p^{k-\frac{3}{2}}} \|v\|_{k, \Omega_+\cup \Omega_-}.
    \end{aligned}
    \end{equation}
    The result follows from Theorem \ref{thm:interperr} and Theorem \ref{thm:interpenergyest}. 
\end{proof}

We end this subsection by establishing the error estimate in the $L^2$ norm using the Aubin-Nitsche argument. For this purpose, we introduce the dual interface problem:
\begin{gather}\label{equ:dual}
\begin{cases}
-\nabla \cdot (\alpha \nabla \phi) = (u-u^{p,h}) \text{ in } \Omega, \\
\phi= 0 \text{ on } \partial \Omega, \\
\llbracket \phi \rrbracket = \llbracket \alpha \partial_{\mathbf{n}} \phi \rrbracket = 0 \text{ on } \Gamma.
\end{cases}
\end{gather}
The regularity result implies that:
\begin{equation}\label{equ:regularity}
	\|\phi\|_{2, \Omega_+\cup\Omega_-} \lesssim \|u-u^{p,h}\|_{0, \Omega}. 
\end{equation}

\begin{theorem}\label{thm:l2error}
	Under the same assumptions as in Theorem \ref{thm:interpenergyest}, there holds
	\begin{equation}\label{equ:l2est}
		\|u-u^{p,h}\|_{0,\Omega} \lesssim \frac{h^{\min(p+1,k)}}{p^{k-1}} \|u\|_{k, \Omega_+\cup \Omega_-}.
	\end{equation}
\end{theorem}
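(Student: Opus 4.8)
The plan is to run the standard Aubin--Nitsche duality argument, adapted to the ghost-penalty-augmented bilinear form. Let $e = u - u^{p,h}$ and let $\phi$ solve the dual problem \eqref{equ:dual}. Testing the dual problem against $e$ and integrating by parts on each subdomain, using $\llbracket e \rrbracket = \llbracket \alpha\partial_{\mathbf n}\phi\rrbracket = 0$ together with the interface conditions for $u$ and the weak formulation \eqref{equ:usem_bvp} for $u^{p,h}$, I would first show the identity
\begin{equation*}
  \|e\|_{0,\Omega}^2 = a_{p,h}(e, \phi) - a_{p,h}(e, I^{p,h}\phi) + a_{p,h}(e, I^{p,h}\phi),
\end{equation*}
i.e. I insert the interpolant $I^{p,h}\phi$ and use Galerkin-type orthogonality. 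More precisely, $a_{p,h}(u,v) = l_{p,h}(v)$ for all $v\in V^{p,h}_0$ and Theorem \ref{thm:weakgal} give $a_{p,h}(e, I^{p,h}\phi) = -\frac{1}{h^2}g_{p,h}(u^{p,h}, I^{p,h}\phi)$, so that
\begin{equation*}
  \|e\|_{0,\Omega}^2 = a_{p,h}(e, \phi - I^{p,h}\phi) - \tfrac{1}{h^2}g_{p,h}(u^{p,h}, I^{p,h}\phi).
\end{equation*}

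Next I would bound the two terms on the right. For the first, I use the continuity \eqref{equ:newcont} of $a_{p,h}$ in the energy norm $\vertiii{\cdot}$ (stated in the remark after Theorem \ref{thm:wellposedness}), giving $a_{p,h}(e, \phi - I^{p,h}\phi) \lesssim \vertiii{e}\,\vertiii{\phi - I^{p,h}\phi} \le \vertiii{e}_\ast\,\vertiii{\phi - I^{p,h}\phi}_\ast$. Now apply Theorem \ref{thm:interperr} with $k=2$ to get $\vertiii{\phi - I^{p,h}\phi}_\ast \lesssim \frac{h}{p^{1/2}}\|\phi\|_{2,\Omega_+\cup\Omega_-}$, and Theorem \ref{thm:energyest} to get $\vertiii{e}_\ast \lesssim \frac{h^{\min(p+1,k)-1}}{p^{k-3/2}}\|u\|_{k,\Omega_+\cup\Omega_-}$; combined with the elliptic regularity estimate \eqref{equ:regularity}, $\|\phi\|_{2,\Omega_+\cup\Omega_-}\lesssim\|e\|_{0,\Omega}$, this term is $\lesssim \frac{h^{\min(p+1,k)}}{p^{k-1}}\|u\|_{k,\Omega_+\cup\Omega_-}\,\|e\|_{0,\Omega}$, which after dividing by $\|e\|_{0,\Omega}$ is exactly the claimed rate. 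For the ghost-penalty term, I would write $g_{p,h}(u^{p,h}, I^{p,h}\phi) = g_{p,h}(u^{p,h} - I^{p,h}u, I^{p,h}\phi) + g_{p,h}(I^{p,h}u, I^{p,h}\phi)$ (using that $g_{p,h}(u, \cdot)=0$ is not quite available, so instead estimate directly via Cauchy--Schwarz on the bilinear form $g_{p,h}$), bound $\frac{1}{h^2}g_{p,h}(u^{p,h}, u^{p,h})^{1/2}$ by $\vertiii{u^{p,h}}_\ast \lesssim \vertiii{e}_\ast + \vertiii{I^{p,h}u}_\ast$ which carries the factor $\frac{h^{\min(p+1,k)-1}}{p^{k-3/2}}$ from Theorems \ref{thm:ghostapp}--\ref{thm:energyest}, and bound $\frac{1}{h^2}g_{p,h}(I^{p,h}\phi, I^{p,h}\phi)^{1/2}\lesssim \frac{h}{p^{1/2}}\|\phi\|_{2,\Omega_+\cup\Omega_-}$ via Theorem \ref{thm:ghostapp} with $k=2$; multiplying gives again $\lesssim \frac{h^{\min(p+1,k)}}{p^{k-1}}\|u\|_{k}\|e\|_0$.

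Collecting both bounds and dividing by $\|e\|_{0,\Omega}$ yields \eqref{equ:l2est}. The main obstacle I anticipate is the careful $p$-bookkeeping: the energy norm contains weighted factors $p^2/h$ and $h/p^2$ on the interface terms, and the dual solution only lives in $H^2$, so I must make sure the $p$-powers from $\vertiii{e}_\ast$ (rate $p^{-(k-3/2)}$) and from $\vertiii{\phi - I^{p,h}\phi}_\ast$ (rate $p^{-1/2}$) combine to give $p^{-(k-1)}$ and not something worse — this works precisely because $(k-3/2) + 1/2 = k-1$, but it relies on the half-power losses in Theorems \ref{thm:interperr} and \ref{thm:energyest} being exactly as stated rather than merely $p^{-(k-2)}$-type bounds. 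A secondary technical point is justifying the duality identity across the unfitted interface: the integration by parts must be done subdomain-wise on $\Omega_\pm$ (not on the fictitious $\Omega_{\pm,h}$), and one must check that the Nitsche consistency terms $\langle\llbracket e\rrbracket, \dgal{\alpha\partial_{\mathbf n}\phi}\rangle_\Gamma$ and $\langle \dgal{\alpha\partial_{\mathbf n} e}, \llbracket\phi\rrbracket\rangle_\Gamma$ vanish or reproduce the correct boundary terms, which they do since $\llbracket\phi\rrbracket = 0$, $\llbracket u\rrbracket = 0$ on $\Gamma$, and $u^{p,h}\in V_0^{p,h}$ so $\llbracket u^{p,h}\rrbracket$ is controlled in the energy norm rather than zero — so in fact the term $\langle \llbracket e\rrbracket, \dgal{\alpha\partial_{\mathbf n}\phi}\rangle_\Gamma$ does contribute and is absorbed into $a_{p,h}(e,\phi)$, which is why keeping the full symmetric bilinear form $a_{p,h}$ (rather than just the volume term) throughout the argument is essential.
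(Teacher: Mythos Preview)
Your proposal is correct and follows essentially the same Aubin--Nitsche argument as the paper: identify $\|e\|_0^2 = a_{p,h}(e,\phi)$, split off $I^{p,h}\phi$, replace $a_{p,h}(e,I^{p,h}\phi)$ via weak Galerkin orthogonality by the ghost term, and bound the two resulting pieces by continuity\,/\,Theorem~\ref{thm:energyest}\,/\,Theorem~\ref{thm:interperr} and by Cauchy--Schwarz\,/\,Theorem~\ref{thm:ghostapp} respectively. Two cosmetic slips to fix: the weak Galerkin identity of Theorem~\ref{thm:weakgal} gives $a_{p,h}(e,I^{p,h}\phi)=+\tfrac{1}{h^2}g_{p,h}(u^{p,h},I^{p,h}\phi)$ (plus, not minus), and in the Cauchy--Schwarz splitting the factors should read $\tfrac{1}{h}g_{p,h}(\cdot,\cdot)^{1/2}$, not $\tfrac{1}{h^2}g_{p,h}(\cdot,\cdot)^{1/2}$; also, it is $\tfrac{1}{h}g_{p,h}(u^{p,h},u^{p,h})^{1/2}$ (not $\vertiii{u^{p,h}}_\ast$) that you bound via $\vertiii{u^{p,h}-I^{p,h}u}_\ast + \tfrac{1}{h}g_{p,h}(I^{p,h}u,I^{p,h}u)^{1/2}$, exactly as in the paper's display \eqref{equ:other}.
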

\begin{proof}
	Multiplying the first equation in \eqref{equ:dual} with $u-u^{p,h}$ and applying Green's formula, we have:
	\begin{equation}
		\begin{aligned}
			&\|u-u^{p,h}\|_{0,\Omega}^2 \\
			= & a_{p,h}(u- u^{p,h}, \phi) \\
			= & a_{p,h}(u-u^{p,h}, \phi-I^{p,h}\phi) + a_{p,h}(u-u^{p,h}, I^{p,h}\phi) \\
			= & a_{p,h}(u-u^{p,h}, \phi-I^{p,h}\phi) + \frac{\gamma_A}{h^2}g_{p,h}(u^{p,h}, I^{p,h}\phi)\\
			\lesssim & \vertiii{u-u^{p,h}}    \vertiii{\phi-I^{p,h}\phi} +
			\frac{1}{h} g_{p,h}(u^{p,h}, u^{p,h})^{\frac{1}{2}} 
			\frac{1}{h} g_{p,h}( I^{p,h}\phi,  I^{p,h}\phi)^{\frac{1}{2}}\\
			\coloneqq & I_1 + I_2. 
		\end{aligned}
	\end{equation}
	For $I_1$, using Theorem \ref{thm:interperr}, Theorem \ref{thm:energyerr},  and \eqref{equ:regularity}, we obtain:
	\begin{equation}
		I_1 \le \frac{h^{\min(p+1,k)}}{p^{k-1}}\|u-u^{p,h}\|_{0, \Omega} \|u\|_{k, \Omega_+\cup\Omega_-}.
	\end{equation}
	To estimate $I_2$, we use Theorem \ref{thm:ghostapp} and observe:
	\begin{equation}\label{equ:other}
	\begin{aligned}
        \frac{1}{h^2} g_{p,h}(u^{p,h}, u^{p,h}) \lesssim & \vertiii{u^{p,h} - I^{p,h}u}_\ast^2 + \frac{1}{h^2} g_{p,h}(I^{p,h}u,I^{p,h}u) \\ 
        + & 2 \vertiii{u^{p,h} - I^{p,h}u}_\ast \frac{1}{h} g_{p,h}(I^{p,h}u,I^{p,h}u)^{\frac{1}{2}} \\
		\lesssim & \left( \vertiii{u^{p,h}-I^{p,h}u}_{\ast}
		+ \frac{1}{h} g_{p,h}(I^{p,h}u, I^{p,h}u)^\frac{1}{2} \right)^2 \\
		\lesssim & \frac{h^{2\min(p+1,k)-2}}{p^{2k-3}} \|u\|_{k, \Omega_+\cup \Omega_-}^2.
	\end{aligned}
	\end{equation}
	Theorem \ref{thm:ghostapp} and regularity \eqref{equ:regularity} imply:
	\begin{equation}
		\frac{1}{h} g_{p,h}(I^{p,h}\phi,  I^{p,h}\phi)^{\frac{1}{2}} \lesssim \frac{h}{p^{1/2}} \|u-u^{p,h}\|_{0, \Omega}.
	\end{equation}
	Combining the above estimates completes the proof of \eqref{equ:l2est}.
\end{proof}

\subsection{Error estimates for interface eigenvalue problems} In this section, we adopt the Babu{\v{s}}ka-Osborne theory [1, 2] to establish the approximation results for interface eigenvalue problems. For this purpose, we define the solution operator $T: L^2(\Omega) \rightarrow  H^2(\Omega_+\cup\Omega_-) \cap H_0^1(\Omega)$ as
\begin{equation}\label{equ:soloperator}
    a_{p,h}(Tf, v) = (f, v), \quad  \forall v \in H^\frac{3}{2}(\Omega_+\cup\Omega_-).
\end{equation}
The eigenvalue problem \eqref{equ:eigenprob} can be rewritten as
\begin{equation}
    Tu = \mu u.
\end{equation}
Analogously, we can define the discrete solution operator $T^{p,h}: V^{p,h} \rightarrow V^{p,h}_0$ as
\begin{equation}\label{equ:dissoloperator}
    A_{p,h}(T^{p,h}f, v) = M_{p,h}(f, v), \quad \forall v \in V^{p,h}.
\end{equation}
The discrete eigenvalue problem \eqref{equ:usem_eigen} is equivalent to
\begin{equation}
    T^{p,h} u^{p,h} = \mu^{p,h} u^{p,h}.
\end{equation}
It is straightforward to see that we have $\mu = \frac{1}{\lambda}$ and $\mu^{p,h} = \frac{1}{\lambda^{p,h}}$. Furthermore, $T^{p,h}$ is self-adjoint.

To facilitate our analysis, we introduce an intermediate interface eigenvalue problem: find $(\tilde{\lambda}^{p,h},\tilde{u}^{p,h}) \in \mathbb R^+ \times V^{p,h}_0$ such that
\begin{equation}\label{equ:iusem_eigen}
    A_{p,h}(\tilde{u}^{p,h}, v) = \tilde{\lambda}^{p,h}(\tilde{u}^{p,h}, v), \quad \forall v \in V^{p,h}.
\end{equation}
The corresponding solution operator $\tilde{T}^{p,h}: L^2(\Omega) \rightarrow V^{p,h}_0$ is defined as
\begin{equation}\label{equ:isoloperator}
    A_{p,h}(\tilde{T}^{p,h}f, v) = (f, v), \quad \forall v \in V^{p,h}_0.
\end{equation}
Also, $\tilde{T}^{p,h}$ is self-adjoint.

We commence our analysis with the following theorem concerning approximation:
\begin{theorem}\label{thm:opapp}
	Let $\mu, \; E_{\mu}$ denote $T$'s eigenvalue  and eigenspace respectively. Suppose $\mu$ has multiplicity $m$ and $E_{\mu} \subset H^{k}(\Omega_+\cup \Omega_-)$ with $k\ge p+1$. Then, we have 
	\begin{equation}\label{equ:operatornorm}
	\|(T- T^{p,h})|_{E_{\mu}}\|_{\mathcal{L}(L^2(\Omega))} \lesssim \frac{h^{\min(p+1,k)-1}}{p^{k-1}}.
	\end{equation}
\end{theorem}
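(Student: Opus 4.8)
The plan is to compare $T$ and $T^{p,h}$ through the intermediate operator $\tilde{T}^{p,h}$, writing
\[
(T-T^{p,h})f = (T - \tilde{T}^{p,h})f + (\tilde{T}^{p,h} - T^{p,h})f
\]
for $f \in E_\mu$, and bounding each piece separately in the $L^2(\Omega)$ operator norm. For the first term, $\tilde{T}^{p,h}$ is the Galerkin-type solution operator associated with the bilinear form $A_{p,h}$ and the $L^2$ pairing, so it is amenable to the standard duality machinery: for $f \in E_\mu$ I would first establish an energy-norm estimate $\vertiii{(T-\tilde{T}^{p,h})f}_\ast \lesssim \tfrac{h^{\min(p+1,k)-1}}{p^{k-3/2}}\|Tf\|_{k,\Omega_+\cup\Omega_-}$ by essentially repeating the argument of Theorem~\ref{thm:energyest} (coercivity \eqref{equ:coercivity}, a weak Galerkin orthogonality for $A_{p,h}$, and the interpolation estimate of Theorem~\ref{thm:interperr}), then upgrade to an $L^2$ bound $\|(T-\tilde{T}^{p,h})f\|_{0,\Omega} \lesssim \tfrac{h^{\min(p+1,k)}}{p^{k-1}}\|Tf\|_{k,\Omega_+\cup\Omega_-}$ by the Aubin--Nitsche argument of Theorem~\ref{thm:l2error}, using the dual problem \eqref{equ:dual} and its regularity \eqref{equ:regularity}. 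Since $f \in E_\mu$ means $Tf = \mu f$ and $E_\mu \subset H^k$, the elliptic regularity of the continuous problem gives $\|Tf\|_{k,\Omega_+\cup\Omega_-} \lesssim \|f\|_{0,\Omega}$, which converts these into the desired operator-norm bound for the first term.

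For the second term $(\tilde{T}^{p,h} - T^{p,h})f$, both operators map into $V^{p,h}_0$ and satisfy equations differing only in the right-hand side mass form: $A_{p,h}(\tilde{T}^{p,h}f,v) = (f,v)$ versus $A_{p,h}(T^{p,h}f,v) = M_{p,h}(f,v) = (f,v) + \gamma_M g_{p,h}(f,v)$ for all $v \in V^{p,h}$. Subtracting, $A_{p,h}((\tilde{T}^{p,h}-T^{p,h})f, v) = -\gamma_M g_{p,h}(f,v)$. Testing with $v = (\tilde{T}^{p,h}-T^{p,h})f$ and using coercivity \eqref{equ:coercivity} together with Cauchy--Schwarz on the ghost-penalty bilinear form, I get $\vertiii{(\tilde{T}^{p,h}-T^{p,h})f}_\ast^2 \lesssim g_{p,h}(f,f)^{1/2}\, g_{p,h}(v,v)^{1/2} \lesssim g_{p,h}(f,f)^{1/2}\, \vertiii{v}_\ast h$, hence $\vertiii{(\tilde{T}^{p,h}-T^{p,h})f}_\ast \lesssim h\, g_{p,h}(f,f)^{1/2} \cdot h^{-1} = \tfrac{1}{h}g_{p,h}(f,f)^{1/2}\cdot h$; more carefully, $\vertiii{v}_\ast \lesssim \tfrac{1}{h}g_{p,h}(f,f)^{1/2}\cdot h$. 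The point is that $g_{p,h}(f,f)$ must be controlled for $f \in E_\mu \subset H^k$: since $g_{p,h}$ only sees jumps of normal derivatives across ghost faces, and $f$ is globally smooth on each of $\Omega_{\pm,h}$ — wait, $f$ lives only on $\Omega_+\cup\Omega_-$, so I must be careful; I will instead use that $f = \lambda T f$ with $Tf \in H^k$, extend via $E_\pm$, and bound $g_{p,h}(f,f)$ using the super-approximation / consistency estimate in the spirit of Theorem~\ref{thm:ghostapp}, controlling it by $\tfrac{h^{2\min(p+1,k)}}{p^{2(k-1)}}\|f\|_{k}^2$ or better. Combined with the $L^2$-from-energy duality as above (applied to this second difference too), this yields the same rate $\tfrac{h^{\min(p+1,k)-1}}{p^{k-1}}$, possibly with a spare power of $h$.

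Assembling the two bounds, taking the supremum over $f \in E_\mu$ with $\|f\|_{0,\Omega} = 1$, and absorbing the regularity constant for $E_\mu$ gives \eqref{equ:operatornorm}. The main obstacle I anticipate is the rigorous treatment of $g_{p,h}(f,f)$ for eigenfunctions $f \in E_\mu$: unlike in Theorem~\ref{thm:ghostapp} where the argument is applied to $I^{p,h}v$ and exploits $g_{p,h}(I^{p,h}v, I^{p,h}v) = g_{p,h}(v - I^{p,h}v, v-I^{p,h}v)$ because $g_{p,h}(v,v)=0$ for $v\in H^k$, here $f$ is not globally $H^k$ on the fictitious domains and one cannot directly claim the ghost-penalty jumps vanish; the correct move is to insert $I^{p,h}(\lambda Tf)$ and route everything through the interpolation and trace estimates \eqref{equ:interpapp}, \eqref{equ:trace} so that only interpolation errors of the smooth function $Tf$ appear. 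A secondary bookkeeping point is verifying the weak Galerkin orthogonality for $\tilde{T}^{p,h}$ (an analogue of Theorem~\ref{thm:weakgal} with the ghost-penalty defect term) and making sure the $L^2$ duality argument is valid for the intermediate operator, which it is since $\tilde T^{p,h}$ is precisely of the CutFEM/Nitsche type already analyzed.
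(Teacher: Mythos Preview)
Your proposal is correct and follows essentially the same route as the paper: split via $\tilde{T}^{p,h}$, invoke Theorem~\ref{thm:l2error} for $\|(T-\tilde{T}^{p,h})f\|_{0,\Omega}$, and for the second piece use coercivity on $A_{p,h}((\tilde{T}^{p,h}-T^{p,h})f,v)=-\gamma_M g_{p,h}(f,v)$, controlling $g_{p,h}(f,\cdot)$ by inserting $I^{p,h}f$ and applying Theorem~\ref{thm:ghostapp}, then closing with finite-dimensionality of $E_\mu$. Two minor simplifications relative to your sketch: the paper passes from $\vertiii{(\tilde{T}^{p,h}-T^{p,h})f}_\ast$ to $L^2$ by Poincar\'e rather than a second duality argument, and your worry that $g_{p,h}(f,\cdot)$ is ill-defined on the fictitious domains is unfounded---once $f_\pm$ is extended by $E_\pm$ the result lies in $H^k(\Omega)$, so $[\partial_{\mathbf n}^j(E_\pm f_\pm)]=0$ for every $j\le p\le k-1$ and the decomposition $g_{p,h}(f,v)=g_{p,h}(f-I^{p,h}f,v)+g_{p,h}(I^{p,h}f,v)$ is handled exactly as in Theorem~\ref{thm:ghostapp}.
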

\begin{proof} 
	The assumption $E_{\mu} \subset H^{k+1}(\Omega_+\cup \Omega_-)$ suggests that $T^{p,h}$ is well-defined. For any $u \in E_{\mu}$ with $\|u\|_{0, \Omega}=1$, the triangle inequality implies that 
	\begin{equation}
		\|(T - T^{p,h})u\|_{0, \Omega} \le \|(T - \tilde{T}^{p,h})u \|_{0, \Omega} + \|( \tilde{T}^{p,h} - T^{p,h}) u \|_{0, \Omega} \coloneqq I_1 + I_2. 
	\end{equation}
	For $I_1$, we can bound it using Theorem \ref{thm:l2error} as follows:
	\begin{equation}
		I_1 \le \frac{h^{\min(p+1,k)}}{p^{k-1}} \|u\|_{k, \Omega_+\cup \Omega_-}. 
	\end{equation}
	
	To bound $I_2$, we first consider its difference in energy norm:
	\begin{equation}
	\begin{aligned}
	& \vertiii{(\tilde{T}^{p,h} - T^{p,h})u}_{\ast}^2 \\
		  \lesssim & A_{p,h}((\tilde{T}^{p,h} - T^{p,h})u, (\tilde{T}^{p,h} - T^{p,h})u)\\
		   = & A_{p,h}(\tilde{T}^{p,h}u, (\tilde{T}^{p,h} - T^{p,h})u) - A_{p,h}(T^{p,h}u, (\tilde{T}^{p,h} - T^{p,h})u)\\
		   =&  g_{p,h}(u-I^{p,h}u, (\tilde{T}^{p,h} - T^{p,h})u) +  g_{p,h}(I^{p,h}u, (\tilde{T}^{p,h} - T^{p,h})u)\\
		   \lesssim & \left[ g_{p,h}(u-I^{p,h}u, u-I^{p,h}u)^{1/2}+ g_{p,h}(I^{p,h}u, I^{p,h}u)^{1/2} \right] \\
		   & g_{p,h}((\tilde{T}^{p,h} - T^{p,h})u, (\tilde{T}^{p,h} - T^{p,h})u)^{1/2} \\
		   \lesssim &  g_{p,h}(I^{p,h}u, I^{p,h}u)^{1/2}\vertiii{(\tilde{T}^{p,h} - T^{p,h})u}_{\ast}. 
	\end{aligned}
	\end{equation}
	which implies that 
	\begin{equation}\label{equ:energydiff}
		\vertiii{(\tilde{T}^{p,h} - T^{p,h})u}_{\ast} \le  g_{p,h}(I^{p,h}u, I^{p,h}u)^{1/2}\lesssim
		\frac{h^{\min(p+1,k)}}{p^{k-1}} \|u\|_{k, \Omega_+\cup \Omega_-},
	\end{equation}
	where we have used the approximation result for the ghost penalty term in Theorem \ref{thm:ghostapp}.  Then, Poincaré's inequality shows that 
	\begin{equation}
		\|\tilde{T}^{p,h}u - T^{p,h}u \|_{0, \Omega}\lesssim
		\frac{h^{\min(p+1,k)}}{p^{k-1}} \|u\|_{k, \Omega_+\cup \Omega_-}.
	\end{equation}
	
	Combining the above estimates gives
	\begin{equation}
		\|Tu - T^{p,h}u \|_{0, \Omega}\lesssim
		\frac{h^{\min(p+1,k)-1}}{p^{k-1}} \|u\|_{k, \Omega_+\cup \Omega_-}. 
	\end{equation}
	Notice that the eigenspace $E_{\mu}$ is finite-dimensional, we conclude the proof of \eqref{equ:operatornorm}. 
\end{proof}

Building upon the preceding theorem, we can establish the subsequent spectral approximation result:
\begin{theorem}\label{thm:eigenvalueerror}
Let $\mu$ be an eigenvalue of $T$ with multiplicity $m$, and $\mu^{p,h}_i$ ($i=1, \cdots, m$) be the corresponding discrete eigenvalues. Let $E_{\mu}$ denote the corresponding eigenvalue space, and $E_{\mu}^{p,h}$ be the corresponding discrete eigenvalue space. Suppose $E_{\mu} \subset H^{k}(\Omega_+\cup \Omega_-)$ with $k\ge p+1$. Then, we have
\begin{align}\label{equ:eigenvalueerror}
	|\mu - \mu_i^{p,h}| \lesssim \frac{h^{2\min(p+1,k)-2}}{p^{2k-3}}, \quad 1 \leq i \leq m.
\end{align}
For any $u \in E_{\mu}$, there exists $u^{p,h} \in E_{\mu}^{p,h}$ such that 
\begin{equation}
	\| u - u^{p,h}\|_{0, \Omega} \le \frac{h^{\min(p+1,k)}}{p^{(k-1)}} \|u\|_{k, \Omega_+\cup \Omega_-}.
\end{equation}
\end{theorem}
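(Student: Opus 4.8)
The plan is to invoke the Babu\v{s}ka--Osborne theory for the pair of self-adjoint operators $(T, T^{p,h})$, but with the key subtlety that $M_{p,h}$ is not the $L^2$ inner product, so we route the argument through the intermediate operator $\tilde T^{p,h}$ and the intermediate eigenvalue problem \eqref{equ:iusem_eigen}. First I would record the abstract spectral approximation estimates (e.g.\ from \cite{BaOs1991}): for a self-adjoint compact operator approximated in operator norm on the eigenspace, the eigenvalue error is controlled by $\|(T-T^{p,h})|_{E_\mu}\|_{\mathcal L(L^2)}^2$ plus the gap term, and the eigenfunction error by the first power of the same operator-norm quantity. Theorem \ref{thm:opapp} already supplies $\|(T-T^{p,h})|_{E_\mu}\|_{\mathcal L(L^2(\Omega))}\lesssim h^{\min(p+1,k)-1}/p^{k-1}$, so squaring this gives the eigenvalue bound $h^{2\min(p+1,k)-2}/p^{2k-2}$; the stated rate has $p^{2k-3}$ in the denominator, so this crude squaring already suffices (it is in fact slightly sharper in $p$), and similarly the eigenfunction bound drops straight out.

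The more careful route, which I would spell out to get exactly the stated exponents and to justify why $M_{p,h}$ being nonstandard causes no harm, is the two-step decomposition flagged in the introduction. Write $\mu - \mu_i^{p,h} = (\mu - \tilde\mu_i^{p,h}) + (\tilde\mu_i^{p,h} - \mu_i^{p,h})$, where $\tilde\mu_i^{p,h} = 1/\tilde\lambda_i^{p,h}$ comes from \eqref{equ:iusem_eigen}. For the first difference, $\tilde T^{p,h}$ is the Galerkin approximation of $T$ with respect to the genuine $L^2$ inner product and the bilinear form $A_{p,h}$ (up to the ghost-penalty consistency term), so the standard Babu\v{s}ka--Osborne estimates apply directly: the eigenvalue error is bounded by the square of the energy-norm best approximation of $E_\mu$ together with the consistency defect $g_{p,h}(I^{p,h}u, I^{p,h}u)/h^2$, both of which are controlled by Theorems \ref{thm:interperr}, \ref{thm:ghostapp}, and \ref{thm:energyest}. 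For the second difference, $\tilde\mu_i^{p,h}$ and $\mu_i^{p,h}$ are eigenvalues of $\tilde T^{p,h}$ and $T^{p,h}$ acting on the \emph{same} finite-dimensional space $V^{p,h}_0$, differing only through the extra ghost term $\gamma_M g_{p,h}(\cdot,\cdot)$ in the mass form; a perturbation/super-approximation estimate — exactly the bound \eqref{equ:energydiff} in the proof of Theorem \ref{thm:opapp} showing $\vertiii{\tilde T^{p,h}u - T^{p,h}u}_\ast \lesssim g_{p,h}(I^{p,h}u,I^{p,h}u)^{1/2} \lesssim h^{\min(p+1,k)}/p^{k-1}$ — bounds $|\tilde\mu_i^{p,h} - \mu_i^{p,h}|$, and its square is of strictly higher order than the claimed rate. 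Adding the two contributions yields \eqref{equ:eigenvalueerror}.

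For the eigenfunction estimate I would argue analogously: given $u \in E_\mu$ with $\|u\|_{0,\Omega}=1$, the Babu\v{s}ka--Osborne eigenfunction theorem applied to $(T, T^{p,h})$ produces $u^{p,h} \in E_\mu^{p,h}$ with $\|u - u^{p,h}\|_{0,\Omega}$ bounded by a constant times $\|(T-T^{p,h})|_{E_\mu}\|_{\mathcal L(L^2)}$, which by Theorem \ref{thm:opapp} is $\lesssim h^{\min(p+1,k)-1}/p^{k-1}$ — but to reach the sharper $h^{\min(p+1,k)}/p^{k-1}$ stated here one must not lose a power of $h$, so instead I would bound $\|u - u^{p,h}\|_{0,\Omega}$ by the $L^2$ best approximation $\|u - I^{p,h}u\|_{0,\Omega} \lesssim h^{\min(p+1,k)}/p^k \cdot \|u\|_{k}$ from \eqref{equ:interpapp} plus the operator-difference terms $\|Tu - \tilde T^{p,h}u\|_{0,\Omega}$ (Theorem \ref{thm:l2error}, which gives $h^{\min(p+1,k)}/p^{k-1}$) and $\|\tilde T^{p,h}u - T^{p,h}u\|_{0,\Omega}$ (Poincar\'e plus \eqref{equ:energydiff}, again $h^{\min(p+1,k)}/p^{k-1}$); the dominant term is $h^{\min(p+1,k)}/p^{k-1}$, which is the asserted bound.

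\textbf{Main obstacle.} The one genuinely nonroutine point is the second step of the decomposition: controlling $|\tilde\mu_i^{p,h} - \mu_i^{p,h}|$, i.e.\ comparing the eigenvalues of two self-adjoint pencils on $V^{p,h}_0$ that share the stiffness form $A_{p,h}$ but use the inconsistent mass forms $(\cdot,\cdot)$ versus $M_{p,h} = (\cdot,\cdot) + \gamma_M g_{p,h}(\cdot,\cdot)$. The resolution is the operator super-approximation property of the ghost-penalty term — precisely that $g_{p,h}$ evaluated on discrete functions is, after division by $h^2$, of the same order as the interpolation error because $g_{p,h}(I^{p,h}u, I^{p,h}u) = g_{p,h}(u - I^{p,h}u, u - I^{p,h}u)$ vanishes on globally smooth functions — so the perturbation of the mass form is uniformly small in $h$ and $p$, hence so is the eigenvalue perturbation by a min-max argument. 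Everything else is an assembly of the already-established approximation theorems plus a citation of the abstract Babu\v{s}ka--Osborne estimates.
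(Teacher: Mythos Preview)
Your overall plan of routing through $\tilde T^{p,h}$ is the right idea, but two things deserve comment: your Route~1 has a real gap, and your Route~2 decomposes at a different level than the paper does.

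On Route~1: the Babu\v{s}ka--Osborn eigenvalue estimate (Theorem~7.3 in \cite{BaOs1991}) has the form
\[
|\mu - \mu_i^{p,h}| \;\lesssim\; \sum_{j,\ell=1}^m \big|((T-T^{p,h})u_j,u_\ell)\big| \;+\; \|(T-T^{p,h})|_{E_\mu}\|_{\mathcal L(L^2)} ,
\]
and the bilinear sum on the right is \emph{not} automatically controlled by the (square of the) operator norm. It is precisely this term that carries the doubled convergence rate, and it must be estimated directly; ``crude squaring'' of Theorem~\ref{thm:opapp} handles only the second contribution.

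On Route~2 versus the paper: you split at the level of eigenvalues, $\mu - \mu_i^{p,h} = (\mu - \tilde\mu_i^{p,h}) + (\tilde\mu_i^{p,h}-\mu_i^{p,h})$, whereas the paper applies Babu\v{s}ka--Osborn \emph{once}, directly to the pair $(T,T^{p,h})$, and splits only the bilinear term: $((T-T^{p,h})u_j,u_\ell)=I_1+I_2$ with $I_1=((T-\tilde T^{p,h})u_j,u_\ell)$ and $I_2=((\tilde T^{p,h}-T^{p,h})u_j,u_\ell)$. For $I_1$ one rewrites $(u_\ell,\cdot)=a_{p,h}(Tu_\ell,\cdot)$, inserts $\pm\tilde T^{p,h}u_\ell$, and invokes Theorem~\ref{thm:energyest} together with \eqref{equ:other}; for $I_2$ one rewrites $(u_\ell,\cdot)=M_{p,h}(u_\ell,\cdot)-\gamma_M g_{p,h}(u_\ell,\cdot)$, uses $M_{p,h}(u_\ell,\cdot)=A_{p,h}(T^{p,h}u_\ell,\cdot)$, inserts $\pm\tilde T^{p,h}u_\ell$ again, and reduces to ghost-penalty quantities bounded by Theorem~\ref{thm:ghostapp} and \eqref{equ:energydiff}. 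Your route can be made to work, but it is longer: the first piece still needs an $I_1$-style bilinear estimate inside its own Babu\v{s}ka--Osborn application, and the second piece requires a genuine pencil-perturbation argument on $V^{p,h}_0$ that you leave as ``a min-max argument'' --- note that $g_{p,h}(v,v)/(v,v)$ is \emph{not} uniformly small over all of $V^{p,h}_0$, so the min-max must be run on a carefully chosen subspace built from already-converged intermediate eigenfunctions. The paper's route sidesteps both detours: the intermediate eigenvalues $\tilde\mu_i^{p,h}$ never appear.

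For the eigenfunction bound the paper simply cites Theorem~7.4 of \cite{BaOs1991} together with Theorem~\ref{thm:opapp}, whose proof (the $I_1+I_2$ splitting there) already yields the rate $h^{\min(p+1,k)}/p^{k-1}$. Your insertion of $\|u-I^{p,h}u\|_{0,\Omega}$ is neither needed nor correct as written --- $u^{p,h}$ is a discrete eigenfunction, not an interpolant or a source-problem solution --- but the remaining two terms you list are exactly the operator-norm decomposition, which is all that is required.
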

\begin{proof}
Let $u_1, \cdots, u_m$ be an orthonormal basis of $E_{\mu}$. By Theorem 7.3 in \cite{BaOs1991}, we have 
\begin{equation}
	\left|\mu - \mu_i^{p,h}\right| \lesssim \sum_{j, \ell =1}^m\left|\left((T-T^{p,h}) u_j, u_{\ell}\right)\right|+\left\|\left(T-T^{p,h}\right)\rvert_{E_\mu}\right\|_{\mathcal{L}(L^2(\Omega))}.
\end{equation}
Since the bound for the second term in the above inequality has been already established in Theorem \ref{thm:opapp}, it suffices to estimate the first term.  Using the triangle inequality, we have 
\begin{equation}
	((T-T^{p,h}) u_j, u_{\ell})\lesssim
		((T-\tilde{T}^{p,h}) u_j, u_{\ell}) + ((\tilde{T}^{p,h}-T^{p,h}) u_j, u_{\ell}) \coloneqq I_1 + I_2.
\end{equation}
For $I_1$, we have
\begin{equation}
	\begin{aligned}
		I_1 =  &\left(u_{\ell}, (T-\tilde{T}^{p,h}) u_j\right) \\
		     = & a_{p,h}(Tu_{\ell},  (T-\tilde{T}^{p,h}) u_j) \\
		     = & a_{p,h}((T-\tilde{T}^{p,h})u_{\ell},  (T-\tilde{T}^{p,h}) u_j) +  a_{p,h}(\tilde{T}^{p,h}u_{\ell},  (T-\tilde{T}^{p,h}) u_j)\\
		      = & a_{p,h}((T-\tilde{T}^{p,h})u_{\ell},  (T-\tilde{T}^{p,h}) u_j) +  a_{p,h}((T-\tilde{T}^{p,h}) u_j,\tilde{T}^{p,h}u_{\ell})\\
		      \lesssim & \vertiii{(T-\tilde{T}^{p,h})u_{\ell}}\vertiii{(T-\tilde{T}^{p,h})u_{j}}
		       + \frac{1}{h^2}g_{p,h} (\tilde{T}^{p,h} u_j,\tilde{T}^{p,h}u_{\ell}) \\
		       \lesssim & \vertiii{(T-\tilde{T}^{p,h})u_{\ell}}\vertiii{(T-\tilde{T}^{p,h})u_{j}} + \\
		       & \frac{1}{h^2}g_{p,h} (\tilde{T}^{p,h} u_j,\tilde{T}^{p,h}u_{j})^{1/2}g_{p,h} (\tilde{T}^{p,h} u_{\ell},\tilde{T}^{p,h}u_{\ell})^{1/2}\\
		       \lesssim &\frac{h^{2\min(p+1,k)-2}}{p^{(2k-3)}} \|u_j\|_{k, \Omega_+\cup \Omega_-}\|u_{\ell}\|_{k, \Omega_+\cup \Omega_-},
	\end{aligned}
\end{equation}
where we have used the energy error estimate in Theorem \ref{thm:energyerr} and adopted the Cauchy-Schwartz inequality and the same technique as in \eqref{equ:other} to estimate the ghost penalty term. 

For $I_2$, we have
\begin{equation}
	\begin{aligned}
		I_2 = & M_{p,h}(u_{\ell}, (\tilde{T}^{p,h}-T^{p,h}) u_j) \\
		 = & A_{p,h}(T^{p,h}u_{\ell}, (\tilde{T}^{p,h}-T^{p,h}) u_j) - g_{p,h}(u_{\ell}, (\tilde{T}^{p,h}-T^{p,h}) u_j)\\
		 = & A_{p,h}((T^{p,h}-\tilde{T}^{p,h})u_{\ell}, (\tilde{T}^{p,h}-T^{p,h}) u_j) + A_{p,h}(\tilde{T}^{p,h}u_{\ell}, (\tilde{T}^{p,h}-T^{p,h}) u_j)  \\
		 & - g_{p,h}(u_{\ell}, (\tilde{T}^{p,h}-T^{p,h}) u_j)\\
		 = & A_{p,h}((T^{p,h}-\tilde{T}^{p,h})u_{\ell}, (\tilde{T}^{p,h}-T^{p,h}) u_j) + g_{p,h}(u_j,\tilde{T}^{p,h}u_{\ell}) \\
		 & - g_{p,h}(u_{\ell}, (\tilde{T}^{p,h}-T^{p,h}) u_j)\\
		 \coloneqq & F_1 + F_2 + F_3. 
	\end{aligned}
\end{equation}
Using \eqref{equ:energydiff}, we have 
\begin{equation}
	F_1 \lesssim \frac{h^{2\min(p+1,k)-2}}{p^{2(k-1)}} \|u_j\|_{k, \Omega_+\cup \Omega_-}\|u_{\ell}\|_{k, \Omega_+\cup \Omega_-}.
\end{equation}
To estimate $F_2$, we have
\begin{equation}
	\begin{aligned}
		F_2 = & g_{p,h}(u_j - I^{p,h}u_j, \tilde{T}^{p,h}u_{\ell}) + g_{p,h}(I^{p,h}u_j, \tilde{T}^{p,h}u_{\ell}) \\
		 = & \left[ g_{p,h}(u_j - I^{p,h}u_j,u_j - I^{p,h}u_j)^{1/2} + g_{p,h}(I^{p,h}u_j, I^{p,h}u_j)^{1/2} \right]\\
		 &\times g_{p,h}(\tilde{T}^{p,h}u_{\ell}, \tilde{T}^{p,h}u_{\ell})^{1/2} \\
		  \lesssim &\frac{h^{2\min(p+1,k)-2}}{p^{2k-3}} \|u_j\|_{k, \Omega_+\cup \Omega_-}\|u_{\ell}\|_{k, \Omega_+\cup \Omega_-},
	\end{aligned}
\end{equation}
where we have used Theorem \ref{thm:ghostapp} and \eqref{equ:other}. Similarly, we can show 
\begin{equation}
	F_3 \le \frac{h^{2\min(p+1,k)-2}}{p^{2(k-1)}} \|u_j\|_{k, \Omega_+\cup \Omega_-}\|u_{\ell}\|_{k, \Omega_+\cup \Omega_-}.
\end{equation}
Combining all the above estimates completes the proof of \eqref{equ:eigenvalueerror}. 

Using Theorem 7.4 in \cite{BaOs1991}, we have 
\begin{equation}
\| u - u^{p,h}\|_{0, \Omega} \lesssim  \|(T-T^{p,h})|_{E_\mu}\|_{\mathcal{L}(L^2(\Omega))}\lesssim \frac{h^{\min(p+1,k)}}{p^{(k-1)}} \|u\|_{k, \Omega_+\cup \Omega_-}.
\end{equation}
\end{proof}

Using the relationship between $\mu$ ($\mu^{p,h}$) and $\lambda$ ($\lambda^{p,h}$) brings 
\begin{equation}\label{equ:eigenvalueerror2}
	|\lambda - \lambda_i^{p,h}| \lesssim \frac{h^{2\min(p+1,k)-2}}{p^{2k-3}}, \quad 1 \leq i \leq m.
\end{equation}

\section{Numerical experiments}
\label{sec:num}
In this section, we will provide a series of numerical examples to both substantiate our theoretical findings and showcase the improved robustness achieved through the inclusion of ghost penalty terms. For the first two examples, our computational domain is denoted as $\Omega = (-1, 1)\times (-1, 1)$. We generate a uniform partition $\mathcal{T}_h$ by subdividing $\Omega$ into $N^2$ sub-squares. The stabilizing constants must  be set. The interface eigenvalue problem requires both $\gamma_A,\gamma_M$ while the interface problem only necessitates the former. We set the values empirically, testing out different orders of magnitude for optimal results. For all $p$-convergence plots, we show the rate $O(p^{-8})$ simply to clearly highlight the difference between polynomial and the expected numerical spectral trends.


\subsection{Interface problems}
In this subsection, we will present two numerical examples to support the theoretical findings pertaining to elliptic interface problems. 

\subsubsection{Circular interface problem} 
In this example, we consider the elliptic interface problem \eqref{equ:bvp} with a circular interface of radius $r_0 = 0.5$. The exact solution is given by:
\[
u(r,\theta) = \begin{cases}
\frac{r^3}{\alpha_-}, & (r,\theta) \in \Omega_-, \\
\frac{r^3}{\alpha_+} + \left( \frac{1}{\alpha_-} - \frac{1}{\alpha_+} \right) r_0^3, & (r,\theta) \in \Omega_+,
\end{cases}
\]
where $r = \sqrt{x^2+y^2}$. We tested our numerical solutions for various choices of $\alpha_{\pm}$. For simplicity, we only present the numerical results for the case when $\alpha_+/\alpha_- = 1000$. The numerical results for other cases are similar.

  \begin{figure}[!h]
   \centering
  \subcaptionbox{\label{fig:circle_l2_p3}}
   {\includegraphics[width=0.31\textwidth]{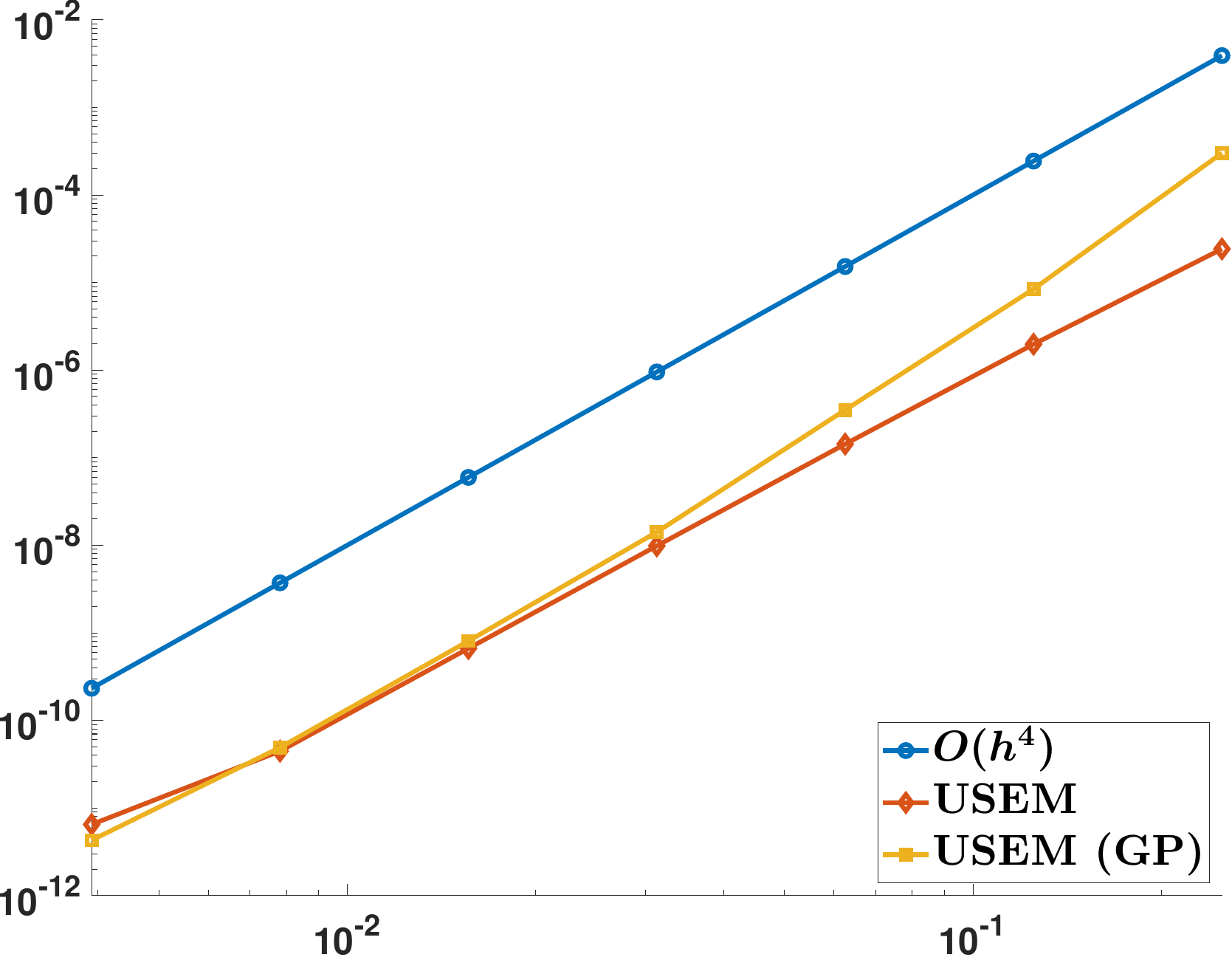}}
   \subcaptionbox{\label{fig:circle_h1_p3}}
   {\includegraphics[width=0.31\textwidth]{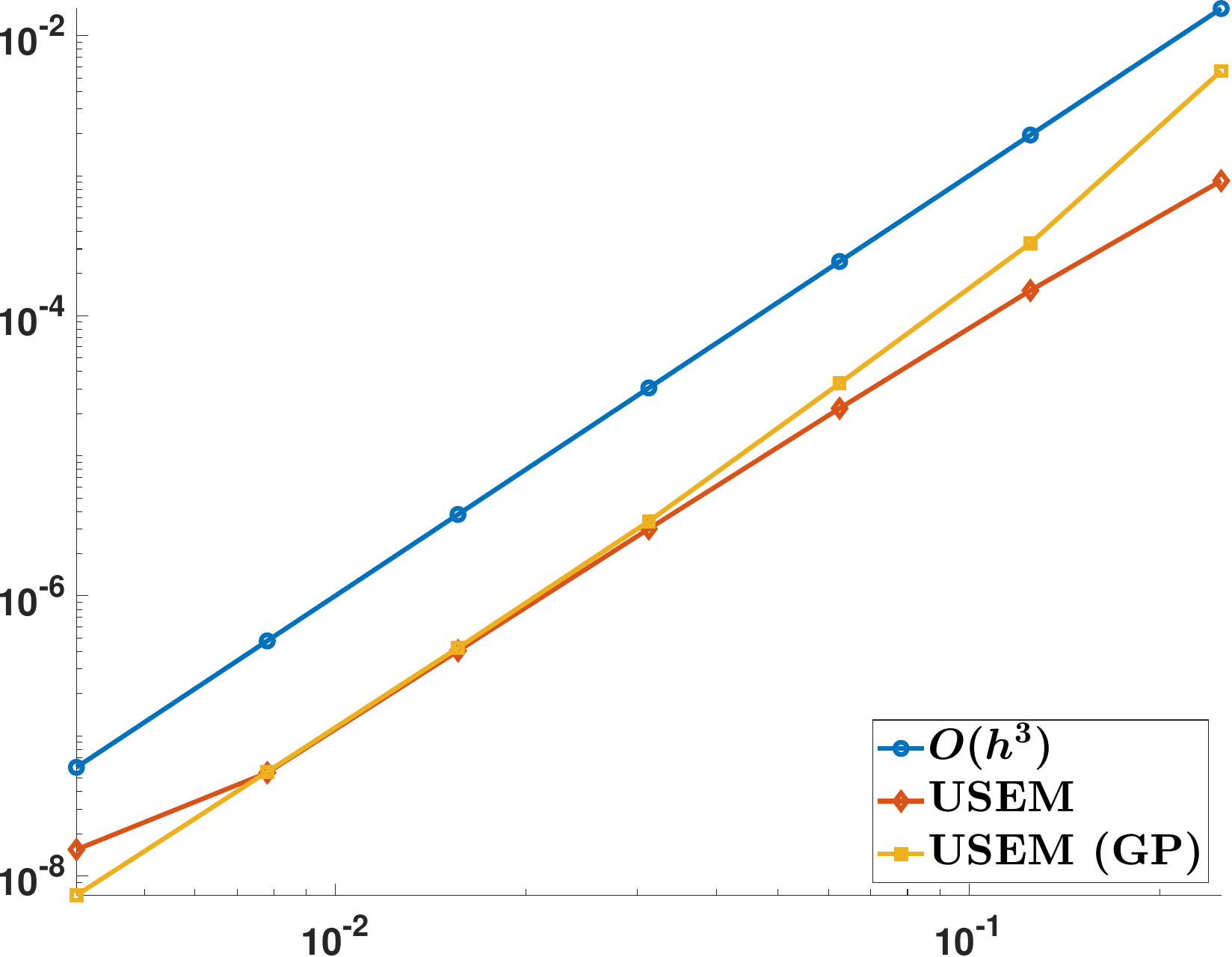}}
      \subcaptionbox{\label{fig:circle_cond_p3}}
   {\includegraphics[width=0.31\textwidth]{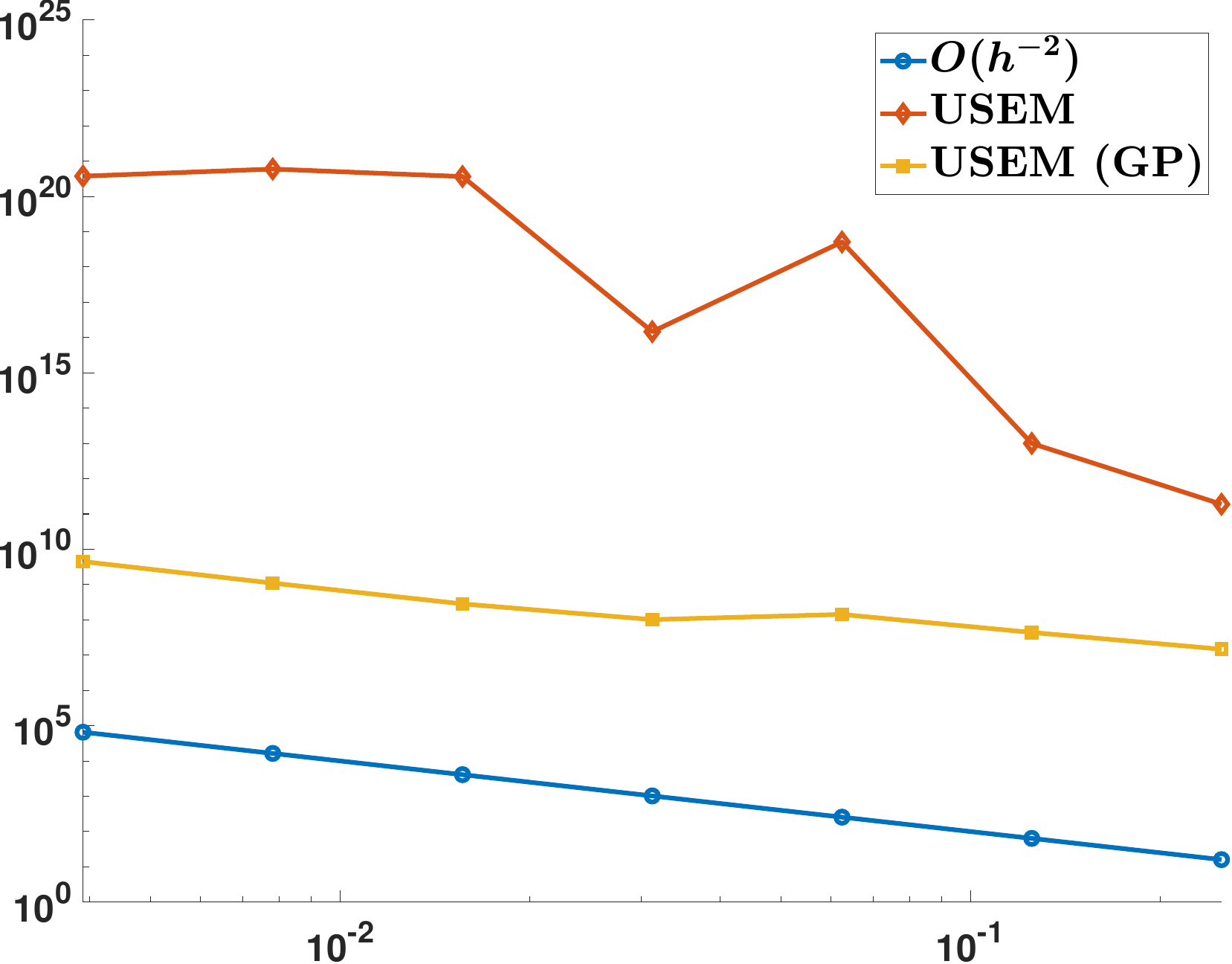}}
   \caption{Log-log $h$-convergence plots for circle interface problem with $\alpha_- = 1$ and $\alpha_+ = 1000$. Orange line: non-stabilized solution. Yellow Line: stabilized solution. (a)  $L^2$-error; (b) $H^1$-error; (c) Condition number. Stabilized with $\gamma_A = 0.1$.}
   \label{fig:circle_hconvergence}
\end{figure}

  \begin{figure}[!h]
   \centering
  \subcaptionbox{\label{fig:circle_l2_h16}}
   {\includegraphics[width=0.31\textwidth]{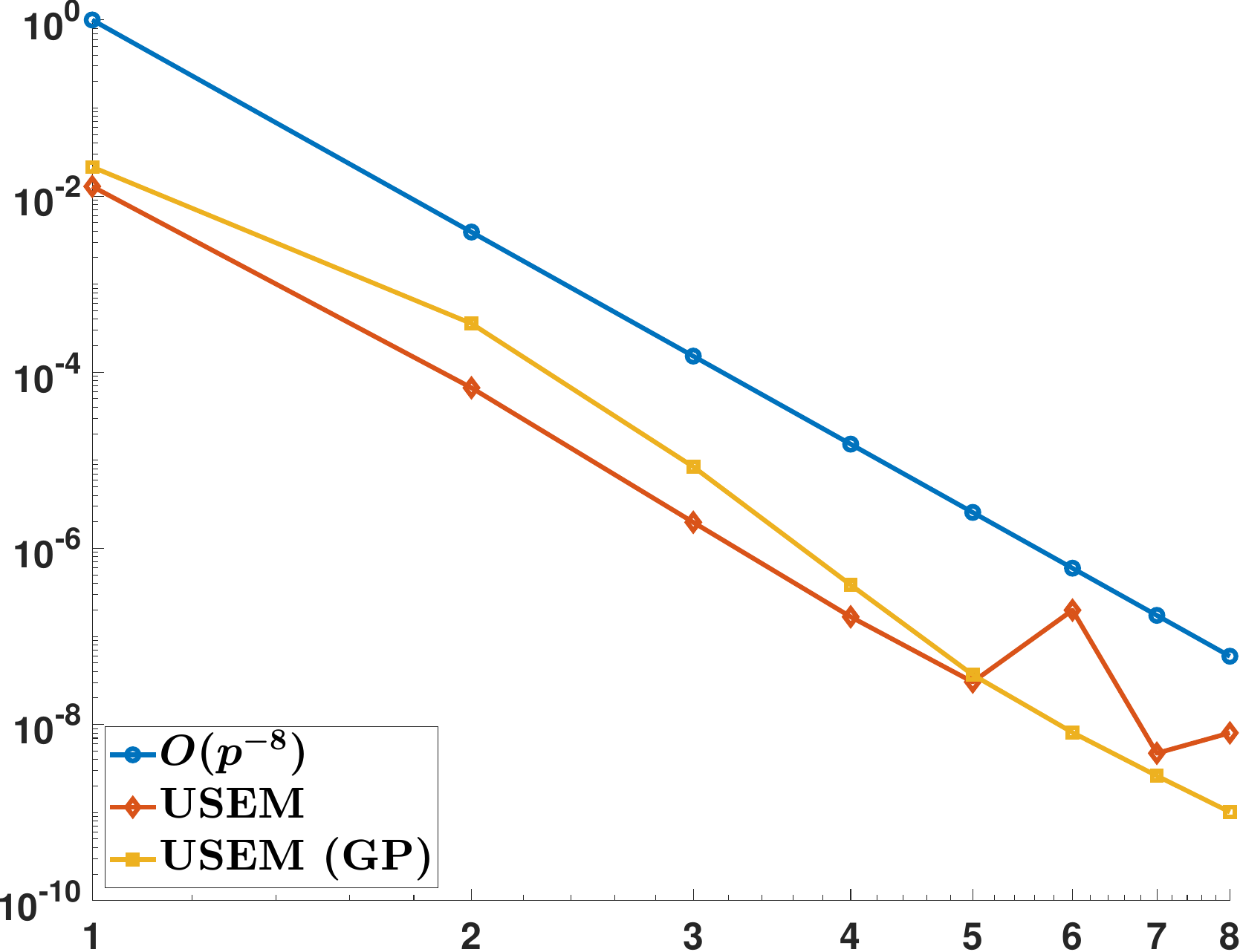}}
   \subcaptionbox{\label{fig:circle_h1_h16}}
   {\includegraphics[width=0.31\textwidth]{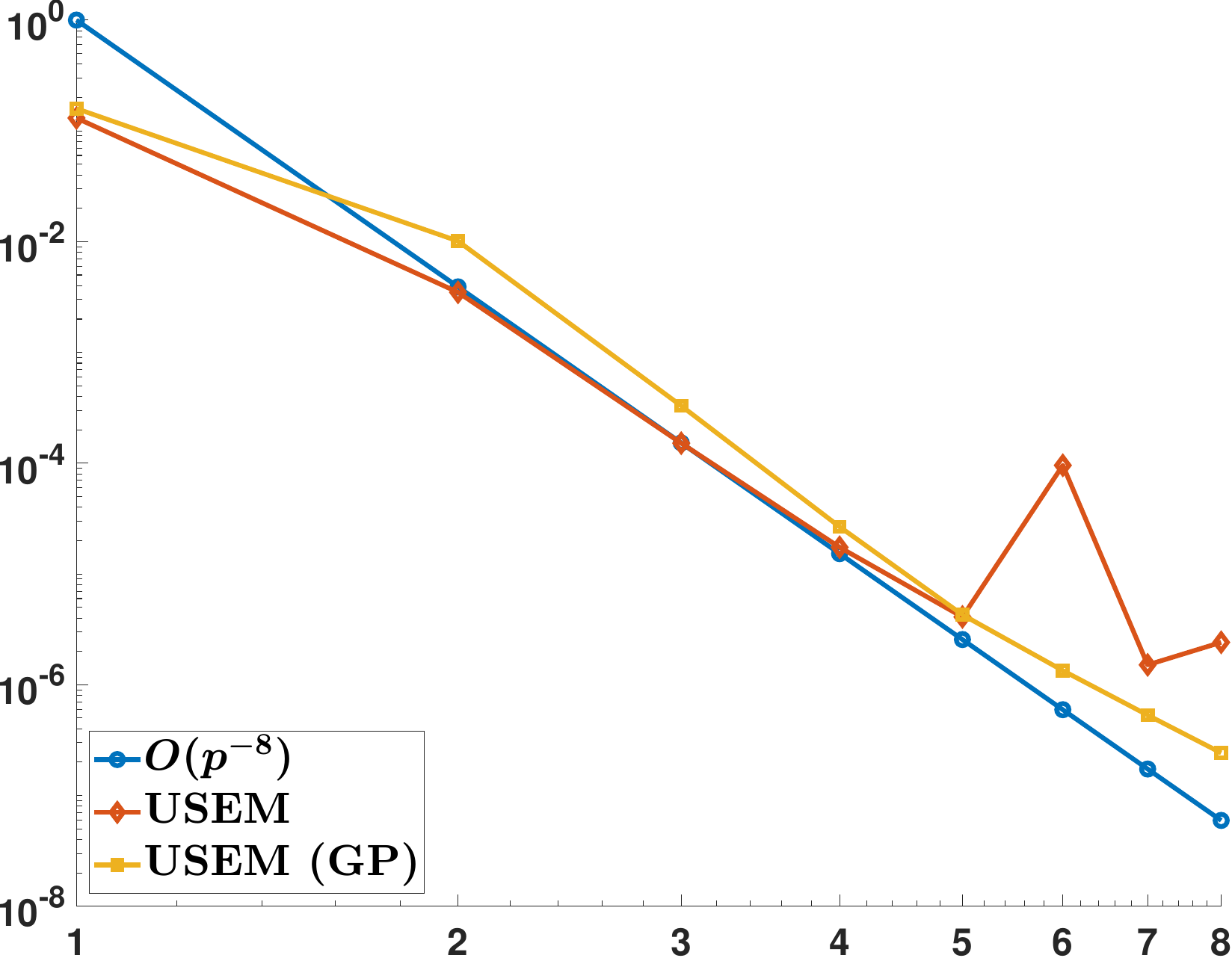}}
      \subcaptionbox{\label{fig:circle_cond_h16}}
   {\includegraphics[width=0.31\textwidth]{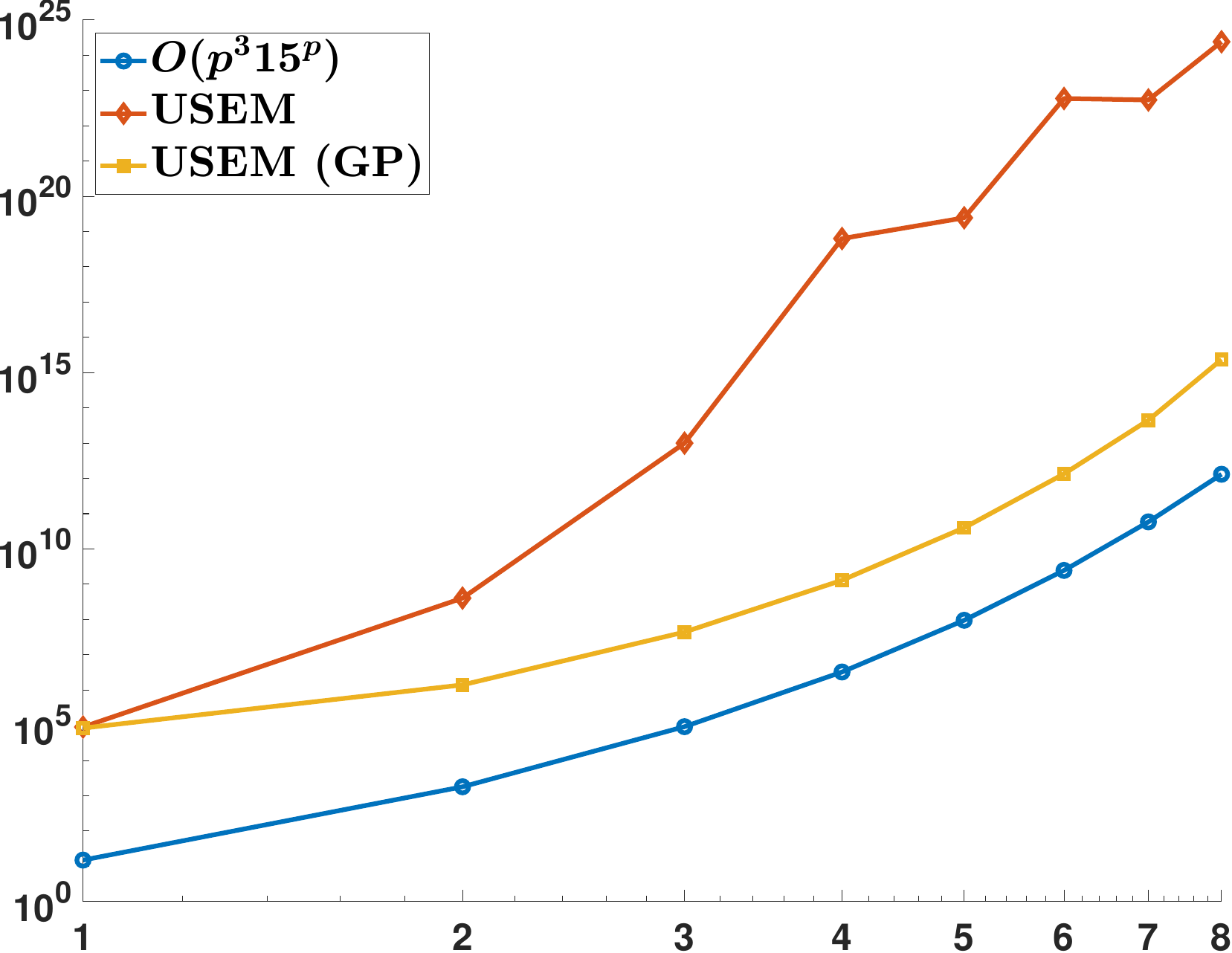}}
   \caption{Semi-log $p$-convergence plots for circle interface problem with $\alpha_- = 1$ and $\alpha_+ = 1000$. Orange line: non-stabilized solution. Yellow Line: stabilized solution. (a)  $L^2$-error; (b) $H^1$-error; (c) Condition number. Stabilized with $\gamma_A = 0.05$. }
   \label{fig:circle_pconvergence}
\end{figure}

Firstly, we present $h$-convergence results in Figure \ref{fig:circle_hconvergence} for $p=3$. As expected from our prior analysis, we observe convergence rates following:
\begin{equation*}
    O(h^{\min(p+1,m)}) = O(h^{p+1}) = O(h^4),
\end{equation*}
in the $L^2$-norm, which is consistent with our theoretical results. Similarly, a convergence rate of $O(h^p)$ can be observed for the $H^1$ error. We observe that our stabilized ghost penalty (GP) version outperforms the standard approach in all aspects, preserving convergence rates in a superior fashion. Our graphs even suggest that this difference would become more pronounced as $h$ decreases further. Finally, we appreciate a tremendous improvement in the stiffness condition number with ghost penalty stabilization, with its evolution resembling $O(h^{-2})$ growth, similar to fitted Finite Element Methods.

Figure \ref{fig:circle_pconvergence} presents $p$-convergence results for a grid of $16 \times 16$ square elements. Regarding the $L^2$-error, only the ghost penalty (GP) version noticeably exhibits spectral convergence, initially deviating from the reference polynomial line (blue). In contrast, the non-stabilized version shows no apparent curvature, indicating a lack of spectral behavior. It's worth noting that numerical limitations arising from double-precision arithmetic may hinder further spectral behavior beyond a polynomial degree of six. The ghost penalty stabilization maintains a cleaner trajectory as we approach the degree eight limit, while the standard USEM approach becomes erratic and yields somewhat unreliable results.
Similar properties extend to the $H^1$-norm, although the overall convergence quality is notably reduced. We can only discern a hint of spectral behavior for low-order polynomial  bases in the GP case. Once again, a significant difference exists between the algorithms in terms of the evolution of the condition number, with even the ghost penalty version growing beyond an exponential rate.

\subsubsection{Flower shape interface problem}
In this example, we consider a flower shaped interface problem. The interface curve $\Gamma$ in polar coordinates is given by
\[
r = \frac{1}{2} + \frac{\sin(5\theta)}{7},
\]
which contains both convex and concave parts. The diffusion coefficient is piecewise constant with $\alpha_{-} = 1$ and $\alpha_{+} = 10$. The right-hand function $f$ in \eqref{equ:bvp} is chosen to match the exact solution
\[
u(r,\theta) = \begin{cases}
    e^{r^2} & \text{if } (r,\theta) \in \Omega^{-}, \\
    0.1r^4 - 0.01 \ln(2r) & \text{if } (r,\theta) \in \Omega^{+}.
\end{cases}
\]
In this case, both interface jump conditions are nonhomogeneous and computed through the exact solution.

  \begin{figure}[!h]
   \centering
  \subcaptionbox{\label{fig:flower_l2_p3}}
   {\includegraphics[width=0.31\textwidth]{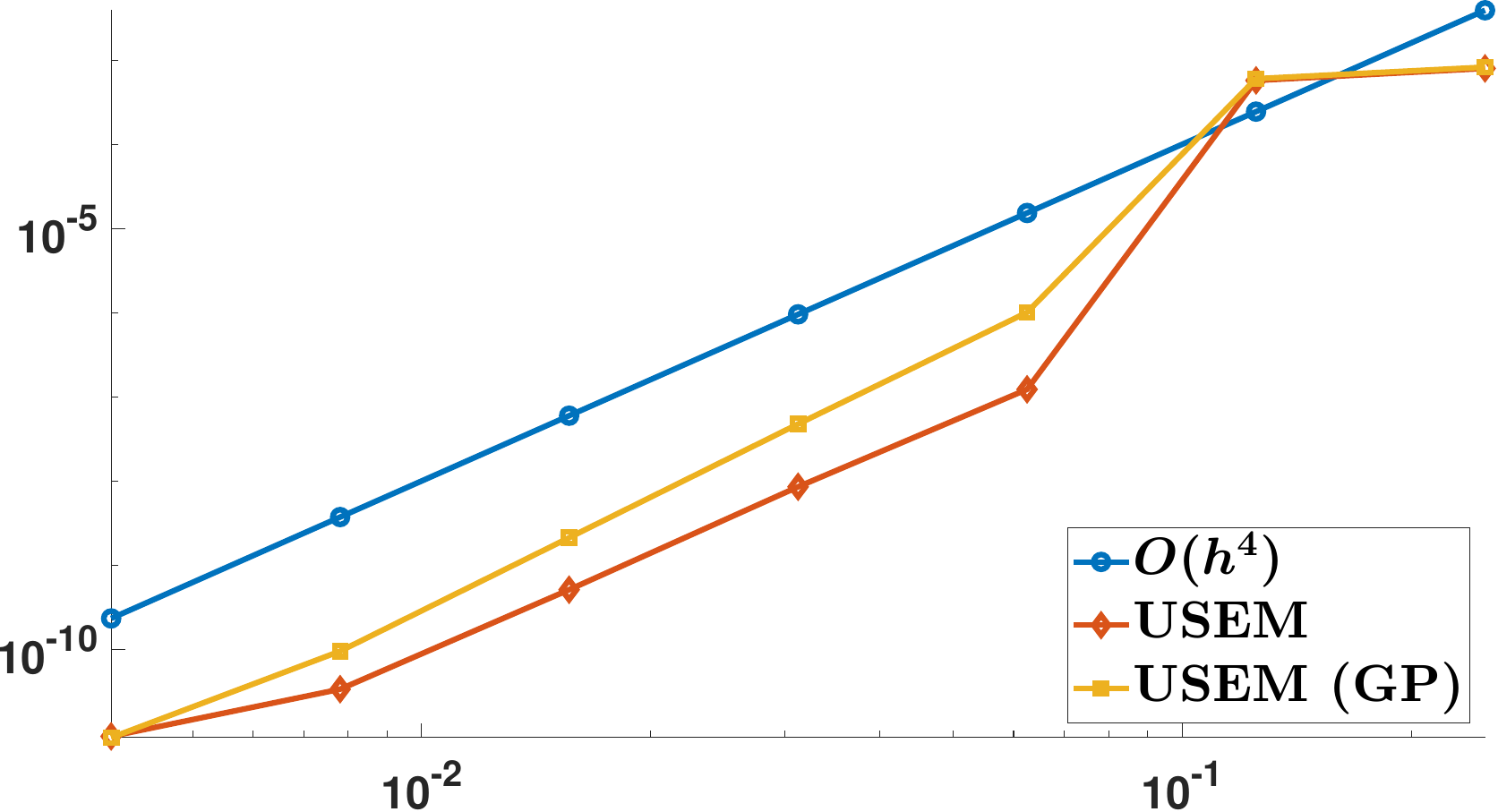}}
   \subcaptionbox{\label{fig:flower_h1_p3}}
   {\includegraphics[width=0.31\textwidth]{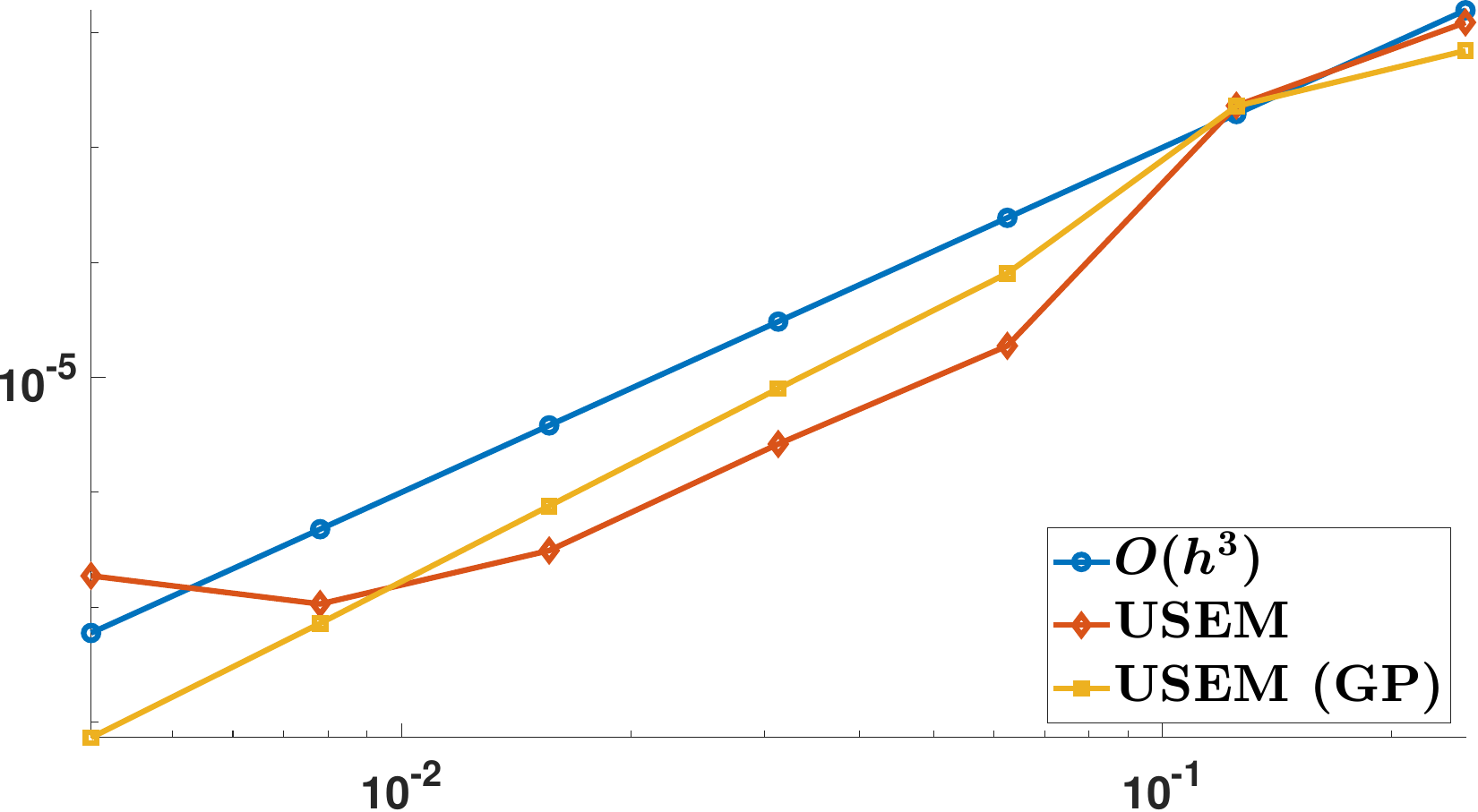}}
      \subcaptionbox{\label{fig:flower_cond_p3}}
   {\includegraphics[width=0.31\textwidth]{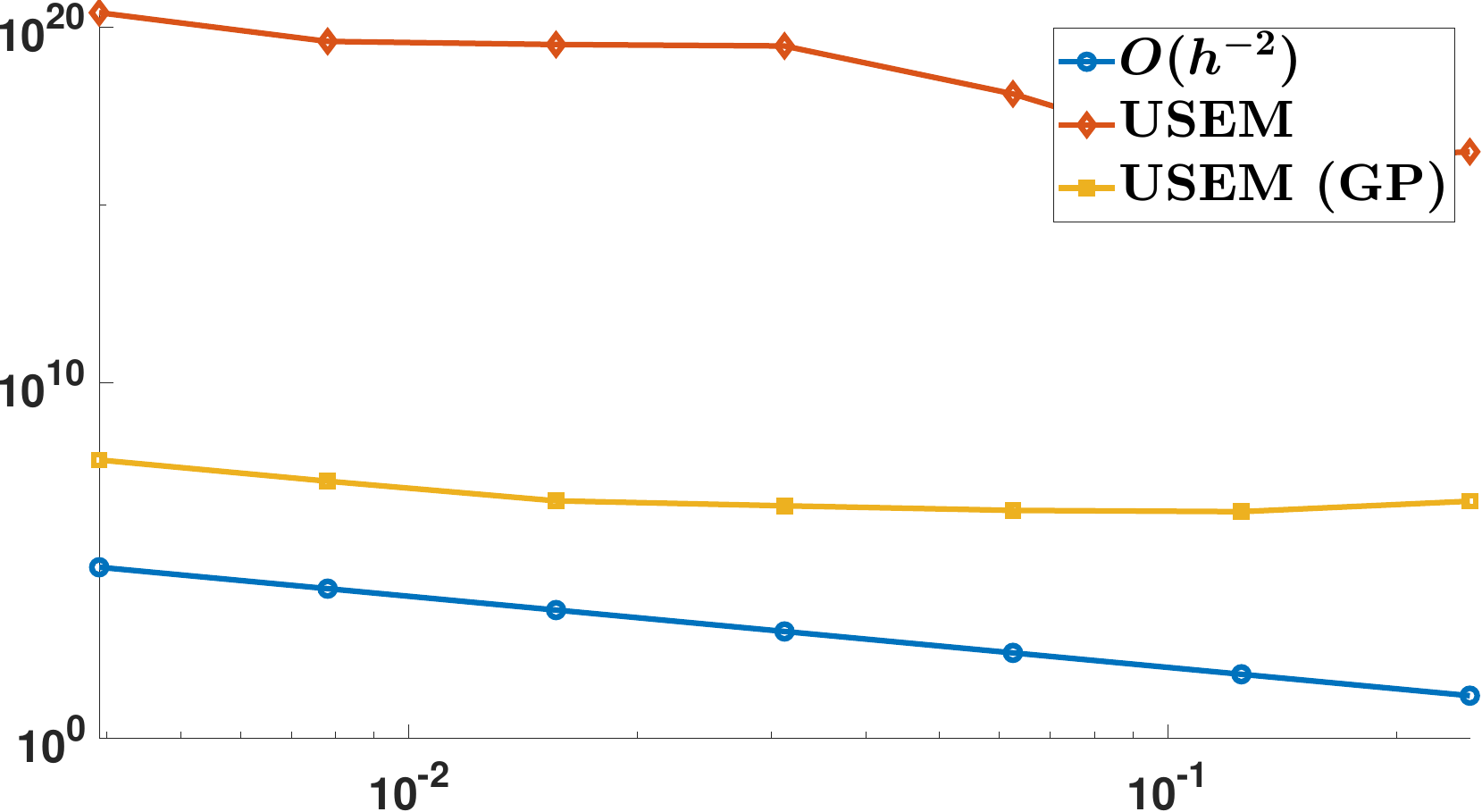}}
   \caption{Log-log $h$-convergence plots for flower shape interface problem with $\alpha_- = 1$ and $\alpha_+ = 10$. Orange line: non-stabilized solution. Yellow Line: stabilized solution. (a) $L^2$-error; (b) $H^1$-error; (c) Condition number. Stabilized with $\gamma_A = 0.001$.}
   \label{fig:flower_hconvergence}
\end{figure}

  \begin{figure}[!h]
   \centering
  \subcaptionbox{\label{fig:flower_l2_h16}}
   {\includegraphics[width=0.31\textwidth]{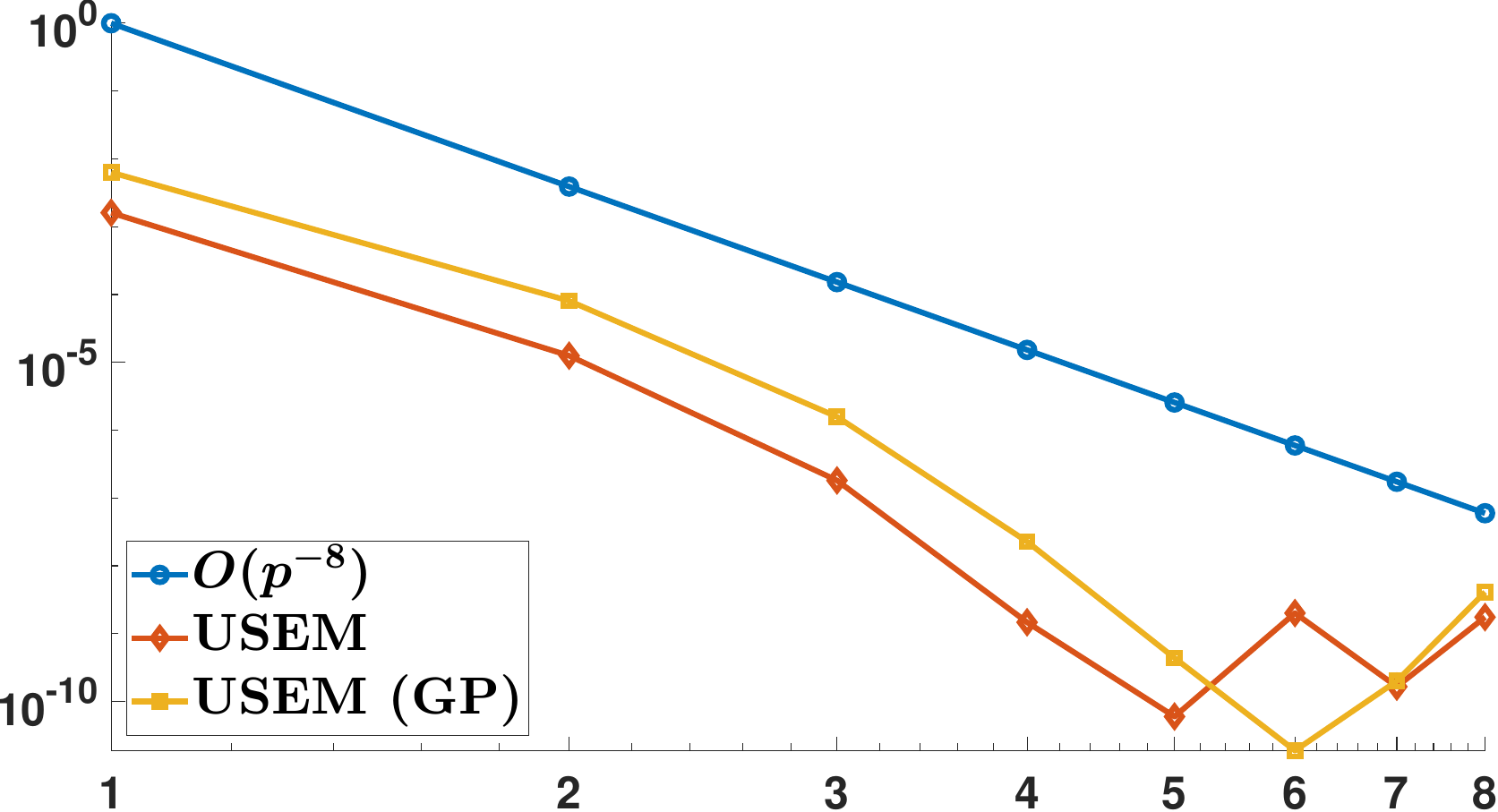}}
   \subcaptionbox{\label{fig:flower_h1_h16}}
   {\includegraphics[width=0.31\textwidth]{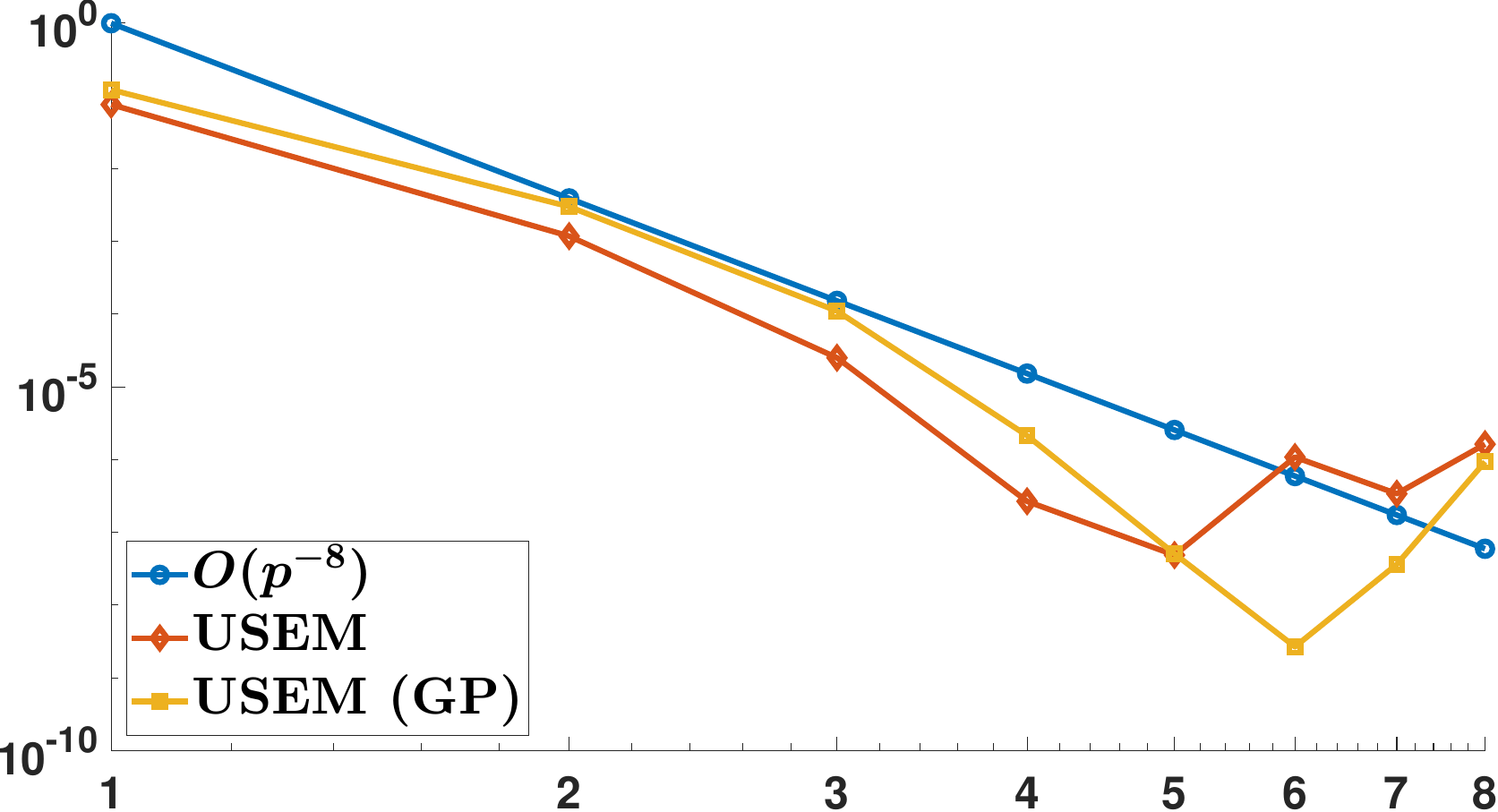}}
      \subcaptionbox{\label{fig:flower_cond_h16}}
   {\includegraphics[width=0.31\textwidth]{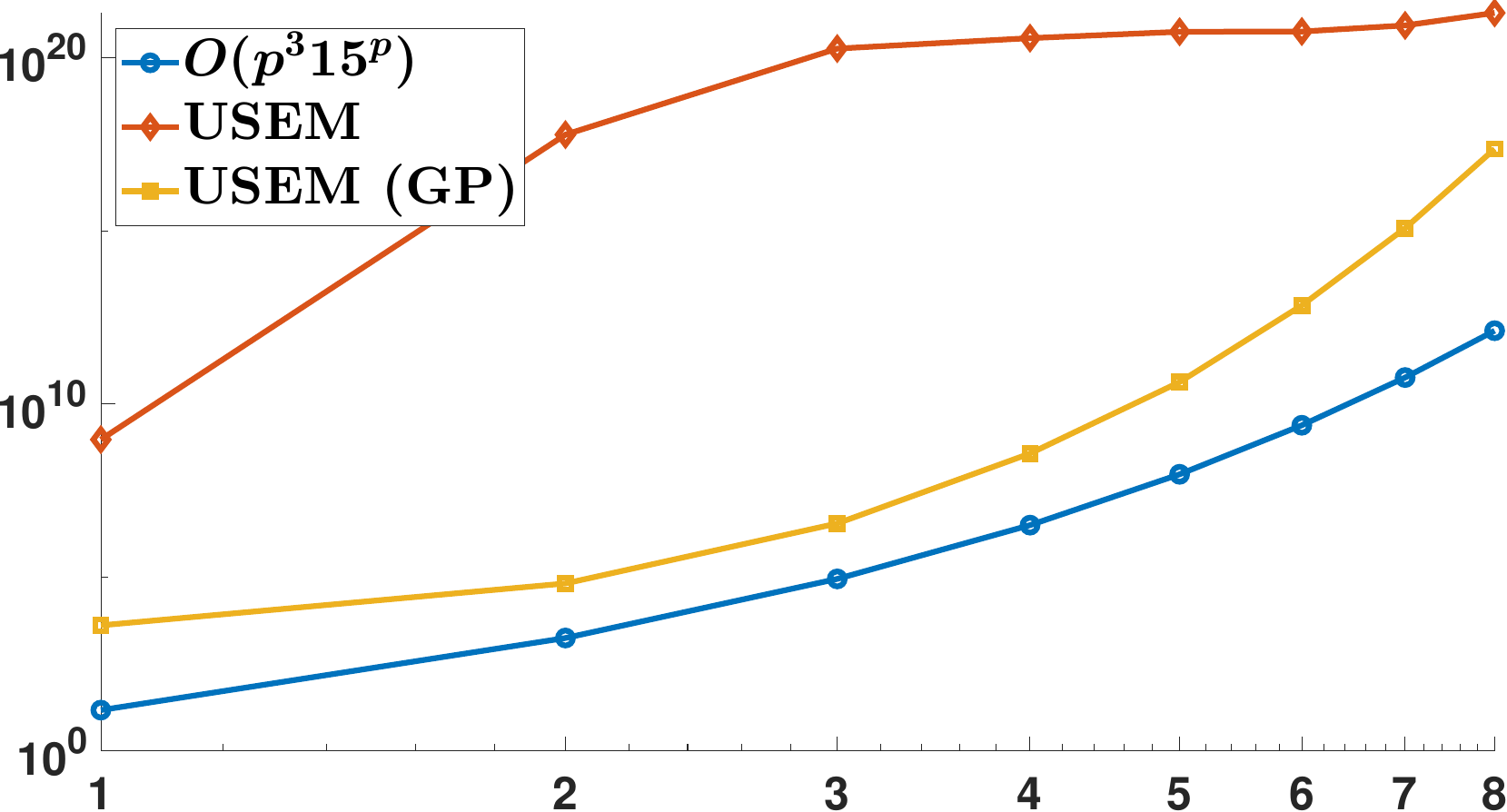}}
   \caption{Semi-log $p$-convergence plots for flower shape interface problem with $\alpha_- = 1$ and $\alpha_+ = 10$. Orange line: non-stabilized solution. Yellow Line: stabilized solution. (a) $L^2$-error; (b) $H^1$-error; (c) Condition number. Stabilized with $\gamma_A = 0.005$.}
   \label{fig:flower_pconvergence}
\end{figure}

The $h$-convergence results presented in Figure \ref{fig:flower_hconvergence} align with our theoretical expectations. In these simulations, we continued to use third-order ($p=3$) basis functions in our  calculations. It is evident that the $L^2$ convergence rate of the standard unfitted spectral element method deteriorates as we reach the last data point (the smallest $h$ value). In contrast, the ghost penalty (GP) version exhibits superior stability, maintaining a more consistent convergence rate.
This trend is also observed in the $H^1$ error. As we refine the mesh, the standard unfitted spectral element method becomes increasingly susceptible to weaker numerical convergence rates, a characteristic not shared by its ghost penalty counterpart. In summary, the GP version consistently outperforms the standard unfitted spectral element method, and in some cases, even surpasses fitted finite element methods in terms of stiffness condition number growth. Notably, the yellow line in the right plot exhibits a less steep slope compared to the reference $O(h^{-2})$ blue line.

Given the more intricate geometry, we employed a finer grid, as compared to the previous circular test, consisting of $29 \times 29$ elements. This choice allows us to better observe the desired $p$-convergence rates. In Figure \ref{fig:flower_pconvergence}, spectral convergence becomes quite evident. Both our unfitted spectral element method (USEM) lines, with and without ghost penalty (GP) stabilization, curve away from the reference polynomial rate (blue) before eventually succumbing to the limitations imposed by numerical precision.
It's important to note that the ghost penalty terms involve high-order derivatives, which can contribute significantly to round-off errors at higher degrees. In this case, we can observe that spectral behavior is preserved from $L^2$ to $H^1$ norms. Although our USEM curves eventually become inconsistent, the spectral tendency persists for two more degrees of $p$ with GP stabilization, thereby numerically validating its effectiveness beyond the realm of $h$-convergence.
Lastly, it's worth noting that ghost penalty demonstrates a remarkable improvement in the progression of the stiffness condition number. However, we also observe a growth rate that exceeds the exponential rate.

\subsection{Interface eigenvalue problems}
In this example, we investigate the interface eigenvalue problem \eqref{equ:eigenprob}. Our computational domain is $\Omega = (0,\pi) \times (0,\pi)$, which contains a circular interface centered at $(\frac{\pi}{2},\frac{\pi}{2})$ with a radius of $\frac{\pi}{4}$, effectively splitting $\Omega$ into subdomains $\Omega_-$ and $\Omega_+$. Unlike previous cases, we now have to consider not only the stiffness matrix but also the mass matrix, adding another potential source of ill-conditioning for numerical solvers.

We do not have theoretical eigenvalues to compare with. We quantify the error in the following manner,
\begin{equation*}
    \varepsilon_k(\lambda_i) = \frac{|\lambda_{i,k+1}-\lambda_{i,k}|}{|\lambda_{i,k}|},
\end{equation*}
where $\lambda_{i,k}$ represents the eigenvalue $\lambda_i$ at the $k^{th}$ iteration. Iteration here can refer to either mesh refinement in $h$-convergence or polynomial degree in $p$-convergence.
Our plots show results for (\ref{equ:eigenprob}) five smallest eigenvalues $\{\lambda_i\}_{i=0}^4$, for both non and fully stabilized setups.

The ghost penalty's contribution is thus even more significant in this scenario, as it effectively addresses instability issues stemming from both stiffness and mass matrices. We only present the numerical results for the specific case of $\alpha_+/\alpha_- = 1000$ to demonstrate the efficacy of the ghost penalty method.

  \begin{figure}[!h]
   \centering
  \subcaptionbox{\label{fig:eigen_l2_p3}}
   {\includegraphics[width=0.47\textwidth]{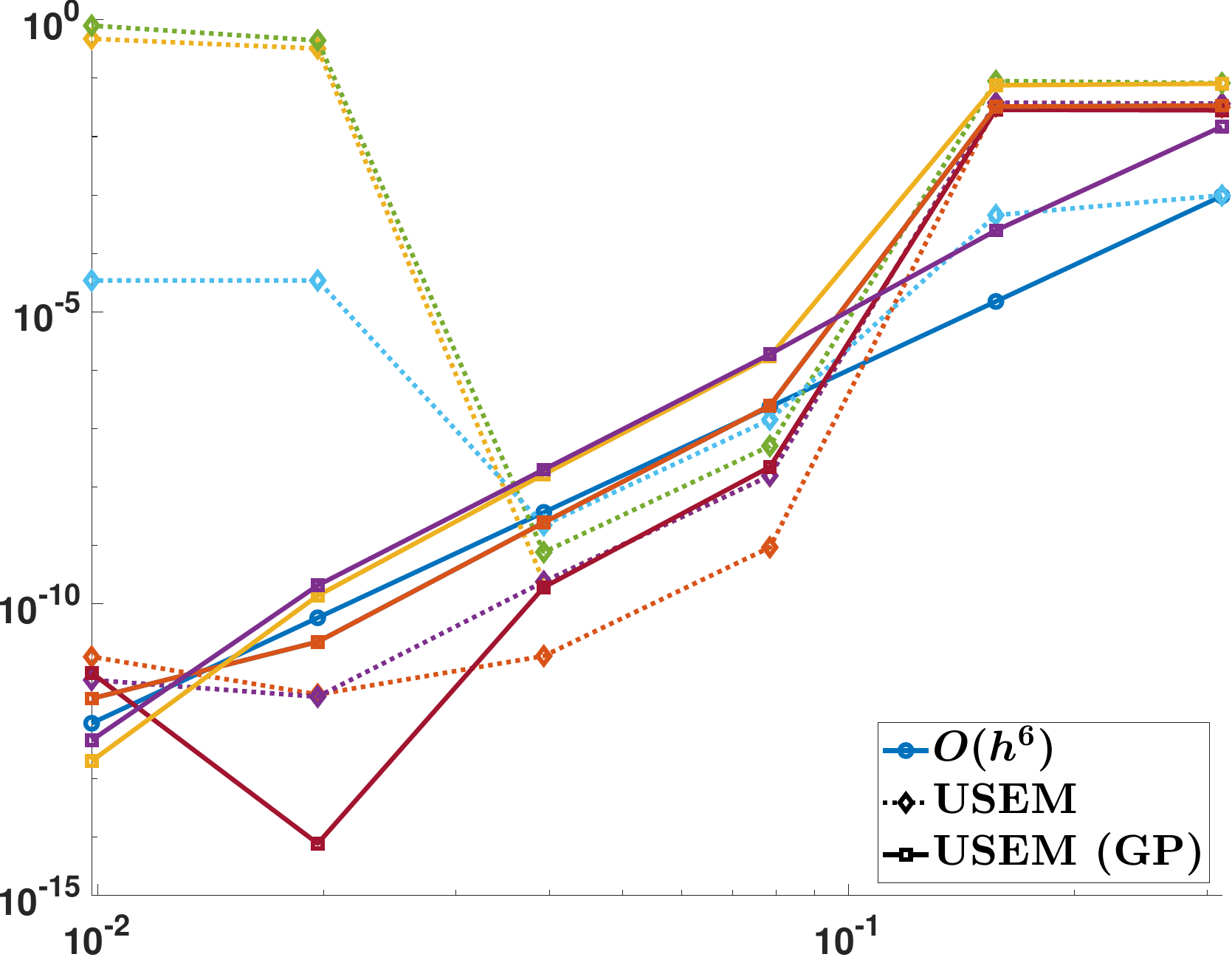}}
      \subcaptionbox{\label{fig:eigen_cond_p3}}
   {\includegraphics[width=0.47\textwidth]{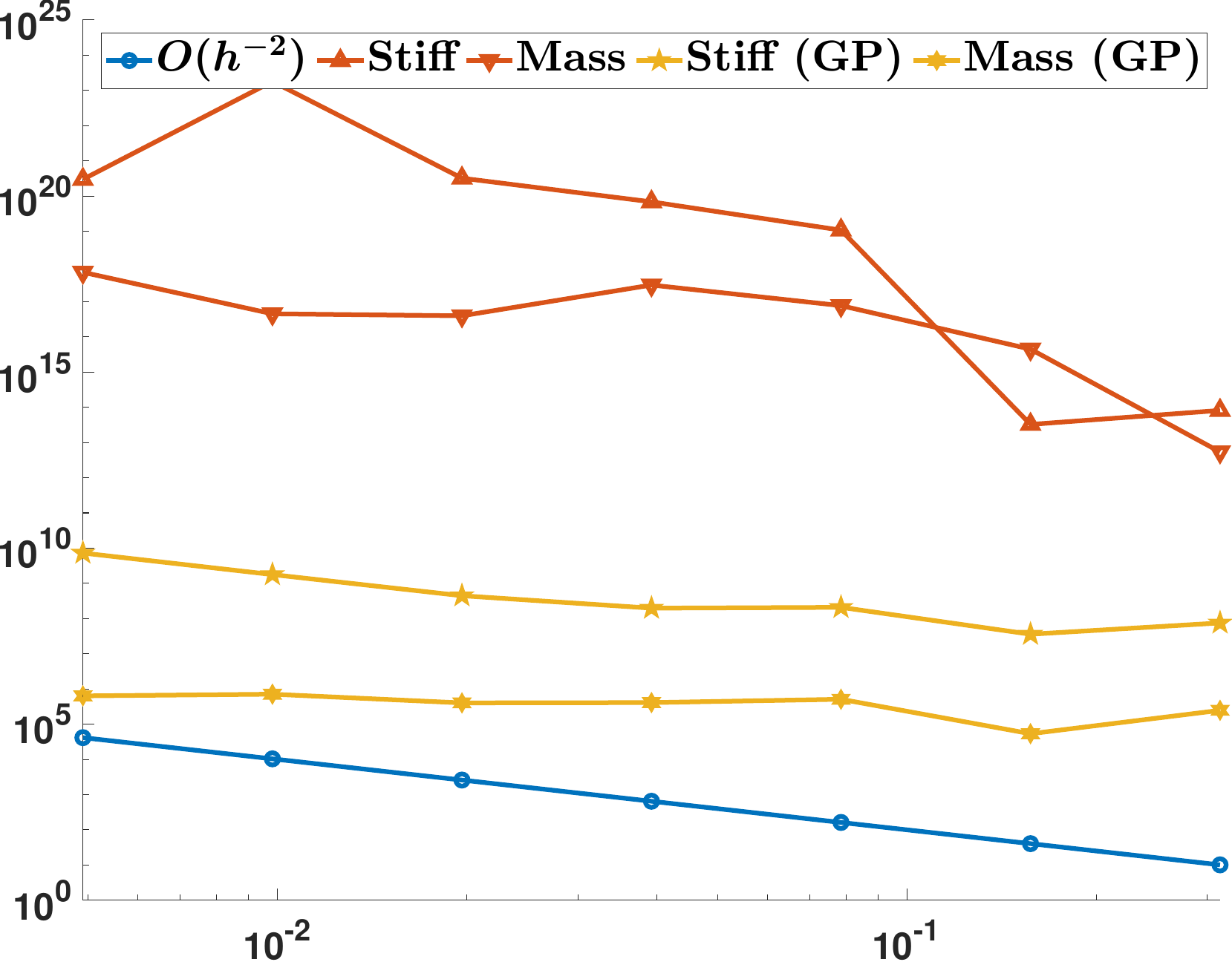}}
   \caption{Log-log $h$-convergence plots for circle interface eigenvalue problem with $\alpha_- = 1$ and $\alpha_+ = 1000$: (a) Eigenvalues $\{\lambda_i\}_{i=0}^4$. Dotted lines with diamond markers: non-stabilized problem. Solid lines with square markers: fully-stabilized problem. (b) Condition number. Orange solid lines with triangle markers: non-stabilized problem. Yellow solid lines with asterisk markers: fully-stabilized problem.}
   \label{fig:eigen_hconvergence}
\end{figure}

  \begin{figure}[!h]
   \centering
  \subcaptionbox{\label{fig:eigen_l2_h16}}
   {\includegraphics[width=0.47\textwidth]{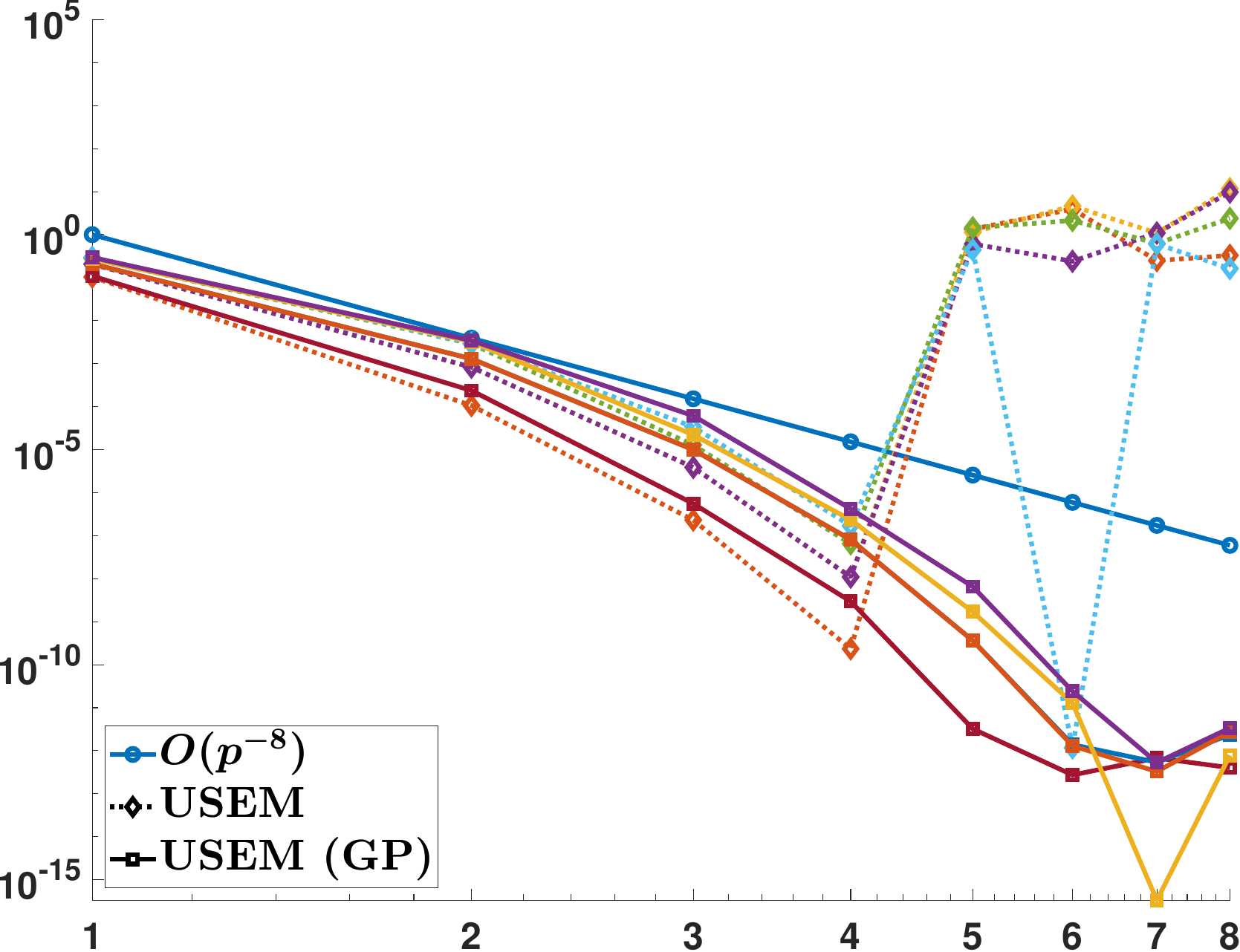}}
      \subcaptionbox{\label{fig:eigen_cond_h16}}
   {\includegraphics[width=0.47\textwidth]{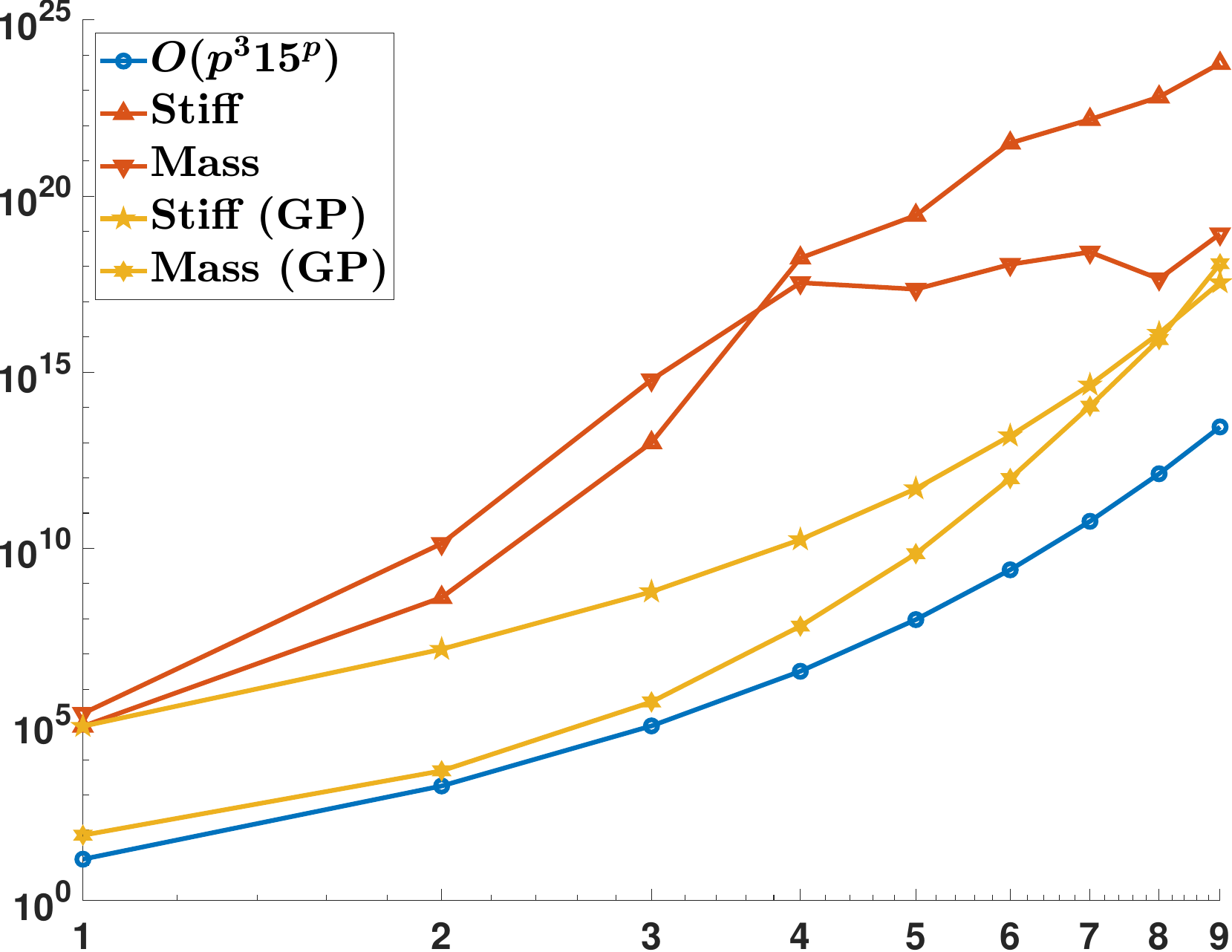}}
   \caption{Semi-log $p$-convergence plots for circle interface eigenvalue problem with $\alpha_- = 1$ and $\alpha_+ = 1000$. (a) Eigenvalues $\{\lambda_i\}_{i=0}^4$. Dotted lines with diamond markers: non-stabilized problem. Solid lines with square markers: fully-stabilized problem. (b) Condition number. Orange solid lines with triangle markers: non-stabilized problem. Yellow solid lines with asterisk markers: fully-stabilized problem.}
   \label{fig:eigen_pconvergence}
\end{figure}

In the context of $h$-convergence (Figure \ref{fig:eigen_hconvergence}), we set the stabilizing coefficients to $\gamma_A = 4.1$ and $\gamma_M = 0.002$ respectively. Aside from the initial step, we observe our expected rate of convergence,
$$ O(h^{2\min(p+1,m)-2}) = O(h^{2(p+1)-2}) = O(h^{2p}) = O(h^6),$$
for the USEM with ghost penalty Stabilization. This convergence rate aligns with our theoretical analysis and demonstrates the effectiveness of the ghost penalty method.
In contrast, standard USEM struggles to maintain the theoretical convergence trajectory and becomes unstable and unreliable as we refine the mesh and reduce the mesh-to-interface intersections. Additionally, there is a noticeable difference in the progression of matrix condition numbers between the two methods. From the outset, the non-ghost penalty version is already in or close to the ill-conditioned range, while the ghost penalty matrices, both stiffness and mass, exhibit convergence rates of approximately $O(h^{-2})$ and $O(1)$ respectively, which are in line with their fitted counterparts.

For $p$-convergence in Figure \ref{fig:eigen_pconvergence}, we set $\gamma_A = 0.1$ and $\gamma_M = 0.05$. In this case, both versions of USEM initially exhibit spectral convergence, with their curves deviating significantly from the reference polynomial line (blue). This behavior is notably different from the Poisson problem with the same domains $\Omega$ and $\Gamma$.
However, the non-stabilized method breaks down after reaching degree $p=4$, with the error curve diverging and exhibiting random oscillations. On the other hand, GP USEM continues to descend in a spectral fashion, approaching machine precision (double arithmetic) and remaining stable even at higher degrees.
Regarding the condition numbers, the main observation is that ghost penalty delays the inevitable and rapid increase in condition numbers, which is one of the factors contributing to the improved numerical stability of the GP algorithm. Interestingly, the stabilized mass matrix's condition number eventually reaches and appears to overtake that of the stiffness matrix. This observation is somewhat surprising since the mass bilinear form for is much simpler than for stiffness.

\section{Conclusion}
\label{sec:con}
In this paper, we have introduced a novel spectral element method on unfitted meshes. Our proposed method combines the spectral accuracy of spectral element methods with the geometric flexibility of unfitted Nitsche's methods. To enhance the robustness of our approach, especially for small cut elements, we have introduced a tailored ghost penalty term with a polynomial degree of $p$. We have demonstrated the optimal $hp$ convergence properties of our proposed methods.
We have conducted extensive numerical experiments to validate our theoretical results. These numerical examples not only confirm the $h$-convergence observed in existing literature but also showcase the $p$-convergence of our method.


\section*{Acknowledgment}
H.G. acknowledges partial support from the Andrew Sisson Fund, Dyason Fellowship, and the Faculty Science Researcher Development Grant at the University of Melbourne. X.Y. acknowledges partial support from the NSF grant DMS-2109116. H.G. would like to express gratitude to Prof. Jiayu Han from Guizhou Normal University for valuable discussions on eigenvalue approximation.

%

\bibliographystyle{siamplain}
\bibliography{References}
\end{document}